\theoremstyle{definition}
\newtheorem{thm}{Theorem}[section]
\newtheorem{prop}[thm]{Proposition}
\newtheorem{obs}[thm]{Observation}
\newtheorem{lem}[thm]{Lemma}
\newtheorem{cor}[thm]{Corollary}
\newtheorem{defi}[thm]{Definition}
\newtheorem{conj}[thm]{Conjecture}
\newtheorem{que}[thm]{Question}
\title{Constructions of graphs with any possible two-fold automorphism and automorphism groups}
\author{Bartłomiej Bychawski}
\date{}
\begin{document}

\maketitle

\begin{abstract}
    It is known that the canonical double cover of any connected nonbipartite graph have an automorphism group of the form $\text{H} \rtimes \mathbb{Z}_2$, where $\text{H}$ is the set of automorphism which preserve bipartite parts. We construct connected nonbipartite and vertex determining graphs whose canonical double covers have auromorphisms group isomorphic to any semisimple product of $\mathbb{Z}_2$ with any abstract group $\text{H}$. Later we show, that the canonical double cover of any asymmetric graph have abelian automorphisms group of odd order. The above construction provides an example of asymmetric graph for any such group. By modifying the aforementioned construction we obtain graphs which have any possible number and type of graphs with isomorphic double covers. 
\end{abstract}


\section*{History of the problem}
    In this paper we study \text{stability} of graphs. 
    We call a graph $\Gamma$ \textit{stable}, if and only if $\text{Aut}(\Gamma \times \text{K}_2) \cong \text{Aut}(\Gamma) \times \mathbb{Z}_2$, and \textit{unstable} otherwise. $K_2$ stands for a graph with two vertices and an edge between them. By $\Gamma_1 \times \Gamma_2$, where $\Gamma_1 = (V_1,E_1)$ and $\Gamma_1 = (V_2,E_2)$ we denote the graph
    $$
    \Gamma_1 \times \Gamma_2 = (V_1 \times V_2,\{((v_1,v_2),(w_1,w_2)) \mid (v_1,w_1) \in E_1 \text{ and } (v_2,w_2) \in E_2 \}).
    $$
    We will refer to $\Gamma \times \text{K}_2$ as the \textit{canonical double cover} of $\Gamma$ and we denote it by $\text{B}\Gamma$.
    \newline
    One can give simple sufficient criteria for graph $\Gamma$ to be unstable. Namely, any disconnected, bipartite or not reduced graph is known to be unstable. We call a graph \textit{reduced} if any two distinct vertices have different neighbourhoods. Reduced graphs are sometimes refered to as \textit{unworthy} or \textit{vertex determining} graphs. Unstable graph which is connected, nonbipartite and reduced is usually refered to as \textit{nontrivially unstable}. Main focus of studying stability is therefore to obtain information about nontrivially unstable graphs.
    \newline
    It is well known that for any connected, nonbipartite and reduced graph $\Gamma$ we have $\text{Aut}(\text{B}\Gamma) = \text{Aut}_0(\text{B}\Gamma) \rtimes \mathbb{Z}_2$, where $\text{Aut}_0(\text{B}\Gamma)$ is the subgroup of automorphisms preserving bipartite parts of $\text{B}\Gamma$.  It was early noticed, ( cf. \cite{zbMATH01675824}) that $\Gamma$ is stable, if and only if $\mathbb{Z}_2$ lies in the center of $\text{Aut}_0(\text{B}\Gamma) \rtimes \mathbb{Z}_2$. This points out, that it is sufficient to study group $\text{Aut}_0(B\Gamma)$ and the action of conjugacion by nontrivial element of $\mathbb{Z}_2$. This idea lead to the discovery of \textit{two-fold automorphisms}. Those can be defined for any graph $\Gamma=(V,E)$ as a set
    $$
    \{(\pi_1,\pi_2) \mid \pi_1,\pi_2 \in \text{Sym}(V) \text { and } \forall_{v,w \in V} \text{ } (v,w) \in E \Leftrightarrow (\pi_1(v),\pi_2(w)) \in E \},
    $$
    which forms a group with coordinetewise composition of functions. $\text{Sym}(V)$ stands for the set of all permutations of set $V$. It is usually referred to with symbol $\text{Aut}^{\text{TF}}(\Gamma)$. Connection of this group to the problem of stability, can be easily explained by isomorphism $\text{Aut}^{\text{TF}}(\Gamma) \cong \text{Aut}_0(\text{B}\Gamma)$.
    \newline

    Two-fold automorphisms were at first studied in \cite{zbMATH03378944} and \cite{zbMATH03353315} as autotopy of graphs in different context, and then in \cite{zbMATH05944472} and \cite{zbMATH06471221} in context of stability. In \cite{zbMATH05944472} somewhat unintuitive example of nontrivially unstable asymmetric graph was given. By asymmetric we denote graphs without nontrivial automorphisms. In \cite{zbMATH07349648} it was demonstrated that this example is in fact the smallest such graph and an infinite family of asymmetric unstable graphs was constructed. In \cite{zbMATH06719978} using ideas similar to two-fold automorphisms it was discovered that a graph $\Gamma$ can be reconstructed from a family of open vertex neighbourhoods of all vertices if and only if there are no other graphs with a canonical double cover isomorphic to $\text{B}\Gamma$.

    \section*{Content summary}
    In this paper our main subject of study are finite undirected looples graphs. Those are usually refer to as simple graphs.
    \newline
    
    In Sections \ref{SECTION: Preliminaries} and \ref{SECTION: Functions gamma and alpha of TF-Projections on reduced graphs} we show that for any reduced graph $\Gamma$, a group of permutations of $V$ which maps any neighbourhood from $\Gamma$ to some neighbourhood, is isomorphic to $\text{Aut}^{\text{TF}}(\Gamma)$ (Proposition \ref{PROP: Aut^TF jest izo z Aut^pi}). We will refer to this group as \textit{two-fold projections} and use symbol $\text{Aut}^{\pi}(\Gamma)$ to describe it. We also recall and prove some general connections between $\text{Aut}(\Gamma)$ and $\text{Aut}^{\text{TF}}(\Gamma)$. In Sections \ref{SECTION: TF-Projections as a subgroup of Automophisms of Gamma^2} and \ref{SECTION: Properties of Topological Automorphisms} we present methods for studying two-fold automorphisms, especially we introduce the idea of the topological automorphisms of a graph which is totally different from previous approaches.
    \newline
    
    In Section \ref{SECTION: Construction of (H,sigma) conected nonbipartite graphs} we present main construction of this paper, which proves that for any abstract group $\text{H} \rtimes \mathbb{Z}_2$ there exists a connected, reduced and nonbipartite graph $\Gamma$ such that $\text{Aut}(\text{B}\Gamma) \cong \text{H} \rtimes \mathbb{Z}_2$ and $\text{Aut}^{\text{TF}}(\Gamma) \cong \text{H}$ (Theorem \ref{konstrukcja (H,sigma) grafow}). Section \ref{SECTION: Clasification and construction of alpha-automorphic graphs} contains the study two-fold automorphisms of asymmetric graphs. It leads to a conclusion, that Theorem \ref{konstrukcja (H,sigma) grafow} is not true if we would ask for $\Gamma$ such that $\text{Aut}^{\pi}(\Gamma)$ acts transitively on vertices of $\Gamma$.
    \newline
    
    Sections \ref{SECTION: TF-isomorphisms of graphs via adjacency matrices} and \ref{SECTION: Notes on nonisomorphic graphs which are TF-isomorphic to a given graph} leads us to equations (Corollary \ref{COR: rownania dotyczace TF-iso grafow}) which holds for any reduced $\Gamma$
    $$
    |\text{Ant}_0(\Gamma)| = \sum_{\Gamma' \cong^{\text{TF}} \Gamma} \text{Inst}(\Gamma') \quad \text{and} \quad \frac{|\text{Ant}_0(\Gamma)|}{|\text{Aut}^{\pi}(\Gamma)|} = \sum_{\Gamma' \cong^{\text{TF}} \Gamma} \frac{1}{|\text{Aut}(\Gamma')|},
    $$
    where $\text{Ant}_0(\Gamma)$ is the set of permutations $\pi_0$ of $V$ such that for any $v,w \in V$ $(v,w) \in E \Leftrightarrow (\pi_0(v),{\pi_0}^{-1}(w)) \in E$ and $(v,\pi_0(v)) \notin E$ holds. Moreover, relation $\cong^{\text{TF}}$ which appears in above sums is equivalent to having isomorphic canonical double covers and sums are taken over all unlabeled graphs with that property.
    \newline
    
    In Section \ref{SECTION: Construction of (H,sigma)-graphs with any number and type of TF-isomorphic graphs} we modify construction of $\Gamma$ from Theorem \ref{konstrukcja (H,sigma) grafow} in such a way that we can choose which elements from $\text{Ant}(\Gamma)$ produce loopless graphs with canonical double covers isomorphic to $\text{B}\Gamma$. In Section \ref{SECTION: Further notes on nonisomorphic graphs which are TF-isomorphic to a given graph} we give some additional remarks about number and type of graphs with isomorphic canonical double covers. We then construct an infinite family of stable GRRs (Cayley graphs with $\text{Aut}(\text{Cay}(G,S)) \cong G$) which have almost the same number of such graphs as vertices. Moreover, all those graphs are stable, which is a rare case.

\section{Preliminaries} \label{SECTION: Preliminaries}


Before we will present main results of this paper we have to establish some basic definitions and conventions.

\begin{defi}
    We call an ordered pair $\Gamma = (V,E)$ a (simple and finite) graph if and only if $V$ is a finite set and $E$ is is a set of unordered pairs of different elements form $V$.
\end{defi}

\begin{defi}
    Let $\Gamma$ be a graph, and $v \in V$ be a vertex. Then we define neighbourhood of $v$ as follows
    $$
    N(v) = \{ w \mid (v,w) \in E \}.
    $$
    In situations, where one element can be a vertex of more than one graph, we write $N_{\Gamma}(v)$ for clarity. We denote the set of all neighbourhoods of a given graph $\Gamma$ by $N(\Gamma)$.
    $$
    N(\Gamma) = \{ N(v) \mid v \in V \}.
    $$
\end{defi}

Those definitions will be useful, as we will describe groups of two-fold automorphisms in terms of neighbourhoods. Now, just for completeness of this section we will restate definition of a reduced graph.

\begin{defi}
    Let $\Gamma$ be a graph. We call $\Gamma$ reduced (also known in literature as "twin free" or "worthy") if the following condition holds
    $$
    N(v) = N(w) \quad \Longrightarrow \quad v = w.
    $$
\end{defi}

Let us now define two groups related to two-fold automorphisms, namely two-fold projections and topological automorphisms, along with proving some basic results about them.

\begin{defi}
    Let $\Gamma$ be reduced graph. We call a bijection $\pi_0$ from $V$ onto itself a \textit{two-fold projection}, if and only if the following holds
    $$
    \forall_{v \in V} \quad \exists_{w \in V} \quad \pi_{0}(N(v)) = N(w).
    $$
    Therefore it is easy to see that the set of all two-fold projections of $\Gamma$ form a group with composition of functions. We denote this group by $\text{Aut}^{\pi}(\Gamma)$.
\end{defi}

\begin{obs}
    For every reduced graph $\Gamma$ we have $\text{Aut}(\Gamma) \leq \text{Aut}^{\pi}(\Gamma)$.
\end{obs}
\begin{proof}
    It is enough to notice that if $\pi_0 \in \text{Aut}(\Gamma)$, then $\pi_{0}(N(v)) = N(\pi_0(v))$.
\end{proof}


\begin{prop} \label{PROP: Aut^TF jest izo z Aut^pi}
    Let $\Gamma = (V,E)$ be a reduced graph. Then $\text{Aut}^{\text{TF}}(\Gamma) \cong \text{Aut}^{\pi}(\Gamma)$ and 
    $$
    \text{Aut}^{\pi}(\Gamma) = \{ \pi_1 \in \text{Sym}(V) \mid (\pi_1,\pi_2) \in \text{Aut}^{\text{TF}}(\Gamma)\}. 
    $$
\end{prop}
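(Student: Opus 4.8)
The plan is to realize both assertions at once by exhibiting the first-coordinate projection $p_1 \colon \text{Aut}^{\text{TF}}(\Gamma) \to \text{Sym}(V)$, $(\pi_1,\pi_2) \mapsto \pi_1$, as a group isomorphism onto $\text{Aut}^{\pi}(\Gamma)$. Since composition in $\text{Aut}^{\text{TF}}(\Gamma)$ is coordinatewise, $p_1$ is automatically a homomorphism, so everything reduces to identifying its image and proving injectivity.

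First I would translate the defining condition of $\text{Aut}^{\text{TF}}(\Gamma)$ into the language of neighbourhoods. Fixing $w$ and letting $v$ range, and using that $E$ is symmetric (so $(v,w)\in E \Leftrightarrow v \in N(w)$), the equivalence $(v,w)\in E \Leftrightarrow (\pi_1(v),\pi_2(w))\in E$ becomes $v \in N(w) \Leftrightarrow \pi_1(v) \in N(\pi_2(w))$; since $\pi_1$ is a bijection this says exactly
$$
\pi_1(N(w)) = N(\pi_2(w)) \qquad \text{for all } w \in V.
$$
This single identity is equivalent to membership in $\text{Aut}^{\text{TF}}(\Gamma)$, and it shows immediately that whenever $(\pi_1,\pi_2)\in\text{Aut}^{\text{TF}}(\Gamma)$ the permutation $\pi_1$ sends every neighbourhood to a neighbourhood, i.e.\ $\pi_1 \in \text{Aut}^{\pi}(\Gamma)$. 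This yields the inclusion $\{\pi_1 \mid (\pi_1,\pi_2)\in \text{Aut}^{\text{TF}}(\Gamma)\} \subseteq \text{Aut}^{\pi}(\Gamma)$ and that $p_1$ lands in $\text{Aut}^{\pi}(\Gamma)$.

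For the reverse inclusion (surjectivity of $p_1$) I would start from an arbitrary $\pi_0 \in \text{Aut}^{\pi}(\Gamma)$ and reconstruct a partner $\sigma$. By definition, for each $w$ there is a vertex with $\pi_0(N(w)) = N(\sigma(w))$, and here reducedness enters: since distinct vertices have distinct neighbourhoods, $\sigma(w)$ is uniquely determined, so $\sigma \colon V \to V$ is a well-defined function. A short argument shows $\sigma$ is injective ($\sigma(w_1)=\sigma(w_2)$ forces $N(w_1)=N(w_2)$ after applying $\pi_0^{-1}$, hence $w_1=w_2$), and injectivity on the finite set $V$ upgrades to bijectivity. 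Plugging $\pi_0(N(w))=N(\sigma(w))$ back into the displayed reformulation verifies $(\pi_0,\sigma)\in\text{Aut}^{\text{TF}}(\Gamma)$, so $\pi_0 = p_1(\pi_0,\sigma)$ lies in the image.

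Finally, injectivity of $p_1$ follows from the same uniqueness: if $(\pi_1,\pi_2)$ and $(\pi_1,\pi_2')$ both lie in $\text{Aut}^{\text{TF}}(\Gamma)$, the displayed identity gives $N(\pi_2(w)) = \pi_1(N(w)) = N(\pi_2'(w))$ for every $w$, and reducedness forces $\pi_2 = \pi_2'$. Hence $p_1$ is a bijective homomorphism onto $\text{Aut}^{\pi}(\Gamma)$, proving both the isomorphism and the explicit description of $\text{Aut}^{\pi}(\Gamma)$. The one genuinely load-bearing point throughout is reducedness: it is what makes the second coordinate a well-defined function of the first, turning the otherwise merely set-theoretic correspondence between neighbourhoods into an honest permutation; finiteness of $V$ is used only to pass from injectivity of $\sigma$ to surjectivity.
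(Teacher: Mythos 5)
Your proof is correct, and it takes a genuinely different route from the paper. The paper argues through the canonical double cover: it views $\text{B}\Gamma$ with $V \times \{0\}$ as the vertices of $\Gamma$ and $V \times \{1\}$ as the neighbourhoods, identifies $\text{Aut}^{\pi}(\Gamma)$ with the automorphisms of $\text{B}\Gamma$ fixing $V \times \{0\}$ setwise (reducedness killing the pointwise stabilizer), and separately observes that pairs $(\pi_1,\pi_2) \in \text{Aut}^{\text{TF}}(\Gamma)$ are exactly the part-preserving automorphisms of $\text{B}\Gamma$. You instead work entirely inside $\Gamma$: you isolate the identity $\pi_1(N(w)) = N(\pi_2(w))$ for all $w$ as an equivalent form of the two-fold automorphism condition, and then run the projection $p_1$ through the standard image/kernel analysis, with reducedness giving both the well-definedness of the partner $\sigma$ and the injectivity of $p_1$, and finiteness upgrading injectivity of $\sigma$ to bijectivity. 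Your version is more elementary and more explicit --- in particular, your displayed identity is precisely the fact the paper later relies on when defining $\gamma$ and proving $\pi_2 = \gamma(\pi_1)$, so your proof makes that mechanism visible at the outset rather than leaving it implicit. What the paper's route buys is the dictionary with $\text{B}\Gamma$ itself, which is the lens used throughout the rest of the paper (e.g.\ $\text{Aut}^{\text{TF}}(\Gamma) \cong \text{Aut}_0(\text{B}\Gamma)$ and the semidirect product structure of $\text{Aut}(\text{B}\Gamma)$), so the paper's proof doubles as setup for later sections while yours is self-contained. Both hinge on the same load-bearing use of reducedness, which you correctly identify.
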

\begin{proof}
    Let us consider a graph $\text{B}\Gamma$, where vertices $V \times \{ 0 \}$ correspond to vertices of $\Gamma$ and $V \times \{ 1\}$ correspond to neighbourhoods. Clearly $((u,0),(v,1)) \in E$ if and only if $u \in N(v)$. Therefore every automorphism of $\text{B}\Gamma$ which fixes vertex set $V \times \{ 0 \}$ corresponds to exactly one element of $\text{Aut}^{\pi}(\Gamma)$ just by correspondence $(v,0) \mapsto v$. This is a bijection, because the kernel of this homomorphism is a set of all automorphisms of $\text{B}\Gamma$ which fix $V \times \{ 0 \}$ pointwise, and it consists only of the identity, as $\Gamma$ is reduced.
    \newline

    On the other hand $(\pi_1,\pi_2) \in \text{Aut}^{\text{TF}}$ if and only if a fuction given by a formula
    $$
    (u,0) \mapsto (\pi_1(u),0) \quad (v,1) \mapsto (\pi_2(v),1), 
    $$
    is an automorphism of $\text{B}\Gamma$. Now the proof is complete.
\end{proof}
Theorem above shows, that the difference between $\text{Aut}^{\text{TF}}(\Gamma)$ and $\text{Aut}^{\pi}(\Gamma)$ is strictly formal, however it will be easier for us to formulate certain results. For example it is easier to talk about orbits of $\text{Aut}^{\pi}(\Gamma)$, when we will mean the subset of vertices of $\Gamma$, rather than subset of $V \times V$.
\newline
    We will now present some basic definitions and one observation which will connect ideas of topological automorphisms and two-fold projections. This link will be further explored in Section \ref{SECTION: Properties of Topological Automorphisms}.
\begin{defi}
    Let $\Gamma$ be a graph, and $v \in V$ be a vertex. We define a ball of radius 1 with center $v$ as follows
    $$
    B(v,1) = \{ w \mid (v,w) \in E \} \sqcup \{ v \}.
    $$
     In situations, where one element can be a vertex of more than one graph we write $B_{\Gamma}(v,1)$ for clarity.
\end{defi}

\begin{defi}
    Let $\Gamma$ be graph. We call a bijection $\pi$ from $V$ onto itself a topological automorphism, if and only if the following holds
    $$
    \forall_{v \in V} \quad \exists_{w \in V} \quad \pi(B(v,1)) = B(w,1).
    $$
    Similarly as before the set of all topological automorphism of $\Gamma$ form a group with composition of functions. We denote it by $\text{Aut}^{\tau}(\Gamma)$.
\end{defi}

\begin{defi}
    Let $\Gamma = (V,E)$ be a graph. By $\Gamma^{C}$ we denote the \textit{complement of a graph} $\Gamma$, that is a graph with set of vertices equal to $V$ and edges between $v$ and $w$ if and only if $(v,w) \notin E$.
\end{defi}

\begin{obs}
    For a reduced graph $\Gamma$ we have $\text{Aut}^{\tau}({\Gamma}^C) = \text{Aut}^{\pi}(\Gamma)$.
\end{obs}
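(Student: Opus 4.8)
The plan is to reduce the whole statement to a single set identity relating balls of radius $1$ in $\Gamma^C$ to neighbourhoods in $\Gamma$, and then to exploit the fact that a bijection of $V$ commutes with complementation inside $V$. First I would compute, for a fixed vertex $v$, the ball of radius $1$ around $v$ in the complement. Since $\Gamma$ is loopless, $v \notin N_\Gamma(v)$, so the vertices non-adjacent to $v$ in $\Gamma$ together with $v$ itself are precisely the vertices $w$ with $(v,w) \notin E$. Hence
$$
B_{\Gamma^C}(v,1) = N_{\Gamma^C}(v) \cup \{v\} = \left(V \setminus (N_\Gamma(v) \cup \{v\})\right) \cup \{v\} = V \setminus N_\Gamma(v).
$$
This is the one computation on which the whole argument rests.

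Next I would observe that any bijection $\pi$ of $V$ commutes with taking complements in $V$, i.e. $\pi(V \setminus S) = V \setminus \pi(S)$ for every $S \subseteq V$. Applying this with $S = N_\Gamma(v)$ and using the identity above, the condition $\pi(B_{\Gamma^C}(v,1)) = B_{\Gamma^C}(w,1)$ becomes $V \setminus \pi(N_\Gamma(v)) = V \setminus N_\Gamma(w)$, which holds if and only if
$$
\pi(N_\Gamma(v)) = N_\Gamma(w).
$$
Quantifying this over $\forall_{v}\,\exists_{w}$ then shows that $\pi$ satisfies the defining condition of $\text{Aut}^{\tau}(\Gamma^C)$ exactly when it satisfies the defining condition of $\text{Aut}^{\pi}(\Gamma)$. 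Since both groups are subgroups of $\text{Sym}(V)$ carrying the same composition, the two sets of bijections coincide as groups, which is the claim.

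The argument is essentially a bookkeeping identity, so I do not expect a serious obstacle; the one place to be careful is the handling of the centre $v$ of the ball. One must check that the loopless convention makes $v$ land on the correct side of the complement, so that the inclusion $v \in B_{\Gamma^C}(v,1)$ corresponds exactly to $v \in V \setminus N_\Gamma(v)$, and that the existential witness $w$ is literally the same vertex on both sides of the equivalence, so that no reindexing of the quantifiers is needed. Reducedness of $\Gamma$ is used only insofar as it makes $\text{Aut}^{\pi}(\Gamma)$ well defined, as its definition was stated for reduced graphs; the underlying set-theoretic equality itself does not require it.
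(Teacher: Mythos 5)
Your proof is correct and follows essentially the same route as the paper's: the paper likewise rests on the identity $B_{\Gamma^C}(v,1) = V \setminus N_{\Gamma}(v)$ and the observation that a bijection of $V$ permutes the family $N(\Gamma)$ exactly when it permutes the family of complements of its members, which is your complementation-commutes step. Your write-up is just a more explicit unpacking (including the careful placement of the centre $v$ and the remark that reducedness is needed only for $\text{Aut}^{\pi}(\Gamma)$ to be defined), so there is nothing to add.
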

\begin{proof}
    Firstly note that
    $
    V \text{ } \backslash \text{ } N_{\Gamma}(v) = B_{{\Gamma}^C}(v,1)
    $.
    Secondly, any permutation $\pi_0: V \longrightarrow V$ gives a permutation of a family $N(\Gamma) = \{ N(v) \mid v \in V \}$ exactly when it permutes family constructed from complements of all sets from $N(\Gamma)$.
\end{proof}

\section{Functions $\gamma$ and $\alpha$ of TF-Projections on reduced graphs} \label{SECTION: Functions gamma and alpha of TF-Projections on reduced graphs}

As previously mentioned, for any connected reduced and nonbipartite graph $\Gamma$ it is true that $\text{Aut}(\text{B}\Gamma) \cong \text{Aut}^{\pi}(\Gamma) \rtimes \mathbb{Z}_2$. We start this Section by defining the action of conjugation by nontrivial element of $\mathbb{Z}_2$ on $\text{Aut}^{\pi}(\Gamma)$ in the language of neighbourhoods.

\begin{defi} \label{definicja funkcji gamma}
    Let $\Gamma$ be a reduced graph. Lets define $s$ to be the bijection between the set $V$ and the set $N(\Gamma)$ of neigbourhoods of $\Gamma$ given by $V \ni v \mapsto N(v) \in N(\Gamma)$.
\newline
    Define $\psi$ as a function from $\text{Aut}^{\pi}(\Gamma)$ to $\text{Sym}(N(\Gamma))$ given by the formula 
    $$
    \forall_{\pi \in \text{Aut}^{\pi}(\Gamma)} \quad \forall_{v \in V} \quad \psi( \pi ) (N(v)) = \pi(N(v)). 
    $$
    Let $\gamma$ denote the function from $\text{Aut}^{\pi}(\Gamma)$ to $\text{Sym}(V)$ given by the formula
    $$
     \forall_{\pi \in \text{Aut}^{\pi}(\Gamma)} \quad \gamma (\pi) = s^{-1} \circ \psi(\pi) \circ s.
    $$
    In situations, where it is not clear, which graph are we referring too we denote this function by $\gamma_{\Gamma}$.
\end{defi}

Function $\gamma$ is in fact a conjugation by a nontrivial element of $\mathbb{Z}_2$, which we will now prove. For any function $f: X \rightarrow X$ we will denote $\text{Fix}(f) = \{ x\in X \mid f(x) = x \}$. 
    
\begin{thm} \label{THM: kluczowe własności gammy}
 Let $\Gamma$ be a reduced graph. Then $\gamma(\text{Aut}^{\pi}(\Gamma)) = \text{Aut}^{\pi}(\Gamma)$ and $\gamma \in \text{Aut}(\text{Aut}^{\pi}(\Gamma))$. Furthermore $\gamma^2 \equiv Id$ and $\text{Fix}(\gamma) = \text{Aut}(\Gamma)$, and the following equivalence holds
 \begin{align} \label{ROWNANIE: gamma a krawedzie}
     \forall_{\pi \in \text{Aut}^{\pi}(\Gamma)} \quad \forall_{v,w \in V} \quad (v,w) \in E \quad \Longleftrightarrow \quad (\pi(v),\gamma(\pi)(w)) \in E.
 \end{align}
\end{thm}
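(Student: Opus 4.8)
The plan is to reduce every assertion to the single identity that, for each $\pi\in\text{Aut}^\pi(\Gamma)$ and each $v\in V$,
$$
N(\gamma(\pi)(v)) = \pi(N(v)),
$$
which I will call $(\ast)$. First I would unwind the definitions: since $\pi$ is a two-fold projection, $\pi(N(v))\in N(\Gamma)$, so $s^{-1}$ may legitimately be applied to it and $\gamma(\pi)(v)=s^{-1}(\psi(\pi)(s(v)))=s^{-1}(\pi(N(v)))$ is well defined; moreover $\gamma(\pi)\in\text{Sym}(V)$ because $\psi(\pi)$ is a bijection of the finite set $N(\Gamma)$ (injectivity of $\psi(\pi)$ uses that $\pi$ is a bijection of $V$ and that $\Gamma$ is reduced). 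Applying $s$ to both sides of the previous display yields $(\ast)$. The equivalence~\eqref{ROWNANIE: gamma a krawedzie} is then immediate: using that $\Gamma$ is undirected and that $\pi$ is a bijection,
$$
(\pi(v),\gamma(\pi)(w))\in E \iff \pi(v)\in N(\gamma(\pi)(w)) = \pi(N(w)) \iff v\in N(w) \iff (v,w)\in E.
$$

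The key observation is that equivalence~\eqref{ROWNANIE: gamma a krawedzie} says precisely that $(\pi,\gamma(\pi))\in\text{Aut}^{\text{TF}}(\Gamma)$. I would then record two easy facts about $\text{Aut}^{\text{TF}}(\Gamma)$. Uniqueness: if $(\pi_1,\pi_2)$ and $(\pi_1,\pi_2')$ both lie in $\text{Aut}^{\text{TF}}(\Gamma)$, then $N(\pi_2(w))=N(\pi_2'(w))$ for every $w$ (because $\pi_1$ is onto), so $\pi_2=\pi_2'$ by reducedness; hence $\gamma(\pi)$ is the unique TF-partner of $\pi$. Symmetry: since $E$ consists of unordered pairs, $(\pi_1,\pi_2)\in\text{Aut}^{\text{TF}}(\Gamma)\iff(\pi_2,\pi_1)\in\text{Aut}^{\text{TF}}(\Gamma)$. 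Combining these, $(\gamma(\pi),\pi)\in\text{Aut}^{\text{TF}}(\Gamma)$, which by Proposition~\ref{PROP: Aut^TF jest izo z Aut^pi} shows $\gamma(\pi)\in\text{Aut}^\pi(\Gamma)$, and by uniqueness forces $\gamma(\gamma(\pi))=\pi$. Thus $\gamma$ maps $\text{Aut}^\pi(\Gamma)$ into itself and $\gamma^2\equiv Id$, so $\gamma$ is a bijection of $\text{Aut}^\pi(\Gamma)$ onto itself, giving $\gamma(\text{Aut}^\pi(\Gamma))=\text{Aut}^\pi(\Gamma)$.

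It remains to check that $\gamma$ is a group homomorphism and to identify its fixed points. For the homomorphism property I would use that $\text{Aut}^{\text{TF}}(\Gamma)$ is closed under coordinatewise composition: from $(\pi,\gamma(\pi)),(\rho,\gamma(\rho))\in\text{Aut}^{\text{TF}}(\Gamma)$ we obtain $(\pi\circ\rho,\gamma(\pi)\circ\gamma(\rho))\in\text{Aut}^{\text{TF}}(\Gamma)$, whence $\gamma(\pi\circ\rho)=\gamma(\pi)\circ\gamma(\rho)$ by uniqueness; together with the bijectivity already established this yields $\gamma\in\text{Aut}(\text{Aut}^\pi(\Gamma))$. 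Finally, $\pi\in\text{Fix}(\gamma)$ iff $\gamma(\pi)=\pi$ iff, again by uniqueness, $(\pi,\pi)\in\text{Aut}^{\text{TF}}(\Gamma)$, and $(\pi,\pi)\in\text{Aut}^{\text{TF}}(\Gamma)$ is literally the condition $(v,w)\in E\iff(\pi(v),\pi(w))\in E$, i.e.\ $\pi\in\text{Aut}(\Gamma)$; hence $\text{Fix}(\gamma)=\text{Aut}(\Gamma)$.

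The only genuinely delicate point is the passage from $\gamma$ landing a priori merely in $\text{Sym}(V)$ to its returning into $\text{Aut}^\pi(\Gamma)$, since the definition only supplies a map $\gamma:\text{Aut}^\pi(\Gamma)\to\text{Sym}(V)$; this is exactly where the symmetry of undirected adjacency is invoked, and without it one could neither iterate $\gamma$ nor even speak of $\gamma^2$. Everything else is bookkeeping with $(\ast)$ and the group structure of $\text{Aut}^{\text{TF}}(\Gamma)$.
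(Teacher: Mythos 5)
Your proof is correct, and it reaches the theorem by a more self-contained route than the paper's. The paper's argument runs through the canonical double cover: it first identifies, via the correspondence of Proposition \ref{PROP: Aut^TF jest izo z Aut^pi}, the unique TF-partner $\pi_2$ of a given $\pi_1\in\text{Aut}^{\pi}(\Gamma)$ with $\gamma(\pi_1)$, reading $s$ as the map $(v,1)\mapsto N_{\Gamma}(v)\times\{0\}$ inside $\text{B}\Gamma$, and only at the very end deduces the edge equivalence \eqref{ROWNANIE: gamma a krawedzie}. You invert this order: you extract the identity $N(\gamma(\pi)(v))=\pi(N(v))$ directly from Definition \ref{definicja funkcji gamma} by unwinding $s$ and $\psi$, prove \eqref{ROWNANIE: gamma a krawedzie} first, and then run the same algebraic skeleton that the paper uses --- uniqueness of the TF-partner from reducedness, symmetry of TF-pairs because edges are unordered, and closure of $\text{Aut}^{\text{TF}}(\Gamma)$ under coordinatewise composition --- to obtain $\gamma^2\equiv Id$, bijectivity, the homomorphism property, and $\text{Fix}(\gamma)=\text{Aut}(\Gamma)$. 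The two arguments are logically equivalent, but yours replaces the paper's somewhat informal double-cover identification (``$\pi_2$ is basically the permutation of neighbourhoods given by $\pi_1$'') with an explicit computation, and it isolates the one genuinely delicate step, which you rightly flag: a priori $\gamma$ lands only in $\text{Sym}(V)$, and it is the symmetry of undirected adjacency, giving $(\gamma(\pi),\pi)\in\text{Aut}^{\text{TF}}(\Gamma)$ and hence $\gamma(\pi)\in\text{Aut}^{\pi}(\Gamma)$ by Proposition \ref{PROP: Aut^TF jest izo z Aut^pi}, that permits iterating $\gamma$ at all. What the paper's route buys is continuity with the $\text{B}\Gamma$ viewpoint exploited throughout the rest of the article; what yours buys is independence from that picture and a sharper accounting of exactly where reducedness and undirectedness enter. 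One cosmetic remark: injectivity of $\psi(\pi)$ on $N(\Gamma)$ already follows from $\pi$ being a bijection of $V$; reducedness is needed only to make $s$ invertible, so your parenthetical slightly misattributes the roles, with no effect on correctness.
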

\begin{proof}
    If $\pi_1 \in \text{Aut}^{\pi}(\Gamma)$, then there exists exactly one $\pi_2$ such that $(\pi_1,\pi_2) \in \text{Aut}^{\text{TF}}(\Gamma)$. Uniqueness of $\pi_2$ follows from the fact that $\Gamma$ is reduced. Let consider a graph $\text{B}\Gamma$. Since for every $v \in V$ vertex $(v,1)$ in $\text{B}\Gamma$ is connected to $N_{\Gamma}(v) \times \{ 0\}$, permutation $\pi_2$ is basically the permutation of neighbourhoods given by $\pi_1$. This is because function $s$ corresponds to a map $(v,1) \mapsto N_{\Gamma}(v) \times \{ 0\}$, and then following the Definition \ref{definicja funkcji gamma} with the fact that $\Gamma$ is reduced finally gives us $\pi_2 = \gamma(\pi_1)$.
    \newline
    $(\pi_1,\pi_2) \in \text{Aut}^{\text{TF}}(\Gamma)$ implies  $(\pi_2,\pi_1) \in \text{Aut}^{\text{TF}}(\Gamma)$ by symmetry in definition of $\text{TF}(\Gamma)$. We therefore conclude that $\gamma(\text{Aut}^{\pi}(\Gamma)) = \text{Aut}^{\pi}(\Gamma)$, so $\gamma$ is a bijection. It is a homomorphism because if $(\pi_1,\pi_2),(\tau_1,\tau_2) \in \text{Aut}^{\text{TF}}(\Gamma)$, then also $(\pi_1 \circ \tau_1,\pi_2 \circ \tau_2) \in \text{Aut}^{\text{TF}}(\Gamma)$, which gives us $\gamma(\pi_1) \circ \gamma(\tau_1) = \pi_2 \circ \tau_2 = \gamma(\pi_1 \circ \tau_1)$, and therefore $\gamma \in \text{Aut}(\text{Aut}^{\pi}(\Gamma))$. The fact that both $(\pi_1,\pi_2)$ and $(\pi_2,\pi_1)$ are elements of $\text{Aut}^{\text{TF}}(\Gamma)$ implies
    $\gamma(\gamma(\pi_1)) = \gamma(\pi_2) = \pi_1$, which proves $\gamma^2 \equiv Id$.
    \newline
    To prove the next part of this theorem, let us take one more look at $\text{B}\Gamma$. If $\pi_1 \in \text{Aut}(\Gamma)$, then $(\pi_1,\pi_1) \in \text{Aut}^{\text{TF}}(\Gamma)$, so $\gamma(\pi_1) = \pi_1$. If on the other hand $\gamma(\pi_1) = \pi_1$, then let $(u,v) \in E_{\Gamma}$. It means that $((u,0),(v,1)) \in E_{\text{B}\Gamma}$. Since $(\pi_1,\pi_1) \in \text{Aut}^{\text{TF}}(\Gamma)$, it gives us $((\pi_1(u),0),(\pi_1(v),1)) \in E_{\text{B}\Gamma}$, and from this we conclude that $(\pi_1(u),\pi_1(v)) \in E_{\Gamma}$. Therefore $\pi_1 \in \text{Aut}(\Gamma)$, and finally $\text{Fix}(\gamma) = \text{Aut}(\Gamma)$.
    \newline
    The equivalence (\ref{ROWNANIE: gamma a krawedzie}) follows from the fact that if $\pi \in \text{Aut}^{\pi}(\Gamma)$, then $(\pi,\gamma(\pi)) \in \text{Aut}^{\text{TF}}(\Gamma)$ as discussed above.   
\end{proof}

\begin{defi} \label{definicja funkcji alpha}
    Let $\Gamma$ be a reduced graph. Let $\alpha$ denote the function from $\text{Aut}^{\pi}(\Gamma)$ to $\text{Aut}^{\pi}(\Gamma)$ given by the formula
    $$
    \alpha(\pi) = \pi^{-1} \circ \gamma(\pi).
    $$
    In situations, where it is not clear, which graph are we referring too we denote this function by $\alpha_{\Gamma}$.
\end{defi}

\begin{defi}
     Let $\Gamma$ be any graph. By \textit{instability index} of $\Gamma$ we understand the number 
     $$
     \text{Inst}(\Gamma) = \frac{|\text{Aut}(B\Gamma)|}{2\cdot |\text{Aut}(\Gamma)|}.
     $$
\end{defi}

Theorem bellow establishes a link between $\alpha$ and instability index which will become important in Section \ref{SECTION: Notes on nonisomorphic graphs which are TF-isomorphic to a given graph}.

\begin{thm} \label{THM: alpha jest stala na prawostronnych warstwach Aut}
    Let $\Gamma$ be a reduced graph and $\pi_1,\pi_2 \in \text{Aut}^{\pi}(\Gamma)$ be any TF-projections. Then 
    $$
    \alpha(\pi_1) = \alpha(\pi_2) \quad \Longleftrightarrow \quad \text{Aut}(\Gamma) \pi_1 = \text{Aut}(\Gamma) \pi_2,
    $$
    and consequently $|\text{Im}(\alpha)| = \frac{|\text{Aut}^{\pi}(\Gamma)|}{|\text{Aut}(\Gamma)|}$. Moreover if $\Gamma$ is connected and nonbipartite, $|\text{Im}(\alpha)|$ is equal to instability index if $\Gamma$.
\end{thm}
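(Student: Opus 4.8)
The plan is to treat the entire statement as group theory resting on Theorem \ref{THM: kluczowe własności gammy}, which supplies the three facts I will use: $\gamma$ is an automorphism of $\text{Aut}^{\pi}(\Gamma)$, it is an involution ($\gamma^2 \equiv Id$), and its fixed-point set is exactly $\text{Aut}(\Gamma)$. No further graph-theoretic input is needed for the first two assertions; they follow purely from these properties of $\gamma$ together with $\text{Aut}(\Gamma) \leq \text{Aut}^{\pi}(\Gamma)$.

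For the equivalence I would expand $\alpha(\pi) = \pi^{-1}\gamma(\pi)$ and use that $\gamma$ is a homomorphism to obtain, for any $\pi_1,\pi_2 \in \text{Aut}^{\pi}(\Gamma)$,
$$\alpha(\pi_1) = \alpha(\pi_2) \iff \pi_1^{-1}\gamma(\pi_1) = \pi_2^{-1}\gamma(\pi_2) \iff \gamma(\pi_1\pi_2^{-1}) = \pi_1\pi_2^{-1},$$
where the last rewriting collects the $\pi_i$ on one side and invokes $\gamma(\pi_1)\gamma(\pi_2)^{-1} = \gamma(\pi_1\pi_2^{-1})$. The right-hand condition says precisely that $\pi_1\pi_2^{-1} \in \text{Fix}(\gamma) = \text{Aut}(\Gamma)$, which is in turn equivalent to $\text{Aut}(\Gamma)\pi_1 = \text{Aut}(\Gamma)\pi_2$. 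To be safe about orientation I would also check it directly: for $h \in \text{Aut}(\Gamma)$ we have $\gamma(h) = h$, so $\alpha(h\pi) = \pi^{-1}h^{-1}\gamma(h)\gamma(\pi) = \pi^{-1}\gamma(\pi) = \alpha(\pi)$, confirming that $\alpha$ is constant exactly on the right coset $\text{Aut}(\Gamma)\pi$.

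The counting assertion is then immediate: by the equivalence, $\alpha$ factors through the set of right cosets $\text{Aut}(\Gamma)\backslash\text{Aut}^{\pi}(\Gamma)$ as an injection, so $|\text{Im}(\alpha)|$ equals the number of those cosets, namely $|\text{Aut}^{\pi}(\Gamma)|/|\text{Aut}(\Gamma)|$ by Lagrange. For the final sentence I would translate this into the instability index using the facts recalled in the introduction: for connected nonbipartite reduced $\Gamma$ one has $\text{Aut}(B\Gamma) = \text{Aut}_0(B\Gamma) \rtimes \mathbb{Z}_2$ with $\text{Aut}_0(B\Gamma) \cong \text{Aut}^{\text{TF}}(\Gamma) \cong \text{Aut}^{\pi}(\Gamma)$, hence $|\text{Aut}(B\Gamma)| = 2\,|\text{Aut}^{\pi}(\Gamma)|$. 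Substituting into the definition of $\text{Inst}(\Gamma)$ gives $\text{Inst}(\Gamma) = |\text{Aut}^{\pi}(\Gamma)|/|\text{Aut}(\Gamma)| = |\text{Im}(\alpha)|$.

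I do not expect a genuine obstacle here, since the real content is already packaged in Theorem \ref{THM: kluczowe własności gammy}. The only point that requires care is the bookkeeping of left versus right cosets in the equivalence; this is why I would justify the coset orientation by the explicit computation $\alpha(h\pi) = \alpha(\pi)$ above rather than relying solely on the abstract rewriting.
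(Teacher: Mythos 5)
Your proposal is correct and follows essentially the same route as the paper: the same cancellation argument reduces $\alpha(\pi_1)=\alpha(\pi_2)$ to $\pi_1\pi_2^{-1}\in\text{Fix}(\gamma)=\text{Aut}(\Gamma)$ (the paper writes $\pi_2=\tau\circ\pi_1$ and derives $\tau=\gamma(\tau)$, which is the same computation), and the counting then follows by Lagrange exactly as in the paper. For the final claim the paper argues directly that any automorphism of $\text{B}\Gamma$ fixes or swaps the parts and exhibits the swap $(v,0)\leftrightarrow(v,1)$, giving $|\text{Aut}(\text{B}\Gamma)|=2\,|\text{Aut}^{\pi}(\Gamma)|$ --- which is precisely the fact you import from the semidirect-product decomposition recalled in the introduction, so the content is identical.
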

\begin{proof}
    Let us start by taking $\tau \in \text{Aut}^{\pi}$ such that $\pi_2 = \tau \circ \pi_1$. Then
    $$
    \alpha(\pi_1) = \alpha(\pi_2) \quad \Longleftrightarrow \quad {\pi_1}^{-1} \gamma(\pi_1) = {\pi_1}^{-1} \tau^{-1} \gamma(\tau) \gamma(\pi_1) \quad  \Longleftrightarrow \quad e = \tau^{-1} \gamma(\tau) \quad \Longleftrightarrow 
    $$
    $$
    \Longleftrightarrow \quad \tau = \gamma(\tau) \quad \Longleftrightarrow \quad \tau \in \text{Aut}(\Gamma) \quad \Longleftrightarrow \quad \text{Aut}(\Gamma) \pi_1 = \text{Aut}(\Gamma) \pi_2.
    $$
    This implies, that every right coset of $\text{Aut}(\Gamma)$ gives a distinct value under a function $\alpha$, so in fact $|\text{Im}(\alpha)| 
 = \frac{|\text{Aut}^{\pi}(\Gamma)|}{|\text{Aut}(\Gamma)|}$.
    If $\Gamma$ is conected and nonbipartite, then either $V \times \{ 0 \}$ is fixed setwise, or its image is $V \times \{ 1 \}$, and there exist an automorphism of $\text{B}\Gamma$ of the form $(v,0) \longleftrightarrow (v,1)$, hence the last part.
\end{proof}

If a graph $\Gamma$ is nontrivially unstable, then there exists a nontrivial element in $\text{Im}(\alpha)$. For that reason it is good to know properties of elements from this set.

\begin{obs} \label{OBS wlasnosci alphy i elementow odwracanych przez gamme}
     Let $\Gamma$ be a reduced graph. Let $\tau_0 \in \text{Aut}^{\pi}(\Gamma)$, $\pi_1 \in \text{Im}(\alpha)$ and $\pi_2,\pi_3 \in \text{Aut}^{\pi}(\Gamma)$ be such that $\gamma(\pi_2) = \pi_2^{-1}$ and $\gamma(\pi_3) = \pi_3^{-1}$. Moreover, let $n \in \mathbb{Z}$ be any integer. Then: \newline
     (a) $\gamma(\pi_1) = \pi_1^{-1}$, \newline
     (b) $\gamma(\alpha(\tau_0)) = \alpha(\gamma(\tau_0))$, \newline
     (c) $\gamma(\pi_2 \pi_3 \pi_2) = (\pi_2 \pi_3 \pi_2)^{-1}$, \newline
     (d) $\pi_2 \pi_1 \pi_2 \in \text{Im}(\alpha)$, \newline
     (e) $\pi_1^n \in \text{Im}(\alpha)$.
\end{obs}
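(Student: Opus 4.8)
The plan is to dispatch each of the five parts by direct computation inside $\text{Aut}^{\pi}(\Gamma)$, using only that $\gamma$ is an involutory group automorphism (Theorem \ref{THM: kluczowe własności gammy}) and the defining formula $\alpha(\pi) = \pi^{-1}\gamma(\pi)$. The coset description of $\text{Im}(\alpha)$ from Theorem \ref{THM: alpha jest stala na prawostronnych warstwach Aut} supplies useful context, but the arguments are ultimately self-contained manipulations.

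For (a), I would write $\pi_1 = \alpha(\sigma) = \sigma^{-1}\gamma(\sigma)$ for some $\sigma$ and compute $\gamma(\pi_1) = \gamma(\sigma)^{-1}\gamma^2(\sigma) = \gamma(\sigma)^{-1}\sigma = (\sigma^{-1}\gamma(\sigma))^{-1} = \pi_1^{-1}$, invoking that $\gamma$ is multiplicative and that $\gamma^2 \equiv Id$. Part (b) is the same kind of manipulation: both $\gamma(\alpha(\tau_0))$ and $\alpha(\gamma(\tau_0))$ expand to $\gamma(\tau_0)^{-1}\tau_0$. Part (c) is immediate from $\gamma$ being a homomorphism together with the hypotheses $\gamma(\pi_i)=\pi_i^{-1}$, namely $\gamma(\pi_2\pi_3\pi_2) = \pi_2^{-1}\pi_3^{-1}\pi_2^{-1} = (\pi_2\pi_3\pi_2)^{-1}$.

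The two parts carrying real content are (d) and (e). For (d) the key is \emph{not} to argue abstractly that $\pi_2\pi_1\pi_2$ is $\gamma$-inverting, since by (a)–(c) that would only place it in the set $\{\pi : \gamma(\pi)=\pi^{-1}\}$, which is in general strictly larger than $\text{Im}(\alpha)$; instead I would exhibit an explicit $\alpha$-preimage. Writing $\pi_1 = \alpha(\sigma)$, I would verify that $\alpha(\sigma\pi_2^{-1}) = \pi_2\sigma^{-1}\gamma(\sigma)\gamma(\pi_2^{-1}) = \pi_2\pi_1\pi_2$, the decisive cancellation being $\gamma(\pi_2^{-1}) = \gamma(\pi_2)^{-1} = \pi_2$. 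For (e) the same warning applies: although $\gamma(\pi_1^n) = (\pi_1^n)^{-1}$ holds by (a), this does not by itself yield membership in $\text{Im}(\alpha)$. I would instead split on parity, noting first that each $\pi_1^k$ satisfies $\gamma(\pi_1^k) = (\pi_1^k)^{-1}$ by (a), so it is a legitimate choice for the hypothesis element in part (d). Even powers then arise as $\pi_1^{2k} = \pi_1^{k}\, e\, \pi_1^{k}$ with $e = \alpha(e)\in\text{Im}(\alpha)$, and odd powers as $\pi_1^{2k+1} = \pi_1^{k}\,\pi_1\,\pi_1^{k}$, both lying in $\text{Im}(\alpha)$ by (d); equivalently one checks the explicit identities $\alpha(\pi_1^{-k}) = \pi_1^{2k}$ and $\alpha(\sigma\pi_1^{-k}) = \pi_1^{2k+1}$.

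The main obstacle is exactly the tempting shortcut in (d) and (e) — ``$\gamma$ inverts it, hence it lies in $\text{Im}(\alpha)$'' — which is false because $\text{Im}(\alpha)$ is only a subset of the $\gamma$-inverted elements. The genuine work is producing the explicit $\alpha$-preimages, verifying that the intermediate powers $\pi_1^{k}$ may legitimately serve as the hypothesis element of (d), and carrying out the parity split in (e).
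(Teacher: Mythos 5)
Your proposal is correct, and parts (a)--(d) coincide with the paper's proof essentially verbatim: the same expansions $\gamma(\pi_1)=\gamma(\tau)^{-1}\gamma(\gamma(\tau))=\pi_1^{-1}$, the same two-sided expansion for (b), the homomorphism property for (c), and for (d) the identical explicit preimage $\alpha(\tau\pi_2^{-1})=\pi_2\pi_1\gamma(\pi_2)^{-1}=\pi_2\pi_1\pi_2$. You also correctly flag the one real trap --- that $\gamma$-inversion alone does not certify membership in $\text{Im}(\alpha)$ --- which is precisely why the paper, like you, produces explicit preimages rather than arguing via (a)--(c).

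The only divergence is in (e). The paper first shows $\pi_1^2=\gamma(\alpha(\pi_1))=\alpha(\gamma(\pi_1))\in\text{Im}(\alpha)$ using (b), and then runs an induction $\pi_1^n=\pi_1\,\pi_1^{n-2}\,\pi_1$ via (d) from the base cases $n=1,2$. You instead split on parity and give closed-form preimages, $\alpha(\pi_1^{-k})=\pi_1^{2k}$ and $\alpha(\sigma\pi_1^{-k})=\pi_1^{2k+1}$ (equivalently, a single application of (d) with $\pi_2=\pi_1^{k}$, which is legitimate since $\gamma(\pi_1^{k})=(\pi_1^{k})^{-1}$ follows from (a) and multiplicativity of $\gamma$, and the hypothesis of (d) on $\pi_2$ asks only for $\gamma$-inversion, not membership in $\text{Im}(\alpha)$). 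Both rest on the same engine --- the conjugation identity of (d) applied to powers of $\pi_1$ --- but your version buys something small: the statement asserts the claim for all $n\in\mathbb{Z}$, and your closed-form identities hold for every integer $k$ at once, whereas the paper's downward-step induction from $n=1,2$ covers only positive exponents as written and would need an extra remark (e.g.\ $\pi_1^{-1}=\gamma(\pi_1)\in\text{Im}(\alpha)$ by (a) and (b), and $e=\alpha(e)$ for $n=0$) to reach $n\leq 0$. So your treatment of (e) is marginally more complete than the paper's, at the cost of verifying the explicit preimage computations.
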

\begin{proof}
    Notice that there exists $\tau \in \text{Aut}^{\pi}(\Gamma)$ such that $\pi_1 ={\tau}^{-1} \gamma(\tau)$. (a) follow from
    $$
    \gamma(\pi_1) = {\gamma(\tau)}^{-1} \gamma(\gamma(\tau)) = {\gamma(\tau)}^{-1} {(\tau^{-1})}^{-1} = {(\tau^{-1} \gamma(\tau))}^{-1} = {\pi_1}^{-1}.
    $$
    (b) follow from calculation 
    $$
    \gamma(\alpha(\tau_0)) = \gamma({\tau_0}^{-1} \gamma(\tau_0)) = {\gamma(\tau_0)}^{-1} \gamma(\gamma(\tau_0)) = \alpha(\gamma(\tau_0)).
    $$
    (c) follows from the fact that $\gamma \in \text{Aut}(\text{Aut}^{\pi}(\Gamma))$. (d) follows from calculation
    $$
    \alpha(\tau {\pi_2}^{-1}) = {({\pi_2}^{-1})}^{-1} \tau^{-1} \gamma(\tau) \gamma({\pi_2}^{-1}) = \pi_2 \pi_1 {\gamma(\pi_2)}^{-1} = \pi_2 \pi_1 \pi_2.
    $$
    The last one is a little bit more complicated. Firstly, let's observe $\alpha(\pi_1) = {\pi_1}^{-1} \gamma(\pi_1) = {\pi_1}^{-2}$. Applying (b) with $\tau_0 = \pi_1$ gives us 
    $$
    {\pi_1}^{2} = {({\pi_1}^{-2})}^{-1} =  {\alpha(\pi_1)}^{-1} = \gamma(\alpha(\pi_1)) = \alpha(\gamma(\pi_1)) \in \text{Im}(\alpha).
    $$
    We proved the fifth claim for $n = 1,2$. Now using (a) and (d) we can proceed inductively on $n$ since
    $$
    {\pi_1}^n = \pi_1 {\pi_1}^{n-2} \pi_1 \in \text{Im}(\alpha),
    $$
    because $\gamma(\pi_1) = {\pi_1}^{-1}$ and by induction we know that ${\pi_1}^{n-2} \in \text{Im}(\alpha)$.
\end{proof}

Next corollary helps with understanding the shape of a graph which admits a nontrivial permutation $\pi_0$ inside $\text{Im}(\alpha)$. A coclique is an empty graph, which is a graph which does not contain any edges. 

\begin{cor} \label{COR: brak krawedzi w orbitach alphy}
    Let $\Gamma = (V,E)$ be a reduced graph, and let $\pi_1 \in \text{Im}(\alpha)$. Then the graph induced by any orbit of $\pi_1$ on $V$ is a coclique.
\end{cor}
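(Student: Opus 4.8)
The plan is to reduce the whole statement to one clean fact: for every $\sigma\in\text{Im}(\alpha)$ and every vertex $u$, the vertex $\sigma(u)$ is non-adjacent to $u$, i.e. $\sigma(u)\notin N(u)$. Granting this, I would finish as follows. An arbitrary orbit of $\pi_1$ has the form $\{u,\pi_1(u),\pi_1^2(u),\dots\}$, and any two of its vertices can be written as $u$ and $\pi_1^{k}(u)$ for some $k\in\mathbb{Z}$. By Observation \ref{OBS wlasnosci alphy i elementow odwracanych przez gamme}(e) we have $\pi_1^{k}\in\text{Im}(\alpha)$, so the clean fact applied to $\sigma=\pi_1^{k}$ gives $\pi_1^{k}(u)\notin N(u)$. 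Hence no two vertices of the orbit are joined by an edge, and the induced subgraph is a coclique.

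The core of the argument is therefore the clean fact, and here the full strength of membership in $\text{Im}(\alpha)$ (not merely the relation $\gamma(\sigma)=\sigma^{-1}$ from part (a)) is what I would exploit. I would write $\sigma=\rho^{-1}\circ\gamma(\rho)$ for some $\rho\in\text{Aut}^{\pi}(\Gamma)$. Unwinding Definition \ref{definicja funkcji gamma} ($s(u)=N(u)$, $\psi(\rho)(N(u))=\rho(N(u))$, $\gamma(\rho)=s^{-1}\circ\psi(\rho)\circ s$) yields, for every $u$, the set identity $\rho(N(u))=N(\gamma(\rho)(u))$. Now suppose toward a contradiction that $\sigma(u)\in N(u)$, that is $\rho^{-1}(\gamma(\rho)(u))\in N(u)$. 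Applying the bijection $\rho$ to this membership and invoking the identity above, the element $\rho(\rho^{-1}(\gamma(\rho)(u)))=\gamma(\rho)(u)$ lands in $\rho(N(u))=N(\gamma(\rho)(u))$. Thus $\gamma(\rho)(u)\in N(\gamma(\rho)(u))$, i.e. the vertex $\gamma(\rho)(u)$ is adjacent to itself, which is impossible in a loopless graph. Hence $\sigma(u)\notin N(u)$, as required.

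The step I expect to be the main obstacle is resisting the natural but insufficient approach via the edge-equivalence (\ref{ROWNANIE: gamma a krawedzie}) together with $\gamma(\pi_1)=\pi_1^{-1}$: feeding an edge $\{u,\pi_1^{k}(u)\}$ into that equivalence and iterating produces exactly the edges $\{\pi_1^{j}(u),\pi_1^{k-j}(u)\}$, which collapse to a self-loop only when the congruence $2j\equiv k$ is solvable modulo the orbit length. This succeeds for odd orbit lengths but fails precisely for even orbits with odd $k$, and one can check that the hypothesis $\gamma(\pi_1)=\pi_1^{-1}$ alone genuinely allows such even orbits to carry edges. The decisive point I would stress is that passing from ``$\gamma(\sigma)=\sigma^{-1}$'' to ``$\sigma$ is an actual value $\rho^{-1}\circ\gamma(\rho)$ of $\alpha$'' is essential: it is the explicit preimage $\rho$ that manufactures the self-loop directly, eliminating every gap $k$ uniformly once combined with Observation \ref{OBS wlasnosci alphy i elementow odwracanych przez gamme}(e), with no parity case analysis needed.
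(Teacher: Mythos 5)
Your proof is correct and follows essentially the same route as the paper's: both reduce to showing that $(u,\sigma(u))\notin E$ for every $\sigma\in\mathrm{Im}(\alpha)$ by using the explicit preimage $\rho$ (the paper's $\tau$) to manufacture a self-loop at $\gamma(\rho)(u)$, and both then dispose of arbitrary orbit gaps via Observation \ref{OBS wlasnosci alphy i elementow odwracanych przez gamme}(e). The only cosmetic difference is that you unwind Definition \ref{definicja funkcji gamma} into the identity $\rho(N(u))=N(\gamma(\rho)(u))$ and chase a membership, while the paper invokes the equivalent displayed relation (\ref{ROWNANIE: gamma a krawedzie}) with $\pi=\tau^{-1}$; your closing remark that $\gamma(\sigma)=\sigma^{-1}$ alone would not suffice is also accurate (e.g.\ a transposition in $K_3$ lies in $\mathrm{Ant}(\Gamma)$ but moves a vertex to a neighbour).
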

\begin{proof}
    Let us prove that if $\pi_0 \in \text{Im}(\alpha)$, then $(v,\pi_0(v)) \notin E$. If we put $\pi_0 = \tau^{-1} \gamma(\tau)$, $w = \gamma(\tau)(v)$ and now notice that putting $\pi = \tau^{-1}$ in the last thesis of Theorem \ref{THM: kluczowe własności gammy} gives us
    $$
    (w,w) \in E \quad \Longleftrightarrow \quad (\tau^{-1}(w),\gamma(\tau)(w)) = (\pi_0(v),v) \in E.
    $$
    However, since edges have two different ends, this proves that $(\pi_0(v),v) \notin E$.
    \newline
    Two vertices $v$ and $w$ are in the same orbit of $\pi_0$ is the same as saying that there exists $k \in \mathbb{Z}$ such that ${\pi_0}^k(v) = w$, however last point of Observation \ref{OBS wlasnosci alphy i elementow odwracanych przez gamme} assures us, that ${\pi_0}^k \in \text{Im}(\alpha)$ for any $k$, therefore proving our thesis.
 
\end{proof}

We may now present general result about relation between cardinalities of groups $\text{Aut}(\Gamma)$ and $\text{Aut}^{\pi}(\Gamma)$.

\begin{thm} \label{THM: parzystosc H a parzystosc Fix(sigma)}
    Let $\text{H}$ be a finite group, and $\sigma \in \text{Aut}(\text{H})$ be such that $\sigma^2 \equiv Id$. Then 
    $$
    2 \mid [\text{H}:\text{Fix}(\sigma)]\quad \Longrightarrow \quad 2 \mid |\text{Fix}(\sigma)|.
    $$
\end{thm}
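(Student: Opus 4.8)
The plan is to reduce everything to a parity count for the permutation $\sigma$ acting on the finite underlying set $\text{H}$. The key observation is that $\sigma \in \text{Sym}(\text{H})$ satisfies $\sigma^2 \equiv Id$, so every element of $\text{H}$ not fixed by $\sigma$ lies in an orbit $\{x, \sigma(x)\}$ of size exactly two. Pairing up these two-element orbits shows that $|\text{H}| - |\text{Fix}(\sigma)|$ is even, that is,
$$
|\text{H}| \equiv |\text{Fix}(\sigma)| \pmod 2.
$$
This congruence carries essentially the whole content of the argument; the rest is bookkeeping with Lagrange's theorem.

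Then, since $\text{Fix}(\sigma)$ is a subgroup of $\text{H}$ (the fixed points of an automorphism always form a subgroup), I would invoke Lagrange to write $|\text{H}| = |\text{Fix}(\sigma)| \cdot [\text{H}:\text{Fix}(\sigma)]$. Now assume the hypothesis $2 \mid [\text{H}:\text{Fix}(\sigma)]$. Then the right-hand side is even, hence $|\text{H}|$ is even, and combining this with the congruence above forces $|\text{Fix}(\sigma)|$ to be even as well, which is exactly the desired conclusion. (Equivalently, one can argue the contrapositive: if $|\text{Fix}(\sigma)|$ is odd then so is $|\text{H}|$ by the congruence, and every divisor of an odd number is odd, so the index is odd.)

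I do not anticipate any genuine obstacle here: the only temptation to resist is trying to exploit the deeper group-theoretic meaning of $\sigma$ being an automorphism (for instance the commutator-type map $x \mapsto x^{-1}\sigma(x)$ or centralizer arguments), none of which is needed — one uses only that $\text{Fix}(\sigma)$ is a subgroup and that $\sigma$ is an involution on a finite set. The single point deserving a moment's care is the degenerate case $\sigma \equiv Id$, where $\text{Fix}(\sigma) = \text{H}$ and the index is $1$; there the hypothesis $2 \mid [\text{H}:\text{Fix}(\sigma)]$ simply fails and the implication holds vacuously, consistently with the general argument.
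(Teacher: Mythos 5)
Your proof is correct, and it takes a genuinely different route from the paper's. You treat $\sigma$ purely as an involution of the underlying finite \emph{set}: every nontrivial $\langle\sigma\rangle$-orbit has size two, giving the congruence $|\text{H}| \equiv |\text{Fix}(\sigma)| \pmod 2$, after which Lagrange converts the hypothesis $2 \mid [\text{H}:\text{Fix}(\sigma)]$ into $2 \mid |\text{H}|$ and the congruence forces $|\text{Fix}(\sigma)|$ even; each step checks out, including that $\text{Fix}(\sigma)$ is a subgroup. The paper instead lets $\sigma$ act on the set $\mathcal{A}$ of right cosets of $\text{Fix}(\sigma)$: since $|\mathcal{A}|$ is even, the coset $\text{Fix}(\sigma)$ itself is fixed, and the induced permutation is an involution, a second fixed coset $\text{Fix}(\sigma)h$ with $h \notin \text{Fix}(\sigma)$ must exist; writing $\sigma(h) = fh$ yields $f \in \text{Fix}(\sigma)$ with $\sigma(f) = f^{-1}$, hence $f^2 = e$ and $f \neq e$, and the fact that element orders divide the group order finishes. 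The trade-off is instructive: your argument is shorter and more elementary, using nothing about $\sigma$ being an automorphism beyond the subgroup property of $\text{Fix}(\sigma)$, and the parity identity $|\text{H}| \equiv |\text{Fix}(\sigma)| \pmod 2$ you isolate would even give Corollary \ref{COR: parzystosc Aut taka sama jak parzystosc Aut^pi} in one line, without routing through the contrapositive; the paper's argument, on the other hand, is constructive and proves something strictly stronger than evenness, namely that $\text{Fix}(\sigma)$ contains an explicit element of order two of the shape $\sigma(h)h^{-1}$ --- precisely the kind of element inverted by the involution that the $\text{Im}(\alpha)$ and $\text{Ant}(\Gamma)$ machinery of Observation \ref{OBS wlasnosci alphy i elementow odwracanych przez gamme} is built around, which is why the paper remarks on the similarity. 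So you lose that structural by-product, but for the stated implication your parity count is fully adequate, and your closing remarks (the vacuous case $\sigma \equiv Id$, and the contrapositive formulation) are accurate.
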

\begin{proof}
    Let $\mathcal{A}$ be the set of right cosets of $\text{Fix}(\sigma)$ in $\text{H}$. Observe, that $\sigma$ induces a permutation of $\mathcal{A}$. Since $\mathcal{A}$ have an even number of elements, $\text{Fix}(\sigma)$ is mapped on itself and $\sigma^2 \equiv Id$, there have to exist $h \in \text{H} \text{ } \backslash \text{ } \text{Fix}(\sigma)$ such that $\sigma(\text{Fix}(\sigma) h) = \text{Fix}(\sigma) h$. This gives us $f \in \text{Fix}(\sigma)$ such that
    $$
    f h = \sigma(h) \quad \Longleftrightarrow \quad f = h^{-1} \sigma(h),
    $$
    therefore, similarly as in Observation \ref{OBS wlasnosci alphy i elementow odwracanych przez gamme} we have $f = \sigma(f) = f^{-1}$, so $f^2 = e$. However $f \neq e$, because then $h = \sigma(h)$, which gives a contradiction. To finish the proof, recal that the order of an element divides order of a group, hence $2 \mid |\text{Fix}(\sigma)|$.
\end{proof}

\begin{cor} \label{COR: parzystosc Aut taka sama jak parzystosc Aut^pi}
    Let $\Gamma$ be a reduced graph. Then $ 2 \mid |\text{Aut}(\Gamma)| \Longleftrightarrow 2 \mid |\text{Aut}^{\pi}(\Gamma)|$.
\end{cor}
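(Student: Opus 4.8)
The plan is to combine Theorem \ref{THM: kluczowe własności gammy} with Theorem \ref{THM: parzystosc H a parzystosc Fix(sigma)}, setting $H = \text{Aut}^{\pi}(\Gamma)$ and $\sigma = \gamma$. Theorem \ref{THM: kluczowe własności gammy} guarantees that this pair satisfies the hypotheses of Theorem \ref{THM: parzystosc H a parzystosc Fix(sigma)}: namely $\gamma$ is an automorphism of $\text{Aut}^{\pi}(\Gamma)$ with $\gamma^2 \equiv Id$, and crucially $\text{Fix}(\gamma) = \text{Aut}(\Gamma)$. Thus the pair $(\text{Aut}(\Gamma),\text{Aut}^{\pi}(\Gamma))$ is exactly a pair $(\text{Fix}(\sigma),\text{H})$ of the abstract form studied there, and the divisibility question reduces to a statement purely about finite groups.

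First I would dispatch the easy implication. Since $\text{Aut}(\Gamma) \leq \text{Aut}^{\pi}(\Gamma)$ (the Observation recorded just after the definition of $\text{Aut}^{\pi}$), Lagrange's theorem gives $|\text{Aut}(\Gamma)| \mid |\text{Aut}^{\pi}(\Gamma)|$, so $2 \mid |\text{Aut}(\Gamma)|$ immediately forces $2 \mid |\text{Aut}^{\pi}(\Gamma)|$. This settles the direction $\Rightarrow$ without invoking any of the machinery.

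For the reverse implication I would argue by contraposition. Suppose $2 \nmid |\text{Aut}(\Gamma)| = |\text{Fix}(\gamma)|$. Theorem \ref{THM: parzystosc H a parzystosc Fix(sigma)} states that $2 \mid [\text{Aut}^{\pi}(\Gamma):\text{Aut}(\Gamma)]$ would imply $2 \mid |\text{Aut}(\Gamma)|$; since the latter fails by assumption, we conclude $2 \nmid [\text{Aut}^{\pi}(\Gamma):\text{Aut}(\Gamma)]$. As $|\text{Aut}^{\pi}(\Gamma)| = [\text{Aut}^{\pi}(\Gamma):\text{Aut}(\Gamma)] \cdot |\text{Aut}(\Gamma)|$ is then a product of two odd numbers, it is odd, i.e. $2 \nmid |\text{Aut}^{\pi}(\Gamma)|$. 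This is precisely the contrapositive of $\Leftarrow$.

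The essential content — hardly an obstacle, given the preparatory results — sits entirely in Theorem \ref{THM: parzystosc H a parzystosc Fix(sigma)}, which rules out the one dangerous configuration: an odd-order fixed subgroup of even index inside an even-order group. Everything else is Lagrange's theorem. The only care required is to verify, via Theorem \ref{THM: kluczowe własności gammy}, that $\gamma$ genuinely is an involutive automorphism of $\text{Aut}^{\pi}(\Gamma)$ whose fixed set is $\text{Aut}(\Gamma)$ before invoking the abstract group-theoretic statement.
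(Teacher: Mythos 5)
Your proposal is correct and matches the paper's proof in all essentials: the forward direction via $\text{Aut}(\Gamma) \leq \text{Aut}^{\pi}(\Gamma)$ and Lagrange, and the reverse direction by applying Theorem \ref{THM: parzystosc H a parzystosc Fix(sigma)} with $\text{H} = \text{Aut}^{\pi}(\Gamma)$, $\sigma = \gamma$, and $\text{Fix}(\gamma) = \text{Aut}(\Gamma)$ from Theorem \ref{THM: kluczowe własności gammy}. Your phrasing as a contraposition rather than the paper's proof by contradiction is only a cosmetic difference.
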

\begin{proof}
    Since $\text{Aut}(\Gamma) \leq \text{Aut}^{\pi}(\Gamma)$, then it is obvious that $ 2 \mid |\text{Aut}(\Gamma)| \Longrightarrow 2 \mid |\text{Aut}^{\pi}(\Gamma)|$. \\
    We will prove the converse by contrary. Suppose that $2 \mid |\text{Aut}^{\pi}(\Gamma)|$ and $2 \nmid |\text{Aut}(\Gamma)|$. Since $|\text{Aut}^{\pi}(\Gamma)| = |\text{Aut}(\Gamma)| \cdot [\text{Aut}^{\pi}(\Gamma):\text{Aut}(\Gamma)]$ we conclude that $2 \mid [\text{Aut}^{\pi}(\Gamma):\text{Aut}(\Gamma)]$.
    Now letting $\text{H} = \text{Aut}^{\pi}(\Gamma)$ and $\sigma = \gamma$ in Theorem \ref{THM: parzystosc H a parzystosc Fix(sigma)} and using the fact that $\text{Fix}(\gamma) = \text{Aut}(\Gamma)$ from Theorem \ref{THM: kluczowe własności gammy} we get that $2 \mid |\text{Aut}(\Gamma)|$, which gives us a contradiction.
\end{proof}

The above result clearly demonstrate, that the key features that can be used to produce algebraic results about relation between $\text{Aut}(\Gamma)$ and $\text{Aut}^{\pi}(\Gamma)$ for arbitrarily chosen connected reduced and nonbipartite graph $\Gamma$ are based on existence of $\gamma$ and Theorem \ref{THM: kluczowe własności gammy}. In Section \ref{SECTION: Construction of (H,sigma) conected nonbipartite graphs} we will show that indeed, those are the only conditions on pair of groups $G_1 \leq G_2$ necessary for existence of a reduced graph $\Gamma$ such that $G_1 \cong \text{Aut}(\Gamma)$ and $G_2 \cong \text{Aut}^{\pi}(\Gamma)$.

\section{TF-Projections as a subgroup of Automophisms of $\Gamma^2$} \label{SECTION: TF-Projections as a subgroup of Automophisms of Gamma^2}

This section is dedicated to a group which contains $\text{Aut}^{\pi}(\Gamma)$ as a subgroup and can therefore help with determining wherever some graphs are stable. 

\begin{defi}
    Let $\Gamma = (V,E)$ be a graph. Then by $\Gamma^2$ we denote the graph with vertices $V$ and edges between $v_1$ and $v_2$ if and only if there exists $u \in V$ such that $(v_1,u),(u,v_2) \in E$.
\end{defi}

We may now state the first result of this Section.

\begin{thm} \label{THM: Aut^pi(Gamma) < Aut(Gamma^2)}
    Let $\Gamma = (V,E)$ be a reduced graph. Then $\text{Aut}^{\pi}(\Gamma) \leq \text{Aut}(\Gamma^2)$.
\end{thm}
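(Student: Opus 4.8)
The plan is to fix an arbitrary two-fold projection $\pi \in \text{Aut}^{\pi}(\Gamma)$ and check directly that it preserves the edge relation of $\Gamma^2$; since both $\text{Aut}^{\pi}(\Gamma)$ and $\text{Aut}(\Gamma^2)$ sit inside $\text{Sym}(V)$ (recall $\Gamma^2$ has the same vertex set $V$ as $\Gamma$), elementwise containment yields the subgroup inclusion. The only real obstacle is that membership in $\text{Aut}^{\pi}(\Gamma)$ is recorded in terms of $\pi$ carrying neighbourhoods to neighbourhoods, whereas adjacency in $\Gamma^2$ is recorded in terms of common neighbours; reconciling these two viewpoints is the heart of the argument.

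First I would rewrite $\Gamma^2$-adjacency from the ``other side''. Since $\Gamma$ is undirected, for distinct $v_1,v_2 \in V$ one has that $v_1$ and $v_2$ are adjacent in $\Gamma^2$ exactly when there is some $u \in V$ with $(v_1,u),(u,v_2) \in E$, which says precisely that $\{v_1,v_2\} \subseteq N(u)$. Thus $\Gamma^2$-adjacency of $v_1$ and $v_2$ is equivalent to the existence of a neighbourhood $N(u) \in N(\Gamma)$ containing both $v_1$ and $v_2$.

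Next I would apply the projection. By the defining property, formalised through $\gamma$ in Definition \ref{definicja funkcji gamma}, we have $\pi(N(u)) = N(\gamma(\pi)(u))$ for every $u \in V$; and since $\pi$ is a bijection it preserves inclusion, so $\{v_1,v_2\} \subseteq N(u)$ holds iff $\{\pi(v_1),\pi(v_2)\} \subseteq \pi(N(u)) = N(\gamma(\pi)(u))$. By Theorem \ref{THM: kluczowe własności gammy} the map $\gamma(\pi)$ is a permutation of $V$, so as $u$ ranges over $V$ the vertex $\gamma(\pi)(u)$ ranges over all of $V$ as well. Chaining these equivalences, $v_1 \sim v_2$ in $\Gamma^2$ iff some neighbourhood contains both $\pi(v_1)$ and $\pi(v_2)$, i.e. iff $\pi(v_1) \sim \pi(v_2)$ in $\Gamma^2$. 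As $\pi$ is injective, $v_1 \ne v_2$ iff $\pi(v_1) \ne \pi(v_2)$, so the loopless condition is respected; hence $\pi \in \text{Aut}(\Gamma^2)$, and since $\pi$ was arbitrary the inclusion follows.

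An equivalent and perhaps more conceptual route runs through the canonical double cover: a distinct pair $v_1,v_2$ is adjacent in $\Gamma^2$ exactly when $(v_1,0)$ and $(v_2,0)$ are joined by a path of length two in $\text{B}\Gamma$, and the automorphism of $\text{B}\Gamma$ attached to $(\pi,\gamma(\pi)) \in \text{Aut}^{\text{TF}}(\Gamma)$ via Proposition \ref{PROP: Aut^TF jest izo z Aut^pi} acts on the layer $V \times \{0\}$ as $\pi$ while preserving all length-two paths, which gives $\pi \in \text{Aut}(\Gamma^2)$ at once. In either approach the step that must be handled carefully is the quantifier over the intermediate vertex $u$: one needs that $\pi$ permutes the \emph{entire} family $N(\Gamma)$, equivalently that $\gamma(\pi)$ is a permutation of $V$, and not merely that each individual neighbourhood is sent to some neighbourhood.
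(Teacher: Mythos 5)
Your proof is correct and takes essentially the same approach as the paper: both characterise $\Gamma^2$-adjacency as containment of the pair in a common neighbourhood $N(u)$ and then apply the neighbourhood-permuting property of $\pi$. The only difference is in the closing step — the paper shows edges map to edges and invokes finiteness of $E_{\Gamma^2}$, while you obtain the two-sided equivalence directly from the fact that $\gamma(\pi)$ permutes $V$ — a cosmetic variation rather than a different route.
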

\begin{proof}
    Let $\pi_0 \in \text{Aut}^{\pi}(\Gamma)$, and $(v,w) \in E_{\Gamma^2}$. From the definition of $\Gamma^2$ there exist $u \in V$ such that $v,w \in N_{\Gamma}(u)$. Therefore, from the definition of $\text{Aut}^{\pi}(\Gamma)$ there exists $u' \in V$ such that $\pi_0(v),\pi_0(w) \in N_{\Gamma}(u')$. This shows that $(\pi_0(v),\pi_0(w)) \in E_{\Gamma^2}$. This ends the proof, as $\Gamma$ is finite, and it has finitely many edges. This ensures, that $\pi_0 \in \text{Aut}(\Gamma^2)$. 
\end{proof}

\begin{defi}
    Let $\Gamma = (V,E)$ be a finite simple graph. We define $d: V \times V \longrightarrow \mathbb{Z}_{\geq 0} \sqcup \{ \infty \}$ to be a function that outputs the length of the shortest path between two given vertices. We define this function to be $0$ if both vertices are the same, and $\infty$ if there \text{does not exist} a path between these vertices. 
\end{defi}

It is well known, that if $\Gamma$ is a connected graph, then $d(\cdot,\cdot)$ is a metric on $\Gamma$. By diameter of a connected graph $\Gamma$ we understand the maximum of the function $d(v,w)$ over all $v,w \in V$. One can easily check, that most graphs have diameter equal to $2$.

\begin{thm}
    Let $\Gamma = (V,E)$ be a reduced nonbipartite graph of diameter $2$ which does not contain any triangles. Then $\Gamma$ is stable.
\end{thm}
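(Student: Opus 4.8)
The plan is to show that the hypotheses force $\text{Aut}^{\pi}(\Gamma) = \text{Aut}(\Gamma)$, which yields stability. Since $\Gamma$ is connected (diameter $2$ presupposes connectivity), nonbipartite and reduced, we have $\text{Aut}(\text{B}\Gamma) \cong \text{Aut}^{\pi}(\Gamma) \rtimes \mathbb{Z}_2$ with the conjugation action of the nontrivial element given by $\gamma$, and $\text{Fix}(\gamma) = \text{Aut}(\Gamma)$ by Theorem \ref{THM: kluczowe własności gammy}. Thus the equality $\text{Aut}^{\pi}(\Gamma) = \text{Aut}(\Gamma)$ is the same as $\gamma \equiv Id$, which makes the semidirect product direct and gives $\text{Aut}(\text{B}\Gamma) \cong \text{Aut}(\Gamma) \times \mathbb{Z}_2$, i.e. stability.

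The key observation is that the two geometric hypotheses let us recover adjacency from neighbourhoods: for distinct $v,w \in V$ I claim $(v,w) \in E \Leftrightarrow N(v) \cap N(w) = \emptyset$. Indeed, if $(v,w) \in E$ then a common neighbour of $v$ and $w$ would be the third vertex of a triangle, so triangle-freeness forces $N(v) \cap N(w) = \emptyset$; conversely, if $(v,w) \notin E$ then $d(v,w) = 2$ because the diameter is $2$, so $v$ and $w$ have a common neighbour and $N(v) \cap N(w) \neq \emptyset$.

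Now take any $\pi_0 \in \text{Aut}^{\pi}(\Gamma)$ and set $\rho = \gamma(\pi_0)$. By Definition \ref{definicja funkcji gamma} we have $\pi_0(N(v)) = N(\rho(v))$ for every $v$, and $\rho$ is a bijection of $V$. Since $\pi_0$ is a bijection of $V$ it preserves intersections of subsets, so $N(\rho(v)) \cap N(\rho(w)) = \pi_0\big(N(v) \cap N(w)\big)$, which is empty precisely when $N(v) \cap N(w)$ is empty. Combining this with the characterisation of the previous paragraph, and using that $\rho(v) \neq \rho(w)$ whenever $v \neq w$, gives for distinct $v,w$ the chain $(v,w) \in E \Leftrightarrow N(v)\cap N(w) = \emptyset \Leftrightarrow N(\rho(v)) \cap N(\rho(w)) = \emptyset \Leftrightarrow (\rho(v),\rho(w)) \in E$. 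Hence $\rho = \gamma(\pi_0) \in \text{Aut}(\Gamma)$.

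Finally I would conclude: the above shows $\gamma(\text{Aut}^{\pi}(\Gamma)) \subseteq \text{Aut}(\Gamma)$, while $\text{Aut}(\Gamma) \subseteq \text{Aut}^{\pi}(\Gamma)$ and $\gamma(\text{Aut}^{\pi}(\Gamma)) = \text{Aut}^{\pi}(\Gamma)$ by Theorem \ref{THM: kluczowe własności gammy}; therefore $\text{Aut}^{\pi}(\Gamma) = \text{Aut}(\Gamma)$ and $\Gamma$ is stable. The main point to get right is the adjacency characterisation and the observation that it is expressed purely through disjointness of neighbourhood sets, a property automatically respected by the vertex-bijection $\pi_0$; once this is in place the rest is formal. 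It is worth noting that the diameter-$2$ hypothesis is used only on non-adjacent pairs and triangle-freeness only on adjacent pairs, so both directions of the equivalence are genuinely needed.
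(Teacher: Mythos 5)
Your proof is correct, and it takes a genuinely more self-contained route than the paper while resting on the same combinatorial core. Your equivalence $(v,w) \in E \Leftrightarrow N(v) \cap N(w) = \emptyset$ for distinct $v,w$ is exactly the paper's claim $\Gamma^2 = \Gamma^{C}$ restated in terms of neighbourhoods, and both arguments derive it identically (triangle-freeness for one direction, diameter $2$ for the other). Where you diverge is in how this fact is exploited. The paper feeds it into the machinery of Section \ref{SECTION: TF-Projections as a subgroup of Automophisms of Gamma^2}: Theorem \ref{THM: Aut^pi(Gamma) < Aut(Gamma^2)} gives the sandwich $\text{Aut}(\Gamma) \leq \text{Aut}^{\pi}(\Gamma) \leq \text{Aut}(\Gamma^2) = \text{Aut}(\Gamma^{C}) = \text{Aut}(\Gamma)$, forcing equality. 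You instead work directly with $\gamma$: from $\pi_0(N(v)) = N(\gamma(\pi_0)(v))$ and the fact that a bijection of $V$ preserves disjointness of subsets, you conclude $\gamma(\pi_0) \in \text{Aut}(\Gamma)$ for every $\pi_0 \in \text{Aut}^{\pi}(\Gamma)$, and close with $\gamma(\text{Aut}^{\pi}(\Gamma)) = \text{Aut}^{\pi}(\Gamma)$ from Theorem \ref{THM: kluczowe własności gammy}. What your route buys is independence from Theorem \ref{THM: Aut^pi(Gamma) < Aut(Gamma^2)} and from the identity $\text{Aut}(\Gamma^{C}) = \text{Aut}(\Gamma)$, plus transparency: you exhibit $\gamma \equiv Id$ on $\text{Aut}^{\pi}(\Gamma)$ explicitly, which is literally the statement that the semidirect product $\text{Aut}^{\pi}(\Gamma) \rtimes \mathbb{Z}_2$ is direct; in effect you re-prove the needed special case of the $\Gamma^2$ theorem in one stroke. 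What the paper's route buys is brevity given its earlier machinery, and placement of the result inside the $\Gamma^2$ framework that it reuses in Theorem \ref{THM: brak cykli 3,6 i podwierzcholkow implikuje ze Aut^pi(Gamma) = Aut(Gamma^2)}. One cosmetic caution: you assert that stability ``is the same as'' $\gamma \equiv Id$, but your argument only needs (and only justifies in detail) the easy implication that $\gamma \equiv Id$ yields $\text{Aut}(\text{B}\Gamma) \cong \text{Aut}(\Gamma) \times \mathbb{Z}_2$; this matches the reduction of stability to $\text{Aut}(\Gamma) = \text{Aut}^{\pi}(\Gamma)$ that the paper itself invokes, so it is not a gap.
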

\begin{proof}
    We will prove that $\Gamma^2 = \Gamma^{C}$. This is enough, because stability of $\Gamma$ is equivalent to equality $\text{Aut}(\Gamma) = \text{Aut}^{\pi}(\Gamma)$, and Theorem \ref{THM: Aut^pi(Gamma) < Aut(Gamma^2)} implies $\text{Aut}(\Gamma) \leq \text{Aut}^{\pi}(\Gamma) \leq \text{Aut}(\Gamma^2) = \text{Aut}(\Gamma^{C}) = \text{Aut}(\Gamma)$.
    \newline
    Let $(v,w) \in E$. If there is a vertex $u \in V$ such that $(v,u),(w,u) \in E$, then $\Gamma$ would contain a triangle. Therefore, there is no such vertex and $(v,w) \notin E_{\Gamma^2}$. Now, let $(v,w) \notin E$. Since $\text{diam} (\Gamma) = 2$, there must exist such $u \in V$ that $(v,u),(w,u) \in E$, and therefore $(v,w) \in E_{\Gamma^2}$.
\end{proof}

\begin{cor}
    Petersen Graph is stable.
\end{cor}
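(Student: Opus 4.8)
The plan is simply to verify that the Petersen graph $P$ satisfies every hypothesis of the preceding theorem --- that it is reduced, nonbipartite, of diameter $2$, and triangle-free --- after which its stability is immediate. Throughout I would work with the standard model of $P$ as the Kneser graph $K(5,2)$, whose vertices are the $2$-element subsets of $\{1,2,3,4,5\}$ and in which two such subsets are adjacent exactly when they are disjoint.

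The triangle-free and nonbipartite conditions are the quickest to dispatch. Three pairwise disjoint $2$-subsets of a $5$-element set cannot exist, so $P$ contains no triangle; equivalently, $P$ has girth $5$. That same girth also exhibits $5$-cycles, which are odd, so $P$ is nonbipartite. For the diameter I would take any two nonadjacent vertices, i.e. intersecting $2$-subsets $A \neq B$: since $A \cup B$ has exactly three elements, its complement in $\{1,2,3,4,5\}$ is a $2$-subset disjoint from both $A$ and $B$, hence a common neighbour, giving distance $2$. Together with the adjacent case (distance $1$) this yields $\text{diam}(P) = 2$.

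The one point deserving a little care is that $P$ is reduced. Here I would invoke that $P$ is strongly regular with parameters $(10,3,0,1)$, so that any two nonadjacent vertices share exactly one common neighbour. If distinct vertices $v,w$ had $N(v) = N(w)$, then since $P$ is triangle-free they would be nonadjacent, yet a shared neighbourhood of size $3$ contradicts the parameter $\mu = 1$. Hence distinct vertices have distinct neighbourhoods and $P$ is reduced. With all four hypotheses confirmed, the preceding theorem gives that $P$ is stable.

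I do not anticipate any genuine obstacle: the corollary is a direct application of the theorem, and the only mild subtlety --- confirming the reduced property --- is settled at once by the strongly-regular parameters of $P$.
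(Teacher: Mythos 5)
Your proposal is correct and follows exactly the route the paper intends: the corollary is stated there without proof as an immediate application of the preceding theorem, and you supply the routine verification of its hypotheses (reduced, nonbipartite, diameter $2$, triangle-free) via the Kneser graph $K(5,2)$ model. All four checks are sound --- in particular the use of the strongly regular parameters $(10,3,0,1)$ to rule out repeated neighbourhoods is a clean way to confirm that the Petersen graph is reduced.
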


Next theorem does not solve problem of stability directly, but it reduces it to the study of the group $\text{Aut}(\Gamma^2)$ for certain family of reduced graphs $\Gamma$.

\begin{thm} \label{THM: brak cykli 3,6 i podwierzcholkow implikuje ze Aut^pi(Gamma) = Aut(Gamma^2)}
    Let $\Gamma = (V,E)$ be a reduced graph which does not contain any triangles nor cycles of length six and assume additionally there are no vertices $u,v \in V$ such that $N(u) \subsetneq N(v)$. Then $\text{Aut}^{\pi}(\Gamma) = \text{Aut}(\Gamma^2)$.
\end{thm}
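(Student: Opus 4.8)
The plan is to establish the reverse inclusion $\text{Aut}(\Gamma^2) \leq \text{Aut}^{\pi}(\Gamma)$, since Theorem \ref{THM: Aut^pi(Gamma) < Aut(Gamma^2)} already supplies $\text{Aut}^{\pi}(\Gamma) \leq \text{Aut}(\Gamma^2)$. The whole strategy rests on pinning down a combinatorial invariant of $\Gamma^2$ that recovers the family $N(\Gamma)$ of neighbourhoods, namely the \emph{maximal cliques}. Any automorphism of $\Gamma^2$ permutes its maximal cliques, so once I show that the maximal cliques of $\Gamma^2$ are exactly the sets $N(v)$, every $\phi \in \text{Aut}(\Gamma^2)$ will satisfy $\phi(N(v)) = N(w)$ for some $w$, which is precisely the defining property of a two-fold projection.

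First I would record two easy facts. Each $N(v)$ is a clique in $\Gamma^2$, because any distinct $a,b \in N(v)$ share the common neighbour $v$ and are hence $\Gamma^2$-adjacent. Conversely, every clique of $\Gamma^2$ is an independent set of $\Gamma$: two $\Gamma^2$-adjacent vertices have a common $\Gamma$-neighbour, so if they were also $\Gamma$-adjacent they would close a triangle, contradicting triangle-freeness. These observations also show the hypotheses exclude degenerate cases: an isolated vertex would have empty neighbourhood, properly contained in any nonempty one, which the no-proper-containment assumption forbids.

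The central step, and the main obstacle, is the claim that every clique $C$ of $\Gamma^2$ is contained in some neighbourhood, equivalently $\bigcap_{c \in C} N(c) \neq \emptyset$; that is, the sets $\{N(c)\}$ enjoy a Helly-type property. I would prove this by induction on $|C|$. For $|C|=2$ it is the very definition of an edge of $\Gamma^2$. For $|C| = m \geq 3$, the induction hypothesis applied to the $(m-1)$-subsets (still cliques) supplies for each $c_i \in C$ a vertex $p_i$ adjacent to every $c_j$ with $j \neq i$. If some $p_i$ is adjacent to $c_i$ as well we are done; otherwise each $p_i$ misses exactly $c_i$. Choosing any three indices $i,j,k$, I would then exhibit the closed walk
$$
c_i - p_j - c_k - p_i - c_j - p_k - c_i,
$$
all six of whose edges exist by construction. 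The heart of the argument is checking that these six vertices are pairwise distinct: the $c$'s are distinct as members of a clique; the $p$'s are distinct because $p_i$ avoids $c_i$ while the other two hit it; and no $p$ can equal any $c$, since a $\Gamma^2$-clique is $\Gamma$-independent, so a vertex $c_l$ cannot be adjacent to another $c_{j}$. Distinctness makes this a genuine cycle of length six, contradicting the hypothesis, so $C$ must have a common neighbour. This is exactly where both the triangle-free and the no-six-cycle assumptions are used essentially, and getting the coincidence bookkeeping right is the delicate part.

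Finally I would assemble the pieces. By the claim, any maximal clique $M$ lies in some $N(w)$; as $N(w)$ is itself a clique, maximality forces $M = N(w)$, so every maximal clique is a neighbourhood. Conversely each $N(v)$ is maximal: were $N(v)$ contained in a strictly larger clique $C$, the claim would give $N(v) \subseteq C \subseteq N(w)$, whence the no-proper-containment hypothesis yields $N(v) = N(w)$ and so $C = N(v)$, a contradiction. Thus the maximal cliques of $\Gamma^2$ are precisely the neighbourhoods of $\Gamma$, and any $\phi \in \text{Aut}(\Gamma^2)$ carries each $N(v)$ to a neighbourhood, giving $\phi \in \text{Aut}^{\pi}(\Gamma)$ and completing the reverse inclusion.
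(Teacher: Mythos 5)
Your proposal is correct and shares the paper's overall architecture---both reduce the theorem to identifying the maximal cliques of $\Gamma^2$ with the neighbourhoods $N(v)$, and both hinge on the same key lemma, that every clique of $\Gamma^2$ lies inside a single neighbourhood---but your proof of that lemma takes a genuinely different route. The paper argues extremally: it labels each edge of $\Gamma^2$ by the vertices witnessing it, supposes some clique is not monochromatic, takes a maximal monochromatic subclique $K_1$ and a second maximal monochromatic clique $K_2$, and extracts a six-cycle $k_1,v_1,k_2,v_2,k_3,v_3$, with separate degenerate coincidences (e.g.\ $k_1=v_2$) that collapse to triangles. Your Helly-type induction on $|C|$ is more symmetric: by invoking triangle-freeness once, up front, to make every $\Gamma^2$-clique an independent set of $\Gamma$, all the coincidence bookkeeping (the $p$'s distinct from one another and from the $c$'s) is handled uniformly, and the contradiction is the single symmetric six-cycle $c_i,p_j,c_k,p_i,c_j,p_k$; the paper's casework corresponds exactly to the coincidences you dispose of in one stroke. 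What your route buys is a cleaner, more mechanically checkable verification; what the paper's buys is avoiding induction altogether. One small patch is needed: your induction starts at $|C|=2$, yet in the final assembly you apply the claim to maximal cliques that may be singletons (this genuinely occurs, e.g.\ for $\Gamma = K_2$, where $\Gamma^2$ is edgeless and its maximal cliques are single vertices), so you should add the base case $|C|=1$, which holds because a vertex with a neighbour $w$ lies in $N(w)$; your no-isolated-vertices observation supplies this except in the all-isolated case, which by reducedness is the one-vertex graph, where the theorem is immediate---the paper treats this dichotomy and the singleton maximal cliques explicitly.
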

\begin{proof}
    Firstly we show, that there exist a bijection between maximal cliques in $\Gamma^2$ and vertices of $\Gamma$. Let us start by labeling every edge in $\Gamma^2$. One edge can have multiple labels, which are given according to the rule that edge $(v,w) \in E_{\Gamma^2}$ is labeled by $u \in V$ if and only if $v,w \in N(u)$.  We will call clique $K$ monochromatic, if there exists some vertex $u$, such that every edge inside $K$ is labeled by $u$. We will start by showing that every clique in $\Gamma^2$ is monochromatic.
    \newline
    Let $K_0$ be a clique in $\Gamma^2$ which contains at least two vertices. Let $\mathcal{K}$ be a family of monochromatic cliques contained in $K_0$, and $K_1$ be a maximal element of $\mathcal{K}$ with respect to the inclusion relation. Define $k_1$ be a vertex which labels every edge inside $K_1$ and take $v_1$ to be any element form $K_1$. If $K_1 \neq K_0$, then there exist $v_2 \in K_0 \backslash K_1$. Let $K_2$ be a maximal element of $\mathcal{K}$ which contains both $v_1$ and $v_2$. Let $k_2$ be a vertex which labels every edge inside $K_2$. Since $K_1$ was maximal, we get $k_1 \neq k_2$. Maximality of $K_1$ also means, that there exist a vertex $v_3 \in K_1 \backslash K_2$.
    Let $k_3$ be a vertex which labels edge $(v_2,v_3) \in E_{\Gamma^2}$. If $k_3 = k_1$, then $v_2 \in N_{\Gamma}(k_1)$, hence $K_1 \sqcup \{v_2\} \in N_{\Gamma}(k_1)$, and therefore $K_1 \sqcup \{v_2\}$ is a monochromatic clique inside $K$, which contradicts maximality od $K_1$. Similarly, if $k_3 = k_2$, then $v_3 \in N_{\Gamma}(k_2)$ and this time we get a contradiction with maximality of $K_2$. Therefore $k_3 \neq k_1$ and $k_3 \neq k_2$.
    \newline
    If $k_1 \neq v_2$, $k_2 \neq v_3$ and $k_3 \neq v_1$, then $k_1,v_1,k_2,v_2,k_3,v_3$ is a cycle of length six in $\Gamma$, therefore giving us a contradiction. In any other case, for example if $k_1 = v_2$, then $(v_1,v_2) \in E_{\Gamma}$, and then $v_1,v_2,k_2$ forms a triangle in $\Gamma$ once again giving us contradiction. Therefore every clique with at least two vertices is monochromatic. 
    \newline
    Observe now, that there are no isolated vertices, or all vertices all isolated, because there are no vertices $u,v \in V$ such that $N(u) \subsetneq N(v)$. If all vertices are isolated, then since $\Gamma$ is reduced, it contains only one vertex, and thesis is obviously true. In the second case, let $\pi_0 \in \text{Aut}(\Gamma^2)$ and $v$ be any vertex of $\Gamma$. Then $N_{\Gamma}(v)$ is a clique in $\Gamma^2$. It is maximal, because if there was a clique $K_0$ such that $N_{\Gamma}(v) \subsetneq K_0$, then, since all cliques in $\Gamma^2$ are monochromatic, there would exist $w \in V$ such that $N_{\Gamma}(v) \subsetneq N_{\Gamma}(w)$. This obviously contradicts our assumptions. Since $\pi_0$ is an automorphism of $\Gamma^2$, every maximal clique is mapped to a maximal clique, therefore $\pi(N_{\Gamma}(v))$ forms a maximal clique in $\Gamma^2$. If $\pi(N_{\Gamma}(v))$ have at least two elements, then since it is monochrmoatic and maximal, it forms a full neighbourhood of some vertex in $\Gamma$. If on the other hand, $\pi(N_{\Gamma}(v)) = \{u\}$, then there exists some vertex $u' \in V$ such that $(u,u') \in E_{\Gamma}$, because $\Gamma$ does not have isolated vertices. In this case we get $N_{\Gamma}(u') = \{u\}$, otherwise $\{u\}$ would not be a maximal clique in $\Gamma^2$.
    \newline
    We have proven, that every neighbourhood in $\Gamma$ is mapped to some neighbourhood by $\pi_0$, and therefore $\pi_0 \in \text{Aut}^{\pi}(\Gamma)$. This proves that $\text{Aut}(\Gamma^2) \subseteq \text{Aut}^{\pi}(\Gamma)$. Theorem \ref{THM: Aut^pi(Gamma) < Aut(Gamma^2)} ensures us, that the converse holds. 
\end{proof}

    The above theorem may be used to determine whenever some Cayley graphs are stable. Since they do not contain vertices $u,v$ such that $N(u) \subsetneq N(v)$, to apply Theorem \ref{THM: brak cykli 3,6 i podwierzcholkow implikuje ze Aut^pi(Gamma) = Aut(Gamma^2)}, one only have to check whenever it contains triangles or cycles of length six. If no, then $\text{Aut}^{\pi}(\text{Cay}(G,S)) \cong \text{Aut}(\text{Cay}(G,S^2 \backslash \{e\}))$.

\section{Topological Automorphisms} \label{SECTION: Properties of Topological Automorphisms}

In this section we will present two results that may provide an useful tool in proving stability for some families of graphs. First let us rewind one definition.



\begin{defi}
    Let $\Gamma = (V,E)$ be a finite simple graph and $v$ be a vertex.\textit{ A ball of radius} $r$ \textit{centered at} $v$, denoted by $B(v,r)$ is a set 
    $$
    B(v,r) = \{w \in V \mid d(v,w) \leq r \}.
    $$
\end{defi}

\begin{thm} \label{TW: Aut^Tau zachowują kule dowolnych promieni}
    Let $\Gamma = (V,E)$ be a graph such that ${\Gamma}^C$ is reduced. Let $\pi_0 \in \text{Aut}^{\tau}(\Gamma)$. Then
    $$
    \forall_{r \in {\mathbb{Z}}_{>0} } \quad \forall_{v \in V} \quad \pi_0(B(v,r)) = B({\gamma}^{r}(\pi_0) (v),r ),
    $$
    where $\gamma$ is the function, (cf. Definition \ref{definicja funkcji gamma}) defined for $\text{Aut}^{\pi}({\Gamma}^C)$.
\end{thm}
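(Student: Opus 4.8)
The plan is to induct on $r$, with the base case $r=1$ handled by complementation and the inductive step by a ``layering'' decomposition of balls. First I would record that the hypothesis ``$\Gamma^C$ is reduced'' lets us apply the Observation $\text{Aut}^{\tau}((\cdot)^C) = \text{Aut}^{\pi}(\cdot)$ to the reduced graph $\Gamma^C$, yielding $\text{Aut}^{\tau}(\Gamma) = \text{Aut}^{\pi}(\Gamma^C)$. Thus $\pi_0$, and by Theorem \ref{THM: kluczowe własności gammy} each of its iterates $\gamma^r(\pi_0)$, lives in $\text{Aut}^{\pi}(\Gamma^C)$, so every symbol in the claimed identity is well defined and the function $\gamma$ from Definition \ref{definicja funkcji gamma} may be applied to $\pi_0$ as many times as needed.

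For the base case I would exploit the complementation identity $B_\Gamma(v,1) = V \setminus N_{\Gamma^C}(v)$, which is just the dual of the formula $V \setminus N_\Gamma(v) = B_{\Gamma^C}(v,1)$ from the proof of the Observation. By the very definition of $\gamma$ on $\text{Aut}^{\pi}(\Gamma^C)$ we have $\pi_0(N_{\Gamma^C}(v)) = N_{\Gamma^C}(\gamma(\pi_0)(v))$; since $\pi_0$ is a bijection of $V$ it sends complements to complements, so applying $\pi_0$ to $V \setminus N_{\Gamma^C}(v)$ and using the identity twice gives $\pi_0(B_\Gamma(v,1)) = B_\Gamma(\gamma(\pi_0)(v),1)$, which is exactly the statement for $r=1$.

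The inductive step rests on the elementary decomposition $B(v,r+1) = \bigcup_{w \in B(v,1)} B(w,r)$, valid for any graph (with the convention $d=\infty$ across distinct components), which I would verify by a one-line shortest-path argument. Applying $\pi_0$ and distributing over the union, the inductive hypothesis replaces each $\pi_0(B(w,r))$ by $B(\gamma^r(\pi_0)(w),r)$, so the union is now indexed by $\gamma^r(\pi_0)(B(v,1))$. Here I reapply the base case, but to the topological automorphism $\gamma^r(\pi_0)$ rather than $\pi_0$: this turns the index set into $B(\gamma^{r+1}(\pi_0)(v),1)$. Recombining through the same decomposition with centre $\gamma^{r+1}(\pi_0)(v)$ yields $B(\gamma^{r+1}(\pi_0)(v),r+1)$, completing the induction.

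The main obstacle, and the only genuinely delicate bookkeeping, is keeping track of which power of $\gamma$ acts at each radius: the radius-$r$ part of the ball is moved by $\gamma^r(\pi_0)$, while peeling off one more layer of radius $1$ forces an extra application of $\gamma$ via the base case, producing $\gamma^{r+1}(\pi_0)$. Making this rigorous requires observing that $\gamma^r(\pi_0)$ is itself a topological automorphism of $\Gamma$, so that the base case is legitimately reapplicable at each step; this follows since $\gamma$ is an automorphism of $\text{Aut}^{\pi}(\Gamma^C) = \text{Aut}^{\tau}(\Gamma)$. Everything else is routine set manipulation.
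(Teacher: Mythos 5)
Your proof is correct, and it deviates from the paper's in a small but genuine way: the inductive scheme. The paper also inducts on $r$, but in steps of two, writing $B(v,r)=\bigcup_{u}B(u,1)$ over all $u$ with $B(u,1)\cap B(v,r-2)\neq\emptyset$; the atoms of its decomposition are always radius-$1$ balls, which $\pi_0$ permutes by the very definition of $\text{Aut}^{\tau}(\Gamma)$, and the twist by $\gamma$ enters only through the inductive hypothesis applied to the core $B(v,r-2)$ together with $\gamma^2\equiv Id$ (since the index set is characterized by an intersection condition, it is transported by $\pi_0$ without ever invoking the $r=1$ identity for the individual unit balls). Your version instead peels one layer at the center, $B(v,r+1)=\bigcup_{w\in B(v,1)}B(w,r)$, giving a cleaner single-step induction with one base case; the price, which you correctly identify and pay, is that the index set $B(v,1)$ is moved by $\gamma^{r}(\pi_0)$, so you must reapply the $r=1$ case to the iterate $\gamma^{r}(\pi_0)$, and this is only legitimate because $\gamma(\text{Aut}^{\pi}(\Gamma^C))=\text{Aut}^{\pi}(\Gamma^C)=\text{Aut}^{\tau}(\Gamma)$ (Theorem \ref{THM: kluczowe własności gammy} applied to the reduced graph $\Gamma^C$) — a closure fact the paper's bookkeeping never needs. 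Your explicit treatment of the base case via $B_{\Gamma}(v,1)=V\setminus N_{\Gamma^C}(v)$ and $\pi_0(N_{\Gamma^C}(v))=N_{\Gamma^C}(\gamma(\pi_0)(v))$ is also a welcome addition: the paper asserts the $r=1$ case only implicitly (its displayed computation really proves $r=2$), so your complementation argument fills in a step the paper leaves to the reader.
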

\begin{proof}
    Let's start the proof by observing
    $$
    B(v,2) = \bigcup_{u \in \mathcal{U}} B(u,1),
    $$
    where $\mathcal{U}$ is the set of all vertices $u$ such that $v \in B(u,1)$. Therefore 
    $$
    \pi_0(B(v,2)) = \bigcup_{u \in \mathcal{U}'} B(u,1) = B(\pi_0(v),2),
    $$
    where $\mathcal{U}'$ is the set of all vertices $u'$ such that $\pi_0(v) \in B(u',1)$. This proves our thesis for $r = 1,2$.
    \newline

    We now proceed by induction on $r$. Similarly as for $r=2$ let us start with the following observation:
    $$
    B(v,r) = \bigcup_{u \in {\mathcal{U}}_r } B(u,1),
    $$
    where ${\mathcal{U}}_r$ is the set of all vertices $u$ such that the intersection $B(u,1) \cap B(v,r-2)$ is nonempty.
    To finish the proof it is enough to see that
    $$
    \pi_0(B(v,r)) = \bigcup_{u \in \mathcal{U}'_r } B(u,1) = B(\gamma^{r}(\pi_0)(v),r),
    $$
    where $\mathcal{U}'_r$ is the set of all vertices $u'$ such that the intersection $B(u,1) \cap B(\gamma^{r}(\pi_0)(v),r-2)$ is nonempty. This follows, since $\pi_0(B(v,r-2)) = B(\gamma^{r-2}(\pi_0)(v),r-2)$, and \\ $B(\gamma^{r-2}(\pi_0)(v),r-2) = B(\gamma^{r}(\pi_0)(v),r-2)$ because $\gamma^2 \equiv Id$.
\end{proof}

\begin{cor}
    Let $\Gamma = (V,E)$ be a graph such that ${\Gamma}^C$ is reduced. Let $\pi_0 \in \text{Aut}^{\tau}(\Gamma)$. Take $v,w \in V$ to be any pair of vertices. Then for every $n \in {\mathbb{Z}}_{>0}$ the following equivalence holds
    $$
    d(v,w) \in \{ 2n-1,2n \} \quad \Longleftrightarrow \quad d(\pi_0(v),\pi_0(w)) \in \{ 2n-1,2n \}.
    $$
\end{cor}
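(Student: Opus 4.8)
The plan is to reduce everything to Theorem \ref{TW: Aut^Tau zachowują kule dowolnych promieni}, which already records exactly how $\pi_0$ moves balls of a fixed radius. First I would rewrite the condition $d(v,w) \in \{2n-1,2n\}$ purely in terms of ball membership. Since $d(v,w) \leq r$ is equivalent to $w \in B(v,r)$, a distance lies in the band $\{2n-1,2n\}$ precisely when it is at most $2n$ but exceeds $2n-2$, i.e.
$$
d(v,w) \in \{2n-1,2n\} \quad \Longleftrightarrow \quad w \in B(v,2n) \setminus B(v,2n-2).
$$
The crucial feature is that both radii appearing here, $2n$ and $2n-2$, are \emph{even}.

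Next I would push this set difference through the bijection $\pi_0$. Since $\pi_0$ is a bijection of $V$, membership is preserved, so $w \in B(v,2n) \setminus B(v,2n-2)$ holds iff $\pi_0(w) \in \pi_0(B(v,2n)) \setminus \pi_0(B(v,2n-2))$. Now Theorem \ref{TW: Aut^Tau zachowują kule dowolnych promieni} gives $\pi_0(B(v,r)) = B(\gamma^{r}(\pi_0)(v),r)$, and because $\gamma^2 \equiv Id$ we have $\gamma^{r}(\pi_0) = \pi_0$ whenever $r$ is even. Hence $\pi_0(B(v,2n)) = B(\pi_0(v),2n)$ and $\pi_0(B(v,2n-2)) = B(\pi_0(v),2n-2)$, so that $\pi_0(w)$ lies in $B(\pi_0(v),2n) \setminus B(\pi_0(v),2n-2)$, which by the same ball-membership rewriting is exactly the statement $d(\pi_0(v),\pi_0(w)) \in \{2n-1,2n\}$. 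Chaining these equivalences yields the corollary.

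The only point needing mild care is the boundary case $n=1$, where the inner radius $2n-2 = 0$ falls outside the range $r \in \mathbb{Z}_{>0}$ covered by Theorem \ref{TW: Aut^Tau zachowują kule dowolnych promieni}. There $B(v,0) = \{v\}$ and $\pi_0(B(v,0)) = \{\pi_0(v)\} = B(\pi_0(v),0)$ holds trivially from $\pi_0$ being a bijection, so the identity $\pi_0(B(v,r)) = B(\pi_0(v),r)$ for even $r$ extends down to $r=0$. I expect no genuine obstacle here; the only real insight is recognizing that writing the target distance band as a difference of two \emph{even}-radius balls is precisely what makes $\gamma$ collapse to the identity. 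This is also the structural reason the statement is phrased with the pairs $\{2n-1,2n\}$ rather than with individual distances: a topological automorphism need not preserve distance exactly, but it cannot move a distance across an even threshold.
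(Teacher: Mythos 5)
Your proof is correct, and it takes a recognizably different route through the same key theorem. The paper proves the corollary by characterizing the threshold with a \emph{floating} center: $d(v,w) \leq 2n$ if and only if there exists $x \in V$ with $v,w \in B(x,n)$, and Theorem \ref{TW: Aut^Tau zachowują kule dowolnych promieni} then supplies a ball $B(x',n)$ (with $x' = \gamma^{n}(\pi_0)(x)$) containing $\pi_0(v),\pi_0(w)$; since only the \emph{existence} of a common radius-$n$ ball matters, the displaced center is harmless, so the paper never touches parity and never meets a degenerate radius. You instead pin the center at $v$, writing the band as $B(v,2n) \setminus B(v,2n-2)$, which forces you to track the image center exactly --- and that is precisely where the evenness of the radii and $\gamma^2 \equiv Id$ earn their keep, via $\gamma^{r}(\pi_0) = \pi_0$ for even $r$. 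Your handling of the $n=1$ boundary ($B(v,0) = \{v\}$, outside the range of the theorem, patched by bijectivity of $\pi_0$) is necessary and correct; the paper's argument sidesteps this case entirely because its ball radius is $n$, not $2n-2$. What each approach buys: the paper's is shorter and parity-free; yours makes structurally visible \emph{why} the statement is phrased in even-threshold bands $\{2n-1,2n\}$ --- a topological automorphism carries even-radius balls centered at a vertex to even-radius balls centered at its image, so distances cannot cross an even threshold --- a point the paper's proof leaves implicit in its choice of half-radius $n$.
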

\begin{proof}
    Let firstly notice that $d(v,w) \leq 2n$ if and only if there exists such $x \in V$ that $d(v,x),d(x,w) \leq n$, and that is equivalent to the fact that $v,w \in B(x,n)$. By Theorem \ref{TW: Aut^Tau zachowują kule dowolnych promieni} there exists such $x'$ that $\pi_0(v),\pi_0(w) \in B(x',n)$. By the same logic as above, it is equivalent to the fact that $d(\pi_0(v),\pi_0(w)) \leq 2n$, which completes the proof.  
\end{proof}

\section{Construction of conected nonbipartite (H,$\sigma$)-graphs} \label{SECTION: Construction of (H,sigma) conected nonbipartite graphs}

This section is focused on constructing connected, reduced and nonbipartite graphs with the two-fold automorphisms group isomorphic to $\text{H} \rtimes \mathbb{Z}_2$ for any prescribed $\text{H}$.

\begin{defi}
    ($\text{H}$,$\sigma$)-graph, where $\text{H}$ is a group and $\sigma \in \text{Aut}(\text{H})$ is a reduced graph $\Gamma$ such that there exist isomorphism $\psi:{\text{Aut}}^{\pi}(\Gamma) \rightarrow \text{H}$, for which $ \sigma = \psi \circ \gamma \circ \psi^{-1}$ holds.
\end{defi}

Later in this section we construct a graph $\Gamma_{(\text{H},\sigma)}$, and then we prove that it satisfied the aforementioned criteria. To prove that it indeed does, we firstly need to prove a couple lemmas. 

\begin{defi}
     Let $\Gamma = (V,E)$ be a reduced graph, and let $\mathcal{O}_{\text{Aut}^{\pi}(\Gamma)}$ be a partition of $V$ into orbits of action of a group $\text{Aut}^{\pi}(\Gamma)$ on $V$.
    \newline
    Let $\mathcal{P}_1$ and $\mathcal{P}_2$ be two partitions of $V$. We say, that $\mathcal{P}_1$ is a prepartition of $\mathcal{P}_2$, or that $\mathcal{P}_2$ is a subpartition of $\mathcal{P}_1$ if and only if
    $$
    \forall_{P \in \mathcal{P}_2} \quad \exists_{Q \in \mathcal{P}_1} \quad P \subseteq Q.
    $$
\end{defi}

Following lemma will play the crucial role proving that a group isomorphic to $\text{H}$ is not only a subgroup of $\text{Aut}^{\pi}(\Gamma_{(\text{H},\sigma)})$, but is actually equal to this group.

\begin{lem} \label{lemat o trojkącie z ,,blobow'' w konstrukcji}
    Let $\text{H}$ be a finite group, $\sigma \in \text{Aut}(\text{H})$ be such that $\sigma^2 \equiv Id$, $\Gamma = (V,E)$ be a reduced graph, and let $\pi_0 \in \text{Aut}^{\pi}(\Gamma)$. Take $\mathcal{P} = \{V_1,V_2,V_3,V_4\}$ to be a partition of $V$ which is a prepartition of $\mathcal{O}_{\text{Aut}^{\pi}(\Gamma)}$. If there exists such labelings $\psi_i$ of vertices $V_i$, for $i=1,2,3$, $|V_i|=|\text{H}|$ for $i=1,2,3$ and for all $u \in V_1$, $v \in V_2$ and $w \in V_3$
    \begin{align*}
         (u,v) \in E \Longleftrightarrow \sigma(\psi_1(u)) = \psi_2(v) \quad \text{and} \quad (v,w) \in E \Longleftrightarrow \sigma(\psi_2(v)) = \psi_3(w), 
    \end{align*}
    then permutations of $V_1$ and $V_3$ induced by $\pi_0$ with respect to labelings $\psi_1$ and $\psi_3$ give the same permutation of $\text{H}$.
\end{lem}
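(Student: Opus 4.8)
The plan is to reduce the statement to two applications of the edge-equivalence~(\ref{ROWNANIE: gamma a krawedzie}), using the middle block $V_2$ as a bridge between $V_1$ and $V_3$. Write $\pi = \pi_0$ and $\pi' = \gamma(\pi_0)$. By Theorem~\ref{THM: kluczowe własności gammy} we have $\pi' \in \text{Aut}^{\pi}(\Gamma)$, $\gamma^2 \equiv Id$, and hence both of the equivalences
$$(x,y) \in E \Longleftrightarrow (\pi(x),\pi'(y)) \in E \qquad \text{and} \qquad (x,y) \in E \Longleftrightarrow (\pi'(x),\pi(y)) \in E$$
hold for all $x,y \in V$ (the second is~(\ref{ROWNANIE: gamma a krawedzie}) applied to $\pi'$, using $\gamma(\pi') = \pi$). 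First I would record that each block is invariant: since $\mathcal{P}$ is a prepartition of $\mathcal{O}_{\text{Aut}^{\pi}(\Gamma)}$, every $V_i$ is a union of orbits, and any element of a permutation group fixes each of its orbits setwise, so $\pi$ and $\pi'$ both map each $V_i$ onto itself. In particular the permutations $a := \psi_1 \circ \pi \circ \psi_1^{-1}$, $b := \psi_2 \circ \pi' \circ \psi_2^{-1}$ and $e := \psi_3 \circ \pi \circ \psi_3^{-1}$ of $\text{H}$ are well defined, and the claim is exactly that $a = e$.

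Next I would run the edge-equivalence across each interface, translating adjacency through the labelling hypotheses. Across $V_1$–$V_2$: for $u \in V_1$, $v \in V_2$ the hypothesis gives $(u,v) \in E \Leftrightarrow \sigma(\psi_1(u)) = \psi_2(v)$, while the pair $(\pi,\pi')$ gives $(u,v) \in E \Leftrightarrow (\pi(u),\pi'(v)) \in E \Leftrightarrow \sigma(\psi_1(\pi(u))) = \psi_2(\pi'(v))$. Choosing, for each $h \in \text{H}$, the vertices $u = \psi_1^{-1}(h)$ and $v = \psi_2^{-1}(\sigma(h))$ so that the left-hand adjacency holds, the right-hand one must hold too, which reads $\sigma(a(h)) = b(\sigma(h))$; hence $\sigma \circ a = b \circ \sigma$, i.e. $b = \sigma \circ a \circ \sigma$. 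Running the symmetric pair $(\pi',\pi)$ across $V_2$–$V_3$ in the same way, using $(v,w) \in E \Leftrightarrow \sigma(\psi_2(v)) = \psi_3(w)$, gives $\sigma \circ b = e \circ \sigma$, i.e. $e = \sigma \circ b \circ \sigma$. Combining the two and using $\sigma^2 \equiv Id$ yields $e = \sigma \circ (\sigma \circ a \circ \sigma) \circ \sigma = a$, which is the desired conclusion.

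The hard part is not the computation but the choice of directions: one must apply~(\ref{ROWNANIE: gamma a krawedzie}) with $\pi$ acting on the first coordinate across $V_1$–$V_2$ but with $\pi'$ acting on the first coordinate across $V_2$–$V_3$, so that in both interfaces the permutation induced on the middle block $V_2$ is the same one, namely $b = \psi_2 \circ \pi' \circ \psi_2^{-1}$. This is precisely what forces the two copies of $\sigma$ to cancel; had I used the same direction at both interfaces, the middle-block permutations would have been $\psi_2 \circ \pi' \circ \psi_2^{-1}$ and $\psi_2 \circ \pi \circ \psi_2^{-1}$, which need not coincide, and the chain from $a$ to $e$ would not close. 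Note finally that the block $V_4$ plays no role; it is present only so that $\mathcal{P}$ is a genuine partition of $V$.
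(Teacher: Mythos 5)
Your proof is correct and is essentially the paper's own argument: the paper likewise bridges $V_1$ to $V_3$ through $V_2$ by tracking $\gamma(\pi_0)$ on the middle block (phrased pointwise there, via $N(u_2)\cap V_1=\{u_1\}$ and $\pi_0(N(u_2))=N(\gamma(\pi_0)(u_2))$, rather than through the global equivalence~(\ref{ROWNANIE: gamma a krawedzie})), and cancels the two copies of $\sigma$ using $\sigma^2\equiv Id$ exactly as you do. Your conjugation identities $b=\sigma\circ a\circ\sigma$ and $e=\sigma\circ b\circ\sigma$ are simply a global repackaging of the paper's vertex-by-vertex computation, including the same care about which of $\pi_0,\gamma(\pi_0)$ acts on each block.
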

\begin{proof}
     Take $u_1,v_1 \in V_1$ such that $\pi_0(u_1) = v_1$, $\psi_1(u_1) = x$ and $\psi_1(v_1) = y$. Let $u_2,v_2 \in V_2$ be such that $\psi_2(u_2) = \sigma(x)$ and $\psi_2(v_2) = \sigma(y)$. Consider now $\gamma(\pi_0)$, more precisely the image of $u_2$. Since $N(u_2) \cap V_1 = \{u_1\}$, we get $\pi_0(N(u_2)) \cap V_1 = \{v_1\}$. Since $v_1 \in \pi_0(N(u_2)) = N(\gamma(\pi_0)(u_2))$ and $\gamma(\pi_0)(u_2) \in V_2$ we conclude that $\gamma(\pi_0)(u_2) = v_2$.
    \newline
    Let us now define $u_3,v_3 \in V_3$ such that $\psi_3(u_3) = x$ and $\psi_3(v_3) = y$. Similarly as above, we conclude that $\pi_0(u_3) = v_3$, from the fact that $\gamma(\pi_0)(u_2) = v_2$ and $\sigma^2 \equiv Id$. This finishes the proof.
    \center
    \includegraphics[scale = 1]{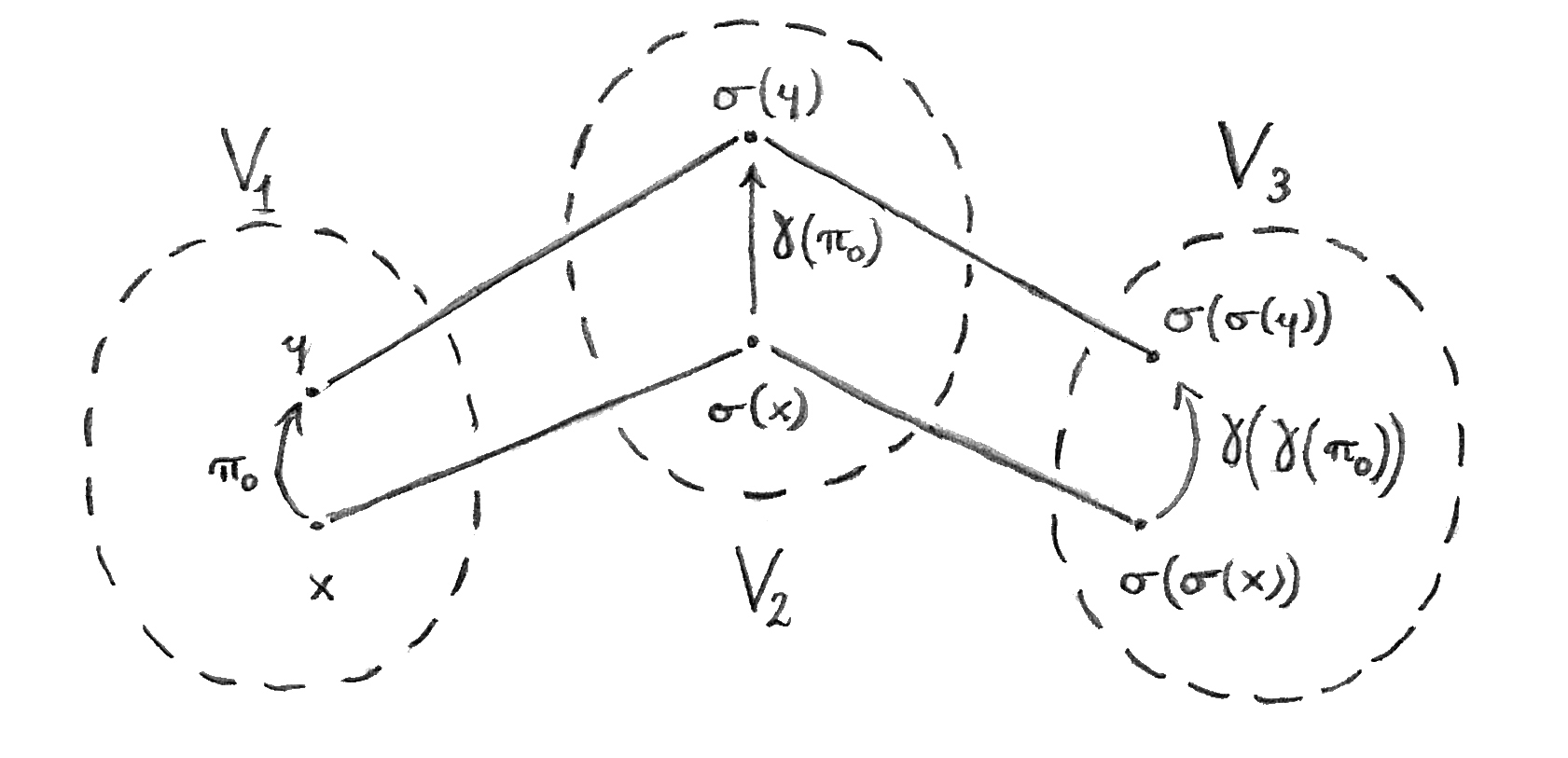}
    
\end{proof}

Following lemma easily produces invariants of the action $\text{Aut}^{\pi}(\Gamma_{(\text{H},\sigma)})$ on vertices. It will play an important part in determining orbits of the action $\text{Aut}^{\pi}(\Gamma_{(\text{H},\sigma)})$. For stating this lemma we need to give some definitions first.

\begin{defi}
    Let $\text{H}$ be a group, $X,Y$ any sets and $\phi: \text{H} \rightarrow \text{Sym}(X)$ be a group homomorphism. We call a function $f: X \rightarrow Y$ an invariant of a group $\text{H}$ with respect to $\phi$ if for any $h \in \text{H}$ and any $x \in X$, $f(\phi(h)(x)) = f(x)$. When $X$ and $\phi$ are clear from the context we call $f$ and invariant of $\text{H}$.
\end{defi}

\begin{defi}
    \textit{Multiset} of a set $X$ is any function $X_{\text{multi}}: X \mapsto \mathbb{Z}_{\geq 0}$. We call elements $x \in X$ such that $X_{\text{multi}}(x) > 0$ \textit{elements of multiset} $X_{\text{multi}}$. For any element $x$ of the multiset $X_{\text{multi}}$ we will refer to the number $X_{\text{multi}}(x)$ as \textit{multiplicity of element} $x$. 
\end{defi}

\begin{defi}
    \textit{Multi power set} of a set $X$ is a set that contains all multisets of a set $X$. It is denoted with ${\mathcal{P}(X)}_{\text{multi}}$.
\end{defi}

\begin{lem} \label{Lemat: Niezmienniki Aut^pi}
Let $\Gamma$ be a reduced graph, $X$ a set and let $f:V \rightarrow X$ be an invariant of $\text{Aut}^{\pi}(\Gamma)$. Then the function $\text{Ne}(f): V \rightarrow {\mathcal{P}(X)}_{\text{multi}}$ defined by
$$
\text{Ne}(f): \quad v \text{ } \mapsto \text{ } \{ f(w) \mid w \in N(v) \}_{\text{multi}}
$$
also is an invariant of $\text{Aut}^{\pi}(\Gamma)$. 
\end{lem}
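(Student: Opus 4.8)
The plan is to verify the invariance condition directly: I would show that $\text{Ne}(f)(\pi_0(v)) = \text{Ne}(f)(v)$ for every $\pi_0 \in \text{Aut}^{\pi}(\Gamma)$ and every $v \in V$. The whole argument rests on the observation that, although a two-fold projection $\pi_0$ is generally not a genuine automorphism, it nevertheless transports neighbourhoods in a controlled way through its companion permutation $\gamma(\pi_0)$.

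First I would establish the identity
$$N(\pi_0(v)) = \gamma(\pi_0)(N(v)).$$
This is immediate from the equivalence (\ref{ROWNANIE: gamma a krawedzie}) of Theorem \ref{THM: kluczowe własności gammy}: for any $w \in V$ one has
$$w \in N(v) \Longleftrightarrow (v,w) \in E \Longleftrightarrow (\pi_0(v),\gamma(\pi_0)(w)) \in E \Longleftrightarrow \gamma(\pi_0)(w) \in N(\pi_0(v)),$$
and since $\gamma(\pi_0)$ is a bijection, the two sets $N(\pi_0(v))$ and $\gamma(\pi_0)(N(v))$ agree.

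Next I would rewrite the target multiset using this identity and reindex through the bijection $\gamma(\pi_0)$, which restricts to a bijection of $N(v)$ onto $N(\pi_0(v))$ and therefore preserves multiplicities:
$$\text{Ne}(f)(\pi_0(v)) = \{ f(w) \mid w \in N(\pi_0(v)) \}_{\text{multi}} = \{ f(\gamma(\pi_0)(w')) \mid w' \in N(v) \}_{\text{multi}}.$$
Finally, Theorem \ref{THM: kluczowe własności gammy} also gives $\gamma(\text{Aut}^{\pi}(\Gamma)) = \text{Aut}^{\pi}(\Gamma)$, so $\gamma(\pi_0) \in \text{Aut}^{\pi}(\Gamma)$; the invariance hypothesis on $f$ then yields $f(\gamma(\pi_0)(w')) = f(w')$ for every $w'$, collapsing the last multiset to $\{ f(w') \mid w' \in N(v) \}_{\text{multi}} = \text{Ne}(f)(v)$.

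The only delicate point is the first step: one must avoid the tempting but false equation $\pi_0(N(v)) = N(\pi_0(v))$, which holds for honest automorphisms but not for two-fold projections, and instead route the neighbourhood of $\pi_0(v)$ through $\gamma(\pi_0)$. Once that identity is in hand, the rest is bookkeeping with multisets together with the facts that $f$ is $\text{Aut}^{\pi}(\Gamma)$-invariant and that $\gamma$ stabilises $\text{Aut}^{\pi}(\Gamma)$.
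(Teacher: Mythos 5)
Your proposal is correct and follows essentially the same route as the paper: both arguments rest on the identity $N(\pi_0(v)) = \gamma(\pi_0)(N(v))$ and then apply the invariance of $f$ to $\gamma(\pi_0)$, which lies in $\text{Aut}^{\pi}(\Gamma)$ since $\gamma$ stabilises that group. Your version merely spells out the derivation of the key identity from the equivalence (\ref{ROWNANIE: gamma a krawedzie}) and the multiset reindexing, which the paper leaves implicit; your warning against the false identity $\pi_0(N(v)) = N(\pi_0(v))$ is well placed.
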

\begin{proof}
    For $\pi_0 \in \text{Aut}^{\pi}(\Gamma)$ we have
    $$
    \text{Ne}(f)(\pi_0(v)) = \{ f(w) \mid w \in N(\pi_0(v)) \}_{\text{multi}} = \{ f(w) \mid w \in \gamma(\pi_0)(N(v)) \}_{\text{multi}}.
    $$
    Since $f$ is an invariant under every element of $\text{Aut}^{\pi}(\Gamma)$, this applies also to $\gamma(\pi_0)$, therefore
    $\text{Ne}(f)(\pi_0(v)) = \{ f(w) \mid w \in N(v) \}_{\text{multi}} = \text{Ne}(f)(v)$.
\end{proof}

Now we will define a graph, which will be a ,,skeleton'' for our construction.

\begin{defi}
    Let $n_0 \geq 6$ be a positive integer. By $R(n_0)$ we understand a graph with vertex set $V_{R(n_0)} = \{1, \ldots, n_0 + 6\}$. Now let us define $$
    E_1 = \{(i,i+1) \mid 1\leq i \leq n_0 - 1 \} \cup \{(2,4)\}, \quad E_2 = \{(i,n_0 + 1),(i,n_0 + 2)\mid 1\leq i \leq n_0 - 1 \}
    $$
    $$
    \text{and} \quad E_3 = \{(n_0+i,n_0+i+1) \mid 1\leq i \leq 5 \} \cup \{(n_0+2,n_0+4),(n_0+2,n_0+6),(n_0+4,n_0+6)\}.
    $$
    Then $E_{R(n_0)} = E_1 \cup E_2 \cup E_3$.
    \newline
    \center
    \includegraphics[scale = 1]{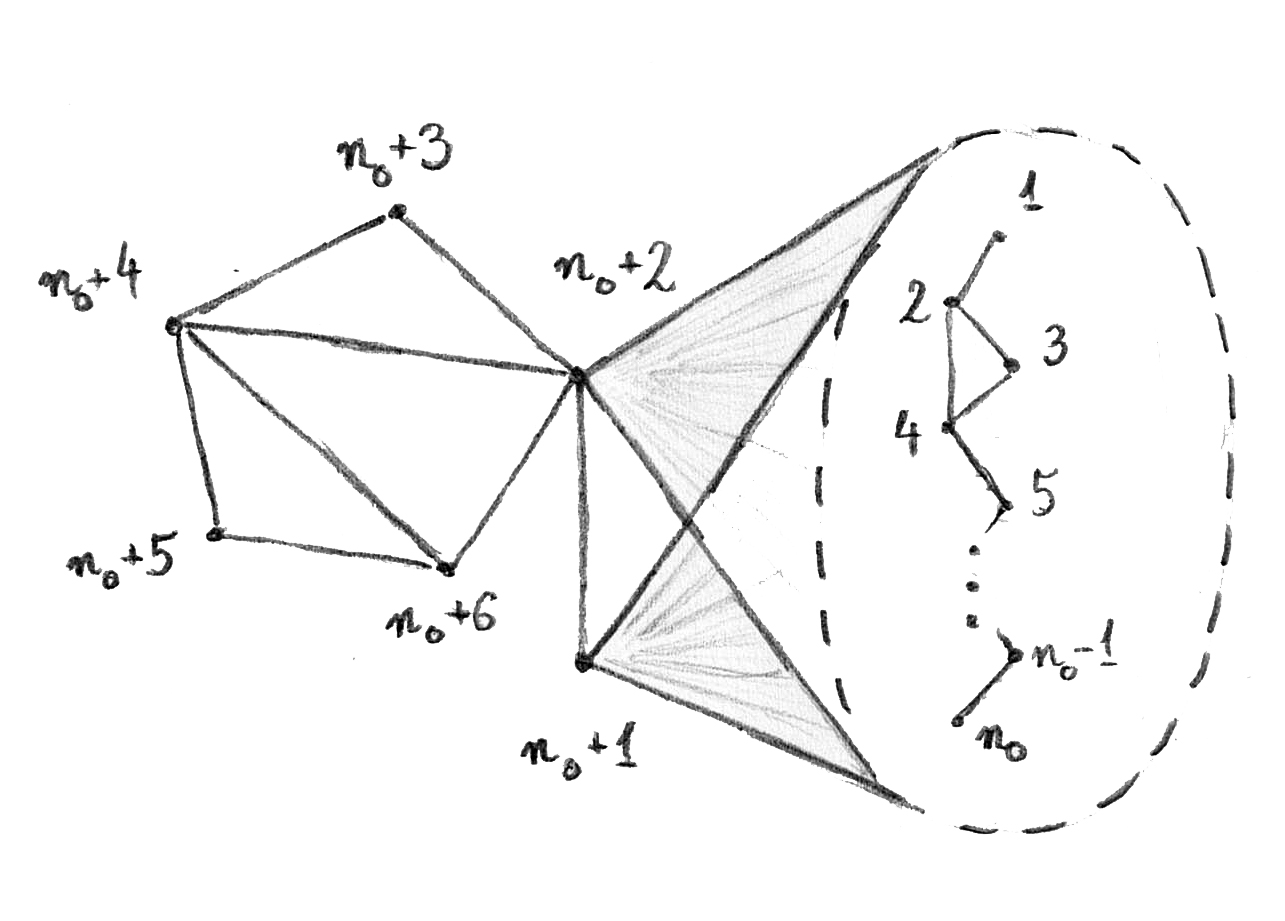}
    
\end{defi}

Let us now prove lemma which will become useful in the proof of Theorem \ref{konstrukcja (H,sigma) grafow}.

\begin{lem} \label{LEM: niezmienniki R(n_0) determinujące wierzcholki}
    For any $n_0 \geq 6$ and there exists $k \in \mathbb{Z}_{\geq 0}$ such that for any vertices $u,v$ of the graph $R(n_0)$ 
    $$
    \text{Ne}^k(\text{deg})(u) \neq \text{Ne}^k(\text{deg})(v).
    $$
\end{lem}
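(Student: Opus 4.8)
The plan is to treat $\text{Ne}^k(\text{deg})$ as an iterated colour-refinement (Weisfeiler--Leman-type) invariant and to show that on $R(n_0)$ it reaches the discrete colouring, i.e.\ becomes injective, after finitely many rounds. First I would record the degree of every vertex: $\text{deg}(n_0)=1$, $\text{deg}(n_0+1)=n_0$, $\text{deg}(n_0+2)=n_0+3$, $\text{deg}(n_0+3)=\text{deg}(n_0+5)=2$, $\text{deg}(1)=\text{deg}(n_0+6)=3$, $\text{deg}(2)=\text{deg}(4)=5$, and $\text{deg}(i)=4$ for $i=3$, $i=n_0+4$, and $5\le i\le n_0-1$. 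Because $n_0\ge 6$, the values $1$, $n_0$, $n_0+3$ are each attained by a single vertex, so $n_0$, $n_0+1$, $n_0+2$ are already separated from everything else by $\text{Ne}^0(\text{deg})=\text{deg}$.

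A key preliminary step is the monotonicity statement that $\text{Ne}^{k+1}(\text{deg})$ refines $\text{Ne}^{k}(\text{deg})$, i.e.\ that $\text{Ne}^{k}(\text{deg})(v)$ is determined by $\text{Ne}^{k+1}(\text{deg})(v)$. This is not automatic for a neighbourhood refinement that forgets the vertex's own colour, but it holds here because the total multiplicity of $\text{Ne}^{k+1}(\text{deg})(v)$ equals $\text{deg}(v)$, and, more generally, the size of each entry $\text{Ne}^{k}(\text{deg})(w)$ records $\text{deg}(w)$. By induction on $k$ one produces a fixed decoding map $\Phi_{k-1}$ with $\text{Ne}^{k}(\text{deg})(v)=\{\,\Phi_{k-1}(x)\mid x\in \text{Ne}^{k+1}(\text{deg})(v)\,\}_{\text{multi}}$. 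Consequently two vertices separated at some round stay separated at every later round, so it suffices to exhibit one round that separates everything, and I may freely speak of a vertex being \emph{uniquely coloured after round $j$}.

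Next I would compute $\text{Ne}^1(\text{deg})$ and verify that round $1$ splits every degree class but one. Within degree $2$ the multisets are $\{n_0+3,4\}$ versus $\{4,3\}$; within degree $3$ they are $\{5,n_0,n_0+3\}$ versus $\{2,n_0+3,4\}$; within degree $5$ they are $\{3,4,5,n_0,n_0+3\}$ versus $\{4,4,5,n_0,n_0+3\}$; all differ since $n_0\ge 6$. The degree-$4$ class is the delicate one: $n_0+4$ is the unique degree-$4$ vertex not adjacent to $n_0+1$ (its multiset omits the colour $n_0$), vertex $3$ the unique one with two degree-$5$ neighbours, vertex $5$ the unique one with exactly one, and $n_0-1$ the unique one adjacent to the leaf $n_0$; these four separate, leaving the \emph{middle} vertices $6,\dots,n_0-2$ sharing the single colour $\{4,4,n_0,n_0+3\}$. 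For $n_0\in\{6,7\}$ there is no middle segment and round $1$ already separates everything.

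The main obstacle, and the heart of the argument, is separating the middle segment, which I would handle by a one-sided peeling induction anchored at the leaf. Having shown $n_0-1$ is uniquely coloured after round $1$, I would prove by induction on $j$ that $n_0-j$ is uniquely coloured after round $j$: the neighbours of the uniquely-coloured vertex $n_0-j+1$ are $n_0-j$, $n_0-j+2$, and the hubs $n_0+1,n_0+2$, and since $n_0-j+2$ and the hubs are already uniquely coloured, $n_0-j$ is the only not-yet-separated vertex whose round-$(j-1)$ neighbourhood multiset contains the colour of $n_0-j+1$; hence it acquires a colour distinct from every remaining middle vertex and, by monotonicity, from everyone else. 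Running this for $j=2,\dots,n_0-6$ separates the middle down to vertex $6$, so every vertex of $R(n_0)$ is uniquely coloured after round $n_0-6$. Taking $k=n_0+6$ and invoking monotonicity then yields $\text{Ne}^k(\text{deg})(u)\neq\text{Ne}^k(\text{deg})(v)$ for all $u\neq v$, as required.
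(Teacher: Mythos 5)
Your proposal is correct, and its skeleton is the same as the paper's proof: pin down the vertices of unique degree, then determine the remaining vertices one at a time by an induction along the path, using the iterated invariants $\text{Ne}^k(\text{deg})$. Two divergences are worth recording. First, you computed degrees from the literal definition of $E_2$ (range $1\le i\le n_0-1$), under which $n_0$ is a leaf, $\deg(n_0+1)=n_0$ and $\deg(n_0+2)=n_0+3$; the paper's own proof instead uses $\deg(n_0)=3$, $\deg(n_0+1)=n_0+1$, $\deg(n_0+2)=n_0+4$, i.e.\ it tacitly takes $E_2$ with $1\le i\le n_0$, and the rest of the paper (for instance the connecting path through $(\sigma(h\cdot x_i),n_0)$ in the proof of Theorem \ref{konstrukcja (H,sigma) grafow}) confirms that $1\le i\le n_0$ is the intended range, so the stated range is a typo in the source. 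Consequently the two proofs treat slightly different graphs: yours anchors the peeling at the leaf $n_0$ and moves down to vertex $6$, the paper's anchors at vertices $1,2,3,4$ and moves up; both schemes adapt to either variant after adjusting the round-$1$ table. (Incidentally, the paper's printed degree list does not match either variant --- under the intended definition $2$ and $4$ have degree five, while $3$, $5,\dots,n_0-1$ and $n_0+4$ have degree four, which is what the paper's subsequent steps actually use.) Second, your explicit refinement lemma --- that $\text{Ne}^{k+1}(\text{deg})$ determines $\text{Ne}^{k}(\text{deg})$ via the decoding maps $\Phi_k$ built inductively from total multiplicities, so that separations persist across rounds --- is a genuine gain in rigor: the paper silently combines vertices ``determined'' at different rounds into a single $k_0$, and your $\Phi_k$ supplies exactly the missing justification for that step. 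I checked your degree table, your round-$1$ multisets (including the degenerate cases $n_0\in\{6,7,8\}$, where the middle segment is empty or a single vertex), and the downward induction for $j=2,\dots,n_0-6$; the final choice $k=n_0+6\ge n_0-6$ then works by your monotonicity lemma, so the argument is complete.
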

\begin{proof}
    For the use of this proof we call a vertex $v$ ,,determined'', if there exists $k_0$ such that the value of $\text{Ne}^{k_0}(\text{deg})(v)$ among all elements of $V$. We will show that all vertices are determined.
    \indent We will start by recognizing that degrees of $n_0+3$ and $n_0+5$ are equal to two, degrees of vertices $1,n_0,n_0+6$ are all equal to three, degrees of $5,\ldots,n_0-1$ and $3,n_0+4$ are equal to five, vertices $2,3,4$ have degree six. For remaining vertices we have $\text{deg}(n_0+1) = n_0 +1$ and $\text{deg}(n_0+2) = n_0 +4$.
    \newline
    Vertices $n_0+1$ and $n_0+2$ have unique degrees and therefore thEY are determined. Observe that if a vertex $v \neq n_0 + 1$ which have a number $n_0+4$ in the multiset $\text{Ne}(\text{deg})(v)$, then it is an element of $V_1 = \{1,\ldots n_0\}$. Notice now that only vertices contained in $V_1$ with degree three are $1$ and $n_0$, however $1$ is connected to a vertex of degree five and vertex $n_0$ is not.
    \newline
    Since vertex $1$ is determined, vertex $2$ is determined too, as it is the only vertex from $V_1$ connected to $1$. Now $3$ is also determined, as the only vertex in $V_1$ of degree four connected to vertex $2$, and therefore $4$ is determined as the only vertex from $V_1$, other than $3$ and connected to $2$. Now we proceed inductively to prove that all vertices in $V_1$ are determined.
    \newline
    Now, since we had already shown that $n_0+1$ and $n_0+2$ are determined, vertices $n_0+3$, $n_0+4$ and $n_0+6$ are also determined, since those are the only neighbours of vertex $n_0+2$ which do not belong to $V_1$, they are different from already determined vertex $n_0+1$ and all of them have different degrees. The only vertex left is $n_0+5$ and therefore it is determined as well, as there exists such $k_0$, that each vertex $v \in V\text{ } \backslash\text{ } \{n_0+5\}$ have unique multiset $\text{Ne}^{k_0}(\text{deg})(v)$. Therefore $\text{Ne}^{k_0}(\text{deg})(n_0+5)$ also is unique, as it cannot be equal to $\text{Ne}^{k_0}(\text{deg})(v)$ for any $v \in V\text{ } \backslash\text{ } \{n_0+5\}$.
\end{proof}

One can be interested in constructing graphs with prescribed two-fold automorphism group with as few vertices as possible. For that reason in our construction we will use some minimal generating set of $\text{H}$. Notice that such set have at most $\text{log}_2(|\text{H}|)$ elements.

\begin{defi}
    For any finite group $\text{H}$ by $\text{rank}(\text{H})$ we denote the minimal cardinality of a set $X \subseteq \text{H}$ satisfying $ \left\langle X \right\rangle  = \text{H}$.
\end{defi}

Now we are ready to define graph $\Gamma_{(\text{H},\sigma)}$ which will turn out to be connected, reduced, nonbipartite and with desired two-fold automorphism group.

\begin{defi} \label{DEFI: Gamma_{H,sigma, X}}
    Let $\text{H}$ be a finite group and $\sigma \in \text{Aut}(\text{H})$ be such that $\sigma^2 \equiv Id$. Let $X = \{x_1, \ldots x_{\text{rank}(\text{H})}\}$ be a generating set of $\text{H}$. Let $n_0 = 1 + \text{max}(\text{rank}(\text{H}),5)$.
    Define
    $$
    E_1 = \{((h_1,i),(h_2,j)) \mid \sigma(h_1) = h_2 \text{ and } i,j \neq n_0+1 \text{ and } (i,j) \in E_{R(n_0)}\},
    $$
    $$
    E_2 = \{((h_1,i),(h_2,n_0+1)) \mid \sigma(h_1) \cdot x_i = h_2 \text{ and } i \leq \text{rank}(\text{H}) \text{ and } (i,n_0+1) \in E_{R(n_0)} \},
    $$
    $$
    E_3 = \{((h_1,i),(h_2,n_0+1)) \mid \sigma(h_1) = h_2 \text{ and } i > \text{rank}(\text{H}) \text{ and } (i,n_0+1) \in E_{R(n_0)}\}.
    $$
    We define the graph $\Gamma_{(\text{H},\sigma),X} = (V,E)$ by putting $V = \text{H} \times \{1,\ldots, n_0 + 6\}$ and $E = E_1 \cup E_2 \cup E_3$. 
\end{defi}

We will start by showing, that $\Gamma_{(\text{H},\sigma),X}$ is indeed connected, reduced and nonbipartite graph, as those properties are easier to show, and will be helpful in proving the missing result about two-fold automorphisms group.

\begin{lem}
     Let $\text{H}$ be a finite group and $\sigma \in \text{Aut}(\text{H})$ be such that $\sigma^2 \equiv Id$. Let $X = \{x_1, \ldots x_{\text{rank}(\text{H})}\}$ be a generating set of $\text{H}$. Then $\Gamma_{(\text{H},\sigma),X}$ is reduced, connected and nonbipartite graph.
\end{lem}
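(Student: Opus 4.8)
The plan is to treat $\Gamma = \Gamma_{(\text{H},\sigma),X}$ as a blow-up of the skeleton $R(n_0)$: each vertex $i$ of $R(n_0)$ is replaced by a \emph{blob} $\text{H}\times\{i\}$, every non-hub edge and every untwisted hub edge ($i>\text{rank}(\text{H})$) becomes the $\sigma$-matching $h\mapsto\sigma(h)$ between the two blobs, and every twisted hub edge ($i\le\text{rank}(\text{H})$) becomes the matching $h\mapsto\sigma(h)x_i$. The single observation driving the whole proof is that each edge of $R(n_0)$ incident to $i$ contributes \emph{exactly one} neighbour to any vertex of the blob over $i$, since each such matching is a bijection of $\text{H}$. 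Consequently the projection $p\colon(h,i)\mapsto i$ preserves degrees, $\text{deg}_\Gamma((h,i))=\text{deg}_{R(n_0)}(i)$, and by a routine induction it intertwines the invariants of Lemma \ref{Lemat: Niezmienniki Aut^pi}, i.e. $\text{Ne}^k(\text{deg})_\Gamma((h,i))=\text{Ne}^k(\text{deg})_{R(n_0)}(i)$ for every $k$.

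\emph{Reduced.} Suppose $N((h,i))=N((h',i'))$. Equal neighbourhoods give equal values of $\text{Ne}^k(\text{deg})$ for all $k$, so by the previous paragraph $\text{Ne}^k(\text{deg})_{R(n_0)}(i)=\text{Ne}^k(\text{deg})_{R(n_0)}(i')$ for all $k$, and Lemma \ref{LEM: niezmienniki R(n_0) determinujące wierzcholki} forces $i=i'$. Now fix any neighbour $j$ of $i$ in $R(n_0)$; the corresponding matching from the blob over $i$ to the blob over $j$ (one of $h\mapsto\sigma(h)$, $h\mapsto\sigma(h)x_i$, or the analogous hub map) is a bijection, so $(h,i)$ and $(h',i)$ having the same image in blob $j$ forces $h=h'$. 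Hence $\Gamma$ is reduced.

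\emph{Nonbipartite and the structure of the $\sigma$-part.} The vertices $2,3,4$ span a triangle in $R(n_0)$ whose three edges are non-hub, hence $\sigma$-matchings; since $\sigma(e)=e$ the triple $(e,2),(e,3),(e,4)$ spans a triangle in $\Gamma$, so $\Gamma$ contains an odd cycle and is nonbipartite. For connectedness I would first analyse the spanning subgraph $\Gamma_\sigma$ obtained by deleting the twisted hub edges, so that $\Gamma_\sigma$ consists only of $\sigma$-matchings. Its blobs are joined as in $R(n_0)$ with the edges $(i,n_0+1)$, $i\le\text{rank}(\text{H})$, removed; this blob-pattern is still connected, as the hub remains reachable through $n_0+2$, to which every $i\le n_0-1$ is joined. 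Because $\Gamma_\sigma$ inherits the triangle $(e,2),(e,3),(e,4)$ — a reachable odd cycle along which the net matching is $\sigma^{3}=\sigma$ — both parities are available between any two blobs, so each connected component of $\Gamma_\sigma$ is exactly $\{h,\sigma(h)\}\times\{1,\dots,n_0+6\}$; that is, the components of $\Gamma_\sigma$ are indexed by the orbits $[h]=\{h,\sigma(h)\}$ of the involution $h\mapsto\sigma(h)$ on $\text{H}$.

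\emph{Connected.} It remains to see how the twisted hub edges glue these components, and this is the main obstacle. A twisted edge over $i$ joins $(h,i)$ to $(\sigma(h)x_i,\text{hub})$; applying this to the member $(\sigma(h),i)$ of the orbit $[h]$ and using $\sigma^2\equiv Id$ joins $[h]$ to $[hx_i]$ for every $i\le\text{rank}(\text{H})$. Since $X$ generates $\text{H}$, right multiplication by the $x_i$ connects every orbit $[h]$ to $[hw]$ for an arbitrary $w\in\text{H}$, so all orbits lie in a single component and $\Gamma$ is connected. The delicate points are exactly the identification of the $\sigma$-components as the two-element orbits $\{h,\sigma(h)\}$ (which needs both the blob-connectivity of $\Gamma_\sigma$ and a reachable odd cycle to merge the two parities) and the verification that the twisted edges realise right multiplication by the generating set $X$ on the orbit space; everything else follows from the ``one edge of $R(n_0)$, one neighbour'' principle of the first paragraph.
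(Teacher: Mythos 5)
Your proposal is correct, and for two of the three claims it is essentially the paper's own argument: the paper likewise projects each neighbourhood bijectively onto a neighbourhood of the skeleton, invokes Lemma \ref{LEM: niezmienniki R(n_0) determinujące wierzcholki} to force $i=i'$, uses injectivity of each matching in the group coordinate to force $h=h'$, and lifts a triangle of $R(n_0)$ at $h=e$ (it takes $n_0+2,n_0+3,n_0+4$ where you take $2,3,4$; immaterial). One small remark on your reducedness step: the detour through the iterated invariants $\text{Ne}^k(\text{deg})$ is heavier than needed, since $N_\Gamma((h,i))=N_\Gamma((h',i'))$ projects directly to $N_{R(n_0)}(i)=N_{R(n_0)}(i')$, so plain reducedness of $R(n_0)$ already gives $i=i'$ — which is all the paper uses.

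Where you genuinely diverge is connectedness. The paper exhibits a single explicit closed walk $(h,n_0+2),(\sigma(h),i),(h\cdot x_i,n_0+1),(\sigma(h\cdot x_i),n_0),(h\cdot x_i,n_0+2)$ of even length $4$, which realises right multiplication by $x_i$ inside the one blob $\text{H}\times\{n_0+2\}$; since $X$ generates $\text{H}$, this blob lies in one component, and connectedness of $R(n_0)$ then drags in every other vertex. Because that walk returns to the same blob with even length, the paper never has to relate $(h,i)$ to $(\sigma(h),i)$, so the entire parity analysis you perform — identifying the components of $\Gamma_\sigma$ as the sheets $\{h,\sigma(h)\}\times\{1,\dots,n_0+6\}$ via an odd cycle in the pruned skeleton, then running a Schreier-type argument on the orbit space — is simply not needed there. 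Your route costs more (connectivity and nonbipartiteness of the pruned skeleton, the two-parity merge, the gluing step), but it buys an exact structural description of the untwisted subgraph and is more robust about which untwisted hub edges exist: you only need the hub to stay reachable, e.g.\ through $n_0+2$, whereas the paper's walk leans on the specific skeleton edges $(n_0,n_0+1)$ and $(n_0,n_0+2)$ — present in the intended definition of $E_2$ of $R(n_0)$, as the degree counts in the proof of Lemma \ref{LEM: niezmienniki R(n_0) determinujące wierzcholki} confirm, even though the displayed formula for $E_2$ stops at $i\leq n_0-1$. Both arguments are sound; the paper's is shorter, yours is more informative.
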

\begin{proof}
    Let us first prove that $\Gamma_{(\text{H},\sigma),X}$ is reduced. Assume that is not the case. Note that a vertex $(h,i)$ have edges to vertices $(h',j)$ where $j$ is such that $(i,j) \in E_{R(n_0)}$. It follows from Lemma \ref{LEM: niezmienniki R(n_0) determinujące wierzcholki} that $R(n_0)$ is reduced, so if two vertices $(h_1,i)$ and $(h_2,j)$ from $\Gamma_{(\text{H},\sigma),X}$ have the same neighbourhoods, then $i=j$. Since for any $i,j \in \mathbb{Z}$ and $h \in \text{H}$ vertex $(h,i)$ is connected by an edge to at most one vertex from $\text{H}\times \{j\}$ and it is determined by $h$ we conclude that $h_1 = h_2$ which ends the proof that $\Gamma_{(\text{H},\sigma),X}$ is reduced.
    \newline
    Now, let us prove that $\Gamma_{(\text{H},\sigma),X}$ is connected. We start by showing, that all vertices of $\text{H} \times \{n_0 + 2\}$ are in the same connected component. To achieve this we first prove that for all $1\leq i \leq \text{rank}(\text{H})$ and $h \in \text{H}$, vertices $(h,n_0+2)$ and $(h\cdot x_i, n_0 +2)$ are in the same connected component.
    \newline
    Consider a path $(h,n_0+2),(\sigma(h),i),(h \cdot x_i,n_0+1),(\sigma(h \cdot x_i),n_0),(h \cdot x_i,n_0+2)$. Now, the claim that $\text{H} \times \{n_0 + 2\}$ is contained in asingle connected component of $\Gamma_{(\text{H},\sigma),X}$ follows from the fact that $X$ is a generating set of $\text{H}$. Connectedness of the graph $R(n_0)$ ends the proof of connectedness.
    \newline
    To finish the proof we have to show that $\Gamma_{(\text{H},\sigma),X}$ is nonbipartite, however it  contains a triangle $(e,n_0+2),(e,n_0+3),(e,n_0+4)$, and the proof is done.
\end{proof}

We are now ready to prove the main theorem of this section.

\begin{thm} \label{konstrukcja (H,sigma) grafow}
     Let $\text{H}$ be a finite group and $\sigma \in \text{Aut}(\text{H})$ be such that $\sigma^2 \equiv Id$. Then there exists $(\text{H},\sigma)$-graph $\Gamma_{(\text{H},\sigma)} = (V,E)$ which is connected, nonbipartite, and have exactly $|\text{H}| \cdot (7 + \text{max}(\text{rank}(\text{H}),5) )$ vertices.
     Moreover, all orbits of ${\text{Aut}}^{\pi}(\Gamma_{(\text{H},\sigma)})$ consists of exactly $|\text{H}|$ vertices and graphs induced by those orbits are cocliques.
\end{thm}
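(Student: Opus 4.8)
My plan is to verify that the graph $\Gamma_{(\text{H},\sigma),X}$ defined above is the required $(\text{H},\sigma)$-graph, so I set $\Gamma = \Gamma_{(\text{H},\sigma),X}$. That $\Gamma$ is reduced, connected and nonbipartite was already established, and since $V = \text{H} \times \{1,\dots,n_0+6\}$ with $n_0 + 6 = 7 + \max(\text{rank}(\text{H}),5)$, the vertex count is exactly $|\text{H}|\cdot(7+\max(\text{rank}(\text{H}),5))$. Everything else reduces to computing $\text{Aut}^{\pi}(\Gamma)$ together with the map $\gamma$. First I would produce a copy of $\text{H}$ inside $\text{Aut}^{\pi}(\Gamma)$: for $g\in\text{H}$ put $\pi^g(h,i)=(gh,i)$. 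Checking the three edge classes $E_1,E_2,E_3$ shows that $(\pi^g,\pi^{\sigma(g)})$ is a two-fold automorphism, because left multiplication by $g$ on the first coordinate is matched by left multiplication by $\sigma(g)$ on the second (e.g. $\sigma(h_1)x_i = h_2$ holds iff $\sigma(gh_1)x_i = \sigma(g)h_2$). Hence $g\mapsto\pi^g$ embeds $\text{H}$ into $\text{Aut}^{\pi}(\Gamma)$, and by Theorem \ref{THM: kluczowe własności gammy} this copy satisfies $\gamma(\pi^g)=\pi^{\sigma(g)}$, so it already realizes the pair $(\text{H},\sigma)$.

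The core of the proof is the reverse bound $|\text{Aut}^{\pi}(\Gamma)|\le|\text{H}|$, which I would reach in two stages. Stage one shows that every $\pi_0\in\text{Aut}^{\pi}(\Gamma)$ fixes each layer $\text{H}\times\{i\}$ setwise. The key point is that for every skeleton-edge at $i$ a vertex $(h,i)$ has exactly one neighbour in the adjacent layer, so the projection $(h,i)\mapsto i$ is a covering onto $R(n_0)$; consequently $\text{deg}$ is constant on each layer and, by induction, so is every iterate $\text{Ne}^{k}(\text{deg})$, whose value on $(h,i)$ equals the corresponding invariant of the vertex $i$ of $R(n_0)$. Since $\text{deg}$ is $\text{Aut}^{\pi}(\Gamma)$-invariant, Lemma \ref{Lemat: Niezmienniki Aut^pi} makes each $\text{Ne}^{k}(\text{deg})$ invariant as well, and Lemma \ref{LEM: niezmienniki R(n_0) determinujące wierzcholki} gives a $k$ separating all vertices of $R(n_0)$; hence this invariant separates the layers of $\Gamma$, forcing $\pi_0$ to preserve each layer. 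In particular each orbit lies in a single layer, and I may write $\pi_0(h,i)=(p_i(h),i)$ with $p_i\in\text{Sym}(\text{H})$.

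Stage two identifies all the $p_i$. Let $R_\sigma$ be the graph on $\{1,\dots,n_0+6\}$ whose edges are the $\sigma$-linked ones, i.e. all skeleton edges except the generator edges in $E_2$; it is connected and, because of the triangle on $\{n_0+2,n_0+4,n_0+6\}$, nonbipartite. For each length-two path $a$--$b$--$c$ in $R_\sigma$ I would apply Lemma \ref{lemat o trojkącie z ,,blobow'' w konstrukcji}, taking $V_1,V_2,V_3$ to be the three layers, $V_4$ the union of the remaining layers, and the identity labelings; this yields $p_a=p_c$. Since any two vertices of a connected nonbipartite graph are joined by an even walk, it follows that $p_i\equiv P$ for a single $P\in\text{Sym}(\text{H})$, and applying the edge-criterion of Theorem \ref{THM: kluczowe własności gammy} to an $E_1$-edge shows that $\gamma(\pi_0)$ acts on every layer as $\sigma P\sigma$. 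Feeding this into a generator edge of $E_2$, the preservation of $\sigma(h)x_i=h'$ under $(\pi_0,\gamma(\pi_0))$ simplifies, after applying $\sigma$, to $P(h\,\sigma(x_i))=P(h)\,\sigma(x_i)$ for all $h$ and all $i\le\text{rank}(\text{H})$. As $\{\sigma(x_i)\}$ generates $\text{H}$, this means $P$ commutes with every right translation, so $P$ is left multiplication by $g:=P(e)$ and $\pi_0=\pi^{g}$. Therefore $\text{Aut}^{\pi}(\Gamma)=\{\pi^{g}:g\in\text{H}\}\cong\text{H}$ with $\gamma$ corresponding to $\sigma$, and $\Gamma$ is an $(\text{H},\sigma)$-graph.

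Finally, the moreover clause is immediate: the group $\{\pi^{g}\}$ acts on layer $i$ by $g\mapsto(gh,i)$, which is transitive, so each layer is a single orbit of size $|\text{H}|$, and since every edge of $\Gamma$ changes the second coordinate, no orbit contains an edge, so each orbit induces a coclique. I expect Stage two to be the main obstacle: the delicate part is using the nonbipartiteness of $R_\sigma$ together with Lemma \ref{lemat o trojkącie z ,,blobow'' w konstrukcji} to make all $p_i$ equal, and then exploiting that $X$ generates $\text{H}$ to collapse the common permutation to a single left multiplication; by comparison the embedding and the orbit computation are routine.
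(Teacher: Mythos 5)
Your proof is correct and follows essentially the same route as the paper: the invariants $\text{Ne}^k(\text{deg})$ together with Lemma \ref{LEM: niezmienniki R(n_0) determinujące wierzcholki} pin each layer $\text{H}\times\{i\}$, left translations realize the copy of $\text{H}$ with $\gamma(\pi^g)=\pi^{\sigma(g)}$, and Lemma \ref{lemat o trojkącie z ,,blobow'' w konstrukcji} chained along even walks forces an arbitrary $\pi_0$ to be a left translation. The only differences are tactical and harmless: the paper anchors at layer $n_0+2$, applies the triangle lemma with the twisted labeling $\psi_3:(h\cdot x_i,n_0+1)\mapsto h$ to absorb the generator edges, and then propagates through $R(n_0)$ with the vertex $n_0+1$ deleted, whereas you first equalize all layer permutations to a single $P$ over the $\sigma$-linked subgraph $R_\sigma$ and afterwards extract $P(h\,\sigma(x_i))=P(h)\,\sigma(x_i)$ from the edge criterion of Theorem \ref{THM: kluczowe własności gammy} --- with the single cosmetic caveat that to obtain $\gamma(\pi_0)=\sigma P\sigma$ on layer $n_0+1$ you must invoke a $\sigma$-linked $E_3$-edge rather than an $E_1$-edge, which your computation covers verbatim.
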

\begin{proof}
    Take any generating set $X$ of cardinality $\text{rank}(\text{H})$. We will prove that $\Gamma_{(\text{H},\sigma)} = \Gamma_{(\text{H},\sigma),X}$ satisfies all the conditions stated in the theorem. Since permutations from $\text{Aut}^{\pi}(\Gamma_{(\text{H},\sigma)})$ correspond to automorphisms of $\text{B}\Gamma_{(\text{H},\sigma)}$, the degree is an invariant of $\text{Aut}^{\pi}(\Gamma_{(\text{H},\sigma)})$. Therefore, by Lemma \ref{LEM: niezmienniki R(n_0) determinujące wierzcholki}, for any $k \in \mathbb{Z}$ function $\text{Ne}^k(\text{deg})$ is an invariant of $\text{Aut}^{\pi}(\Gamma_{(\text{H},\sigma)})$. Hence $\{ \text{H} \times \{i\} \mid 1 \leq i \leq n_0 + 6\}$ is a prepartition of $\mathcal{O}_{\text{Aut}^{\pi}(\Gamma_{(\text{H},\sigma)})}$. On the other hand, let $h$ be any element of $\text{H}$. Then the function $h\cdot$ defined by formula $h\cdot : (h_0,i) \mapsto (h\cdot h_0,i)$ is a two-fold projection, because $(h\cdot,\sigma(h)\cdot) \in \text{Aut}^{\text{TF}}(\Gamma_{(\text{H},\sigma)})$. This shows that in fact $\mathcal{O}_{\text{Aut}^{\pi}(\Gamma_{(\text{H},\sigma)})} = \{ \text{H} \times \{i\} \mid 1 \leq i \leq n_0 + 6\}$, and therefore proves that subgraphs induced by those are in fact cocliques. It also implies that $\text{Aut}^{\pi}(\Gamma_{(\text{H},\sigma)})$ contains a subgroup isomorphic to $\text{H}$. We will now focus on proving, that this copy of $\text{H}$ is the full group of two-fold projections of $\Gamma_{(\text{H},\sigma)}$.
    \newline
    Let $\pi_0 \in \text{Aut}^{\pi}(\Gamma_{(\text{H},\sigma)})$. We want to show that there exists $h_0 \in \text{H}$ such that $\pi_0 \equiv h_0 \cdot$. We start by proving, that for each $i \in \{1,\dots, \text{rank}(\text{H})\}$, if $\pi_0((g,n_0+2)) = (h \cdot g,n_0+2)$ then $\pi_0((g\cdot x_i,n_0+2)) = (h \cdot g \cdot x_i,n_0+2)$.
    \newline
    By applying Lemma \ref{lemat o trojkącie z ,,blobow'' w konstrukcji} with $V_1 = \text{H} \times \{n_0 +2 \}$, $V_2 = \text{H} \times \{i\}$, $V_3 = \text{H} \times \{n_0 +1 \}$ and functions $\psi_i$ given by formulas
    $$
    \psi_1 : (h,n_0+2) \mapsto h, \quad \psi_2 : (h,i) \mapsto h, \quad \text{and} \quad \psi_3 : (h \cdot x_i,n_0+1) \mapsto h,
    $$ 
    shows, that $\pi_0((g,n_0+1)) = (h \cdot g,n_0+1)$. Now applying Lemma \ref{lemat o trojkącie z ,,blobow'' w konstrukcji} once again with $V_1 = \text{H} \times \{n_0 + 1 \}$, $V_2 = \text{H} \times \{n_0\}$, $V_3 = \text{H} \times \{n_0 +2 \}$ and functions $\psi_i$ given by projections to the first factor proves our claim.
    \newline
    Since above reasoning can be applied for any element of a generating set $X$, it shows that there exists $h_0 \in \text{H}$ such that for every $h \in \text{H}$ it is true that $\pi_0((h,n_0+2)) = (h_0 \cdot h,n_0+2)$. To finish the proof, we have to show that $\pi_0$ acts in the same way on other vertices of $\Gamma_{(\text{H},\sigma)}$.
    \newline
    We will once again apply Lemma \ref{lemat o trojkącie z ,,blobow'' w konstrukcji}. Put $V_1 = \text{H} \times \{n_0 + 2 \}$, $V_2 = \text{H} \times \{n_0\}$, $V_3 = \text{H} \times \{n_0 +1 \}$ and define functions $\psi_i$ as projections to the first coordinate we obtain that $\pi_0$ acts the same as $h_0\cdot$ not only on $\text{H} \times \{n_0 + 2 \}$ but also on $\text{H} \times \{n_0 + 1 \}$.
    \newline
    One can observe, that if $1 \leq p,q,r \leq n_0 +6$ are all different from $n_0 + 1$ and both $(p,q)$, $(q,r)$ are edges in $E_{R(n_0)}$, then applying Lemma \ref{lemat o trojkącie z ,,blobow'' w konstrukcji} for $V_1 = \text{H} \times \{p\}$, $V_2 = \text{H} \times \{q\}$, $V_3 = \text{H} \times \{r\}$ and functions $\psi_i$ given by projection to the first coordinate we obtain that if $\pi_0$ acted the same as $h_0 \cdot$ on $\text{H} \times \{p\}$, it also does on $\text{H} \times \{r\}$. Since the graph $R(n_0)$ is connected and nonbipartite after removing vertex $n_0 + 1$, one can find a path of even length between any vertices of this graph. Reasoning above with this observation about $R(n_0)$ shows, that in fact $\pi_0 \equiv h_0 \cdot$, which ends the proof of the theorem.
\end{proof}

\section{Description of $\alpha$-automorphic graphs} \label{SECTION: Clasification and construction of alpha-automorphic graphs}

Now we will investigate a rare case of asymmetric unstable graphs.

\begin{defi}
    We call a graph $\Gamma$ an $\alpha$\textit{-automorphic graph} if and only if $\text{Aut}(\Gamma) = \{ Id \}$ and ${\text{Aut}}^{\pi}(\Gamma) \neq \{ Id \}$.
\end{defi}

$\alpha$-automorphic graphs were at first recognized in \cite{zbMATH05249593} by Wilson, and then further studied in \cite{zbMATH07349648} by Lauri, Mizzi and Scapellato. In \cite{zbMATH07349648} they showed how to construct an $\alpha$-automorphic graph $\Gamma$ with $4m$ vertices, such that ${\text{Aut}}^{\pi}(\Gamma) \cong {\mathbb{Z}}_{m}$ for any odd integer $m$. They noticed the importance of the set $\text{Ant}(\Gamma)$, however they did not recognize, that ${\text{Aut}}^{\pi}(\Gamma) = \text{Ant}(\Gamma)$ if given graph is asymmetric. Following theorem proves it in detail. For any group $\text{H}$ we denote the function that maps every element to its' inverse by ${ [\cdot] }^{-1}$.

\begin{thm} \label{THM: charakteryzacja alpha-automorficznych}
    If $\Gamma = (V,E)$ is an $\alpha$-automorphic graph, then ${\text{Aut}}^{\pi}(\Gamma)$ is an abelian group of odd order, and $\gamma \equiv { [\cdot] }^{-1}$. Moreover, supgraph induced by any orbit of ${\text{Aut}}^{\pi}(\Gamma)$ on $V$ is an empty graph.
\end{thm}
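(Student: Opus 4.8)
The plan is to recognise that for an $\alpha$-automorphic graph the involutory automorphism $\gamma$ of $\text{H} := \text{Aut}^{\pi}(\Gamma)$ is fixed-point-free, and then to feed this fact into the machinery already built for $\alpha$ and $\gamma$ rather than reproving the classical structure theorem for fixed-point-free involutions from scratch.

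First I would note that, since $\Gamma$ is $\alpha$-automorphic, $\text{Aut}(\Gamma) = \{ Id \}$, so Theorem \ref{THM: kluczowe własności gammy} gives $\text{Fix}(\gamma) = \{ Id \}$. Substituting $|\text{Aut}(\Gamma)| = 1$ into Theorem \ref{THM: alpha jest stala na prawostronnych warstwach Aut} yields $|\text{Im}(\alpha)| = |\text{H}|$; as $\text{Im}(\alpha) \subseteq \text{H}$, this forces $\text{Im}(\alpha) = \text{H}$. In other words, every two-fold projection of $\Gamma$ has the form $\pi^{-1} \gamma(\pi)$.

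The identification $\gamma \equiv [\cdot]^{-1}$ is then immediate: by Observation \ref{OBS wlasnosci alphy i elementow odwracanych przez gamme}(a) every element of $\text{Im}(\alpha)$ is inverted by $\gamma$, and since $\text{Im}(\alpha) = \text{H}$ this says $\gamma(\pi) = \pi^{-1}$ for all $\pi \in \text{H}$. Commutativity then follows from $\gamma$ being a homomorphism (Theorem \ref{THM: kluczowe własności gammy}): for $a,b \in \text{H}$, comparing $\gamma(ab) = (ab)^{-1} = b^{-1} a^{-1}$ with $\gamma(a)\gamma(b) = a^{-1} b^{-1}$ gives $ab = ba$. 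For the odd order I would simply invoke Corollary \ref{COR: parzystosc Aut taka sama jak parzystosc Aut^pi}: since $|\text{Aut}(\Gamma)| = 1$ is odd, $|\text{H}|$ is odd as well. (Equivalently, an element of order two would be inverted to itself by $\gamma$, contradicting $\text{Fix}(\gamma) = \{ Id \}$.)

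Finally, for the claim that each orbit induces an empty graph, I would take two vertices $v,w$ lying in a common orbit of $\text{H}$, write $w = \pi(v)$ for some $\pi \in \text{H} = \text{Im}(\alpha)$, and apply the step established in the proof of Corollary \ref{COR: brak krawedzi w orbitach alphy}, namely that $\pi \in \text{Im}(\alpha)$ implies $(v,\pi(v)) \notin E$; hence no two vertices of an orbit are adjacent. The one point I would be careful about — and the only genuine obstacle — is that Corollary \ref{COR: brak krawedzi w orbitach alphy} as stated concerns orbits of a single element $\pi_1$, whereas here I need orbits of the whole group $\text{H}$. The reduction works precisely because $\text{Im}(\alpha) = \text{H}$, so every group element is simultaneously one of the inverted elements covered by that corollary, and the orbit of the group is covered vertex-pair by vertex-pair.
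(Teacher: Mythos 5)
Your proof is correct and follows essentially the same route as the paper's: the same chain of Theorem \ref{THM: alpha jest stala na prawostronnych warstwach Aut} (giving $\text{Im}(\alpha) = \text{Aut}^{\pi}(\Gamma)$ from $|\text{Aut}(\Gamma)|=1$), Observation \ref{OBS wlasnosci alphy i elementow odwracanych przez gamme}(a) for $\gamma \equiv {[\cdot]}^{-1}$, Corollary \ref{COR: parzystosc Aut taka sama jak parzystosc Aut^pi} for odd order, and Corollary \ref{COR: brak krawedzi w orbitach alphy} for the orbits, where your explicit reduction of group orbits to single-element orbits is precisely what the paper calls a ``straightforward application.'' The only thing you skipped is the paper's opening one-liner that $\Gamma$ is reduced (twin vertices would give a nontrivial automorphism by transposing them), which should be stated before invoking any of the cited results, since all of them assume a reduced graph.
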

\begin{proof}
    At first we can notice that $\Gamma$ is reduced. If not, there would exist $u,v \in V$ with $N(u) = N(v)$. Then the transposition of those two would be a nontrivial automorphism. Let us now apply Corollary \ref{COR: parzystosc Aut taka sama jak parzystosc Aut^pi} with $|\text{Aut}(\Gamma)| = 1$. We get that ${\text{Aut}}^{\pi}(\Gamma)$ is a group of odd order. By Theorem \ref{THM: alpha jest stala na prawostronnych warstwach Aut} we obtain that ${\text{Aut}}^{\pi}(\Gamma) = \text{Im}(\alpha)$, and by applying first part of Observation \ref{OBS wlasnosci alphy i elementow odwracanych przez gamme} we get that $\gamma \equiv { [\cdot] }^{-1}$. Since $\gamma$ is an automorphism of ${\text{Aut}}^{\pi}(\Gamma)$, it follows that it is an abelian group.
    \newline
    Now, we prove, that there are no edges between vertices in the same orbit, however it is a straightforward application of Corollary \ref{COR: brak krawedzi w orbitach alphy} and the fact that ${\text{Aut}}^{\pi}(\Gamma) = \text{Im}(\alpha)$. 
\end{proof}

 As mentioned earlier, Lauri, Mizzi and Scapellato showed, that any cyclic group of odd order can be isomorphic to a group of TF-projections of $\alpha$-automorphic graph. Result below shows, that in fact limitations showed in Theorem \ref{THM: charakteryzacja alpha-automorficznych} are the only existing ones.

\begin{obs}
    If $\text{H}$ is nontrivial abelian group of odd order, then there exist $(\text{H},{ [\cdot] }^{-1})$-graph which is $\alpha$-automorphic.
\end{obs}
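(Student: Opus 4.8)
The plan is to apply the main construction of Theorem \ref{konstrukcja (H,sigma) grafow} with the specific choice $\sigma = {[\cdot]}^{-1}$, and then verify that the resulting graph is genuinely $\alpha$-automorphic. First I would check that $\sigma = {[\cdot]}^{-1}$ is an admissible input for the construction: since $\text{H}$ is abelian, the inversion map is a group automorphism, and it plainly satisfies $\sigma^2 \equiv Id$. Thus Theorem \ref{konstrukcja (H,sigma) grafow} produces a connected, reduced, nonbipartite graph $\Gamma_{(\text{H},\sigma)}$ with $\text{Aut}^{\pi}(\Gamma_{(\text{H},\sigma)}) \cong \text{H}$ and, by the definition of an $(\text{H},\sigma)$-graph, with $\gamma$ corresponding to $\sigma = {[\cdot]}^{-1}$ under this isomorphism. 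In particular $\text{Aut}^{\pi}(\Gamma_{(\text{H},\sigma)})$ is nontrivial because $\text{H}$ is assumed nontrivial.

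The remaining task is to show $\text{Aut}(\Gamma_{(\text{H},\sigma)}) = \{Id\}$. By Theorem \ref{THM: kluczowe własności gammy} we have $\text{Fix}(\gamma) = \text{Aut}(\Gamma_{(\text{H},\sigma)})$, so it suffices to show that the only fixed point of $\gamma$ is the identity. Transporting through the isomorphism $\psi$, this is equivalent to computing $\text{Fix}({[\cdot]}^{-1})$ inside $\text{H}$, i.e. the set of $h \in \text{H}$ with $h^{-1} = h$, equivalently $h^2 = e$. Here I would invoke the hypothesis that $\text{H}$ has \emph{odd} order: by Lagrange's theorem the only element satisfying $h^2 = e$ in a group of odd order is $h = e$. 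Hence $\text{Fix}(\gamma)$ is trivial and $\text{Aut}(\Gamma_{(\text{H},\sigma)}) = \{Id\}$, so the graph is asymmetric.

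Finally, combining $\text{Aut}(\Gamma_{(\text{H},\sigma)}) = \{Id\}$ with $\text{Aut}^{\pi}(\Gamma_{(\text{H},\sigma)}) \cong \text{H} \neq \{Id\}$ gives exactly the definition of an $\alpha$-automorphic graph, and since $\gamma \equiv {[\cdot]}^{-1}$ by construction the graph is an $(\text{H},{[\cdot]}^{-1})$-graph as required. I do not expect a serious obstacle here: the entire content is packaged into Theorem \ref{konstrukcja (H,sigma) grafow}, and the only genuinely new observation is the elementary order-parity argument that inversion has no nontrivial fixed points in a group of odd order. The one point to state carefully is that the isomorphism $\psi$ from the definition of an $(\text{H},\sigma)$-graph intertwines $\gamma$ with $\sigma$, so that fixed points of $\gamma$ correspond exactly to fixed points of inversion in $\text{H}$; this is immediate from $\sigma = \psi \circ \gamma \circ \psi^{-1}$.
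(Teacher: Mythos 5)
Your proposal is correct and follows essentially the same route as the paper's own proof: apply Theorem \ref{konstrukcja (H,sigma) grafow} with $\sigma = {[\cdot]}^{-1}$, identify $\text{Aut}(\Gamma)$ with $\text{Fix}(\gamma)$, and note that $h^2 = e$ forces $h = e$ in a group of odd order. You are merely more explicit than the paper about two background points (that abelianness makes inversion an automorphism, and that $\psi$ intertwines $\gamma$ with $\sigma$), which is fine but not a different argument.
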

\begin{proof}
    It is enough to apply Theorem \ref{konstrukcja (H,sigma) grafow}. Taking the inverse twice yields identity, and $x^{-1} = \gamma(x) = x$ implies $x^2 = e$. Since $\text{H}$ has an odd order, it implies that $x = e$. Therefore the group ${\text{Aut}}(\Gamma_{(\text{H},{ [\cdot] }^{-1})})$ is in fact trivial.
\end{proof}

This ends the description of $\alpha$-automorphic graphs. We have now necessary tools to prove the following theorem about rarity of graphs with nontrivial TF-automorphism group among all graphs with a given number of vertices.

\begin{thm}
    Let $n \in \mathbb{Z}_{>0}$ be a positive integer, and $\Gamma$ be a graph randomly chosen from a set of $2^{n \choose 2}$ graphs with $n$ vertices. Assume that each graph can be chosen with the same probability. Let $\mathcal{P}(n)$ denote the probability that a graph of $\Gamma$ with $n$ vertices have nontrivial $\text{Aut}^{\pi}(\Gamma_0)$ group. Then
    $$
    \lim_{n \to \infty} \mathcal{P}(n) = 0.
    $$
\end{thm}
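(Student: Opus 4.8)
The plan is to show that almost every graph on $n$ vertices is \emph{asymmetric with trivial two-fold automorphism group}, which by the characterization results of this paper reduces to controlling a concrete structural property. The key observation is that if $\Gamma$ is reduced and $\text{Aut}^{\pi}(\Gamma) \neq \{Id\}$, then either $\text{Aut}(\Gamma) \neq \{Id\}$ (a nontrivial genuine automorphism exists) or $\Gamma$ is $\alpha$-automorphic, in which case by Theorem \ref{THM: charakteryzacja alpha-automorficznych} there is a nontrivial $\pi_0 \in \text{Im}(\alpha)$ whose orbits induce cocliques. First I would reduce to the reduced case: by Corollary \ref{COR: parzystosc Aut taka sama jak parzystosc Aut^pi} and the definitions, a nonreduced graph has two vertices with identical neighbourhoods, and a standard first-moment (union bound) computation shows the expected number of such \emph{twin} pairs is $O(n^2 \cdot 2^{-(n-2)}) \to 0$; hence almost surely $\Gamma$ is reduced, and on this event $\text{Aut}^{\pi}(\Gamma) \neq \{Id\}$ forces the existence of a nontrivial permutation $\pi_0$ that either lies in $\text{Aut}(\Gamma)$ or satisfies equivalence (\ref{ROWNANIE: gamma a krawedzie}) with $\gamma(\pi_0) = \pi_0^{-1}$.

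Next I would bound the probability of admitting any such nontrivial permutation by a first-moment argument over all candidate permutations. For a fixed permutation $\pi_0$ with some number $t$ of nonfixed points, the event ``$(\pi_1,\pi_2) = (\pi_0,\gamma(\pi_0)) \in \text{Aut}^{\text{TF}}(\Gamma)$'' imposes, via the defining condition $(v,w) \in E \Leftrightarrow (\pi_0(v),\pi_0^{-1}(w)) \in E$, a large family of linear constraints pairing up the $\binom{n}{2}$ independent edge-indicator random variables. The standard argument (going back to the Erdős--Rényi-type counts for ordinary graph automorphisms) is that each nontrivial constraint class identifies two distinct potential edges that must agree, so each such class halves the number of free bits; a permutation moving $t$ points forces on the order of $t \cdot n / 4$ independent equalities, each satisfied with probability $1/2$. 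Summing $\sum_{\pi_0 \neq Id} 2^{-c\, t(\pi_0) n}$ over all permutations, grouped by $t = |\{v : \pi_0(v) \neq v\}|$, gives a bound like $\sum_{t \geq 2} \binom{n}{t} t! \, 2^{-c t n}$, which tends to $0$ because the combinatorial factor $\binom{n}{t} t! \leq n^t$ is dwarfed by the exponential savings $2^{-ctn}$.

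The main obstacle, and the step I would spend the most care on, is making the counting of forced edge-equalities rigorous and uniform across \emph{both} the $\text{Aut}(\Gamma)$ case and the two-fold case. In the ordinary automorphism situation $(\pi_1,\pi_2)=(\pi_0,\pi_0)$ one pairs edge $\{v,w\}$ with $\{\pi_0(v),\pi_0(w)\}$; in the two-fold situation the relevant pairing uses the \emph{ordered} condition relating $(v,w)$ to $(\pi_0(v),\pi_0^{-1}(w))$, so one must verify that the induced partition of ordered pairs into orbits still has the vast majority of its blocks of size at least two whenever $\pi_0 \neq Id$, and count the resulting constraints carefully. The subtlety is that orbit sizes depend on the cycle structure of $\pi_0$, so I would split into cases according to whether $\pi_0$ has a short cycle (where many pairs are quickly constrained) or is, say, a single long cycle (where one must check that the ordered-pair constraints are still plentiful and mostly nontrivial); in each case at least $c\,tn$ independent bits are fixed for a suitable constant $c > 0$. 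Once this bound is established uniformly, the union bound over $\pi_0 \neq Id$ closes the argument, yielding $\mathcal{P}(n) = O(\text{a sum tending to } 0)$ and hence $\lim_{n \to \infty} \mathcal{P}(n) = 0$.
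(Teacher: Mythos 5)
Your proposal is correct in outline, but it takes a genuinely different route from the paper. You share the same starting split (nontrivial $\text{Aut}^{\pi}$ means either a nontrivial genuine automorphism or an $\alpha$-automorphic graph, where Theorem \ref{THM: charakteryzacja alpha-automorficznych} supplies a nontrivial $\pi_0$ with $\gamma(\pi_0)=\pi_0^{-1}$, hence $(\pi_0,\pi_0^{-1})\in\text{Aut}^{\text{TF}}(\Gamma)$), but from there you run a self-contained first-moment computation: a union bound over all nontrivial $\pi_0$, with the probability for each fixed $\pi_0$ estimated by counting orbit constraints that the condition $(v,w)\in E \Leftrightarrow (\pi_0(v),\pi_0^{-1}(w))\in E$ imposes on the $\binom{n}{2}$ edge indicators. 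The paper instead never counts constraints for the two-fold condition at all: it constructs an explicit injection from the set $G_{\alpha}$ of pairs $(\pi_0,\Gamma)$ with $\pi_0\in\text{Im}(\alpha)$ nontrivial into the set $G_A$ of pairs $(\pi_0,\Gamma')$ with $\pi_0$ an ordinary automorphism --- using Corollary \ref{COR: brak krawedzi w orbitach alphy} (orbits are cocliques) and the relation $(\pi_0^k(u),v)\in E_{\Gamma'} \Leftrightarrow (u,\pi_0^{-k}(v))\in E_{\Gamma}$ to ``straighten'' the inter-orbit edges --- and then invokes the Erd\H{o}s--R\'enyi bound $\sum_{q\geq 2} A_{n,q}B_{n,q}C_{n,q}/2^{\binom{n}{2}}\to 0$ from \cite{zbMATH03192673} as a black box for both halves. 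Your approach buys independence from that citation and handles both the $\text{Aut}$ case and the TF case by one uniform counting scheme; the paper's injection buys brevity and sidesteps exactly the delicate step you flag. For reassurance, the step you were most worried about does close cleanly: writing the constraint system as links $\{v,w\}\mapsto\{\pi_0(v),\pi_0^{-1}(w)\}$ and $\{v,w\}\mapsto\{\pi_0(w),\pi_0^{-1}(v)\}$ on unordered pairs, the singleton (unconstrained) components are precisely the pairs of two fixed points and the pairs $\{v,\pi_0(v)\}$ with $v$ on a $2$-cycle, so a permutation moving $t$ points leaves at most $\binom{n-t}{2}+t/2$ free singletons; the remaining at least $\binom{n}{2}-\binom{n-t}{2}-t/2 \geq t(n-2)/2$ pairs sit in components of size at least two, yielding at least $t(n-2)/4$ forced equalities and a per-permutation probability at most $2^{-t(n-2)/4}$, after which $\sum_{t\geq 2} n^t\, 2^{-t(n-2)/4}\to 0$ exactly as you claim --- so no case split by cycle type is even needed. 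One cosmetic remark: your appeal to Corollary \ref{COR: parzystosc Aut taka sama jak parzystosc Aut^pi} in the reduction to reduced graphs is unnecessary (twins already give a nontrivial automorphism, so nonreduced graphs can simply be absorbed into the $\text{Aut}(\Gamma)\neq\{Id\}$ case), but this does not affect correctness.
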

\begin{proof}
    In \cite[Theorem 2]{zbMATH03192673} Erdos and Renyi, proved that for every $\varepsilon > 0$
    $$
    \lim_{n \to \infty} \quad \sum_{q = 2}^{n} \text{ } \frac{A_{n,q} \cdot B_{n,q} \cdot  C_{n,q} }{2^{n \choose 2}} = 0,
    $$
    where $A_{n,q}$ is the number of permutations of $n$ vertices which have exactly $n-q$ fixed points and $B_{n,q}$ is the upper bound of the number of graphs $\Gamma_0$ which are invariant under chosen permutation of $n$ vertices with $n-q$ fixed points. $C_{n,q}$ denotes an upper bound on number of graphs ${\Gamma_0}'$ which can be transformed into $\Gamma_0$ by changing $ m \leq \frac{n}{2} \cdot (1- \varepsilon)$ edges into nonedges or vice versa ($m$ is arbitrary, not fixed for $C_{n,q}$), and cannot be transformed into any other graph with nontrivial automorphism group with fewer changes than $m$.
    \newline
    By a single change we understand changing edge into nonedge or nonedge into an edge. We have the following equality
    $$
    \mathcal{P}(n) = \mathcal{P}_A(n) + \mathcal{P}_{\alpha}(n),
    $$
    where $\mathcal{P}_A(n)$ is the probability that a random graph on $n$ vertices have nontrivial automorphism group, and $\mathcal{P}_{\alpha}(n)$ is the probability that a random graph on $n$ vertices is an $\alpha$-automorphic graph. We denote the set of all labeled graphs with $n$ vertices by $\mathcal{G}_n$. In the first case we have
    $$
    \mathcal{P}_A(n) \cdot 2^{n \choose 2} \leq |\{ (\pi,\Gamma) \mid \Gamma \in \mathcal{G}_{n}, \text{ } \pi \neq Id \text{ and } \pi \in \text{Aut}(\Gamma) \}| \leq \sum_{q = 2}^{n} \text{ } A_{n,q} \cdot B_{n,q}.
    $$
    First inequality follows from the fact that any graph with nontrivial automorphism group is counted at least once in the set shown in the middle. Second one follows directly from definitions of $A_{n,q}$ and $B_{n,q}$. Inequality above shows, that 
    $$
    0 \leq \lim_{n \to \infty} \mathcal{P}_A(n) \leq \lim_{n \to \infty} \sum_{q = 2}^{n} \text{ } \frac{A_{n,q} \cdot B_{n,q} \cdot  C_{n,q} }{2^{n \choose 2}} = 0,
    $$
    because $C_{n,q} \geq 1$. It is true due to the fact that if a permutation $\pi_0$ and graph $\Gamma_0$ are fixed, then $\Gamma_0$ is itself one of the graphs counted in $C_{n,q}$. 
    \newline
    We denote the set of all reduced labeled graphs with $n$ vertices by $\mathcal{G}_{\text{red},n}$. For the second case let us consider sets 
    $$
    G_{\alpha} = \{ (\pi_0,\Gamma) \mid \Gamma \in \mathcal{G}_{\text{red},n}, \text{ } \pi_0 \neq Id \text{ and } \pi_0 \in \text{Aut}^{\pi}(\Gamma) \text{ and } \pi_0 \in \text{Im}(\alpha) \}
    $$
    $$
    \text{and} \quad G_{A} = \{ (\pi_0,\Gamma) \mid \Gamma \in \mathcal{G}_{n}, \text{ } \pi_0 \neq Id \text{ and } \pi_0 \in \text{Aut}(\Gamma) \}.
    $$
    We will now construct an injection from $G_{\alpha}$ to $G_{A}$. 
    First, let us pick permutation $\pi_0$ and let $\mathcal{O}$ be the set of orbits. Choose any linear order $\prec$ on $\mathcal{O}$ and pick one element from each orbit. Let $(\pi_0,\Gamma) \in G_{\alpha}$. We will map $(\pi_0,\Gamma)$ to $(\pi_0,\Gamma')$ where $\Gamma'$ is created with following procedure. If $v,w \in O \in \mathcal{O}$, then they are not connected by an edge in $\Gamma$ by Corollary \ref{COR: brak krawedzi w orbitach alphy}, and we do not change that in $\Gamma'$. Let $O_1 \prec O_2$, $u_1 \in O_1$ be an element from $O_1$, and $v \in O_2$. For any $k \in \mathbb{Z}$, we have $({\pi_0}^k(u_1),v) \in E_{\Gamma'}$, if and only if $(u_1,{\pi_0}^{-k}(v)) \in E_{\Gamma}$. Therefore $\pi_0 \in \text{Aut}(\Gamma')$, so $(\pi_0,\Gamma') \in G_{A}$. Our transformation is a bijection, because edges in $\Gamma$ between orbits of $\pi_0$ are determined by edges from one chosen vertex. This due to the fact that $\gamma(\pi_0) = {\pi_0}^{-1}$ and Observation \ref{OBS wlasnosci alphy i elementow odwracanych przez gamme}. Therefore, we get
    $$
    \mathcal{P}_{\alpha}(n) \cdot  2^{n \choose 2} \leq |G_{\alpha}| \leq |G_{A}| \leq \sum_{q = 2}^{n} \text{ } A_{n,q} \cdot B_{n,q}.
    $$
    First inequality comes from the fact that every $\alpha$-automorphic graph have at least one nontrivial two-fold automorphism projection, second one from the injection constructed above, and the third one from definitions of $A_{n,q}$ and $B_{n,q}$. Once again using the fact that $C_{n,q} \geq 1$, we get the following
    $$
    0 \leq \lim_{n \to \infty} \mathcal{P}_{\alpha}(n) \leq \lim_{n \to \infty} \sum_{q = 2}^{n} \text{ } \frac{A_{n,q} \cdot B_{n,q} \cdot  C_{n,q} }{2^{n \choose 2}} = 0.
    $$
    Therefore we get $\lim_{n \to \infty} \mathcal{P}(n) = \lim_{n \to \infty} \mathcal{P}_{A}(n) + \lim_{n \to \infty} \mathcal{P}_{\alpha}(n) = 0+0 = 0$.

\end{proof}

As one can easily observe, this proof is very wasteful, hence we formulate following conjectures.

\begin{conj} \label{CONJ rzadko da sie przeksztalcic graf w taki z nietrywialna Aut^TF}
    Let $\varepsilon > 0$ and $\mathcal{P}_n(\varepsilon)$ denote the probability, that a random labeled graph $\Gamma$ on $n$ vertices can be transformed to a graph $\Gamma'$ with nontivial $\text{Aut}^{\pi}(\Gamma')$ group by changing at most $\frac{n}{2} \cdot (1 - \varepsilon)$ edges and nonedges. Then
    $$
    \lim_{n \to \infty} \mathcal{P}_n(\varepsilon) = 0.
    $$
\end{conj}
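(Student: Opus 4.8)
The plan is to follow the template of the theorem immediately preceding this conjecture and to reduce everything to the Erd\H{o}s--R\'enyi estimate \cite[Theorem 2]{zbMATH03192673}. Write $d = \frac{n}{2}(1-\varepsilon)$ for the allowed number of changes. If a graph $\Gamma$ can be transformed into some $\Gamma'$ with nontrivial $\text{Aut}^{\pi}(\Gamma')$ using at most $d$ changes, then either $\Gamma'$ is not reduced --- in which case $\Gamma'$ already has a nontrivial ordinary automorphism, obtained by transposing two vertices with equal neighbourhoods --- or $\Gamma'$ is reduced and, by the dichotomy underlying Theorem \ref{THM: charakteryzacja alpha-automorficznych}, either $\text{Aut}(\Gamma') \neq \{Id\}$ or $\Gamma'$ is $\alpha$-automorphic. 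Hence $\mathcal{P}_n(\varepsilon) \le \mathcal{P}_n^{A}(\varepsilon) + \mathcal{P}_n^{\alpha}(\varepsilon)$, where $\mathcal{P}_n^{A}(\varepsilon)$ is the probability of being $d$-close to a graph with nontrivial automorphism group and $\mathcal{P}_n^{\alpha}(\varepsilon)$ the probability of being $d$-close to an $\alpha$-automorphic graph. I would bound the two summands separately, exactly as the two cases are separated in the preceding theorem.

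The term $\mathcal{P}_n^{A}(\varepsilon)$ requires no new work. The quantity $C_{n,q}$ of \cite{zbMATH03192673} is \emph{by definition} an upper bound for the number of graphs obtainable from a fixed automorphism-admitting graph $\Gamma_0$ by at most $d$ changes (and not reachable from a closer one), so every graph counted in $\mathcal{P}_n^{A}(\varepsilon)$ is recorded in some triple $(\pi,\Gamma_0,\Gamma)$ enumerated by $A_{n,q}\cdot B_{n,q}\cdot C_{n,q}$. Consequently $\mathcal{P}_n^{A}(\varepsilon)\cdot 2^{n \choose 2} \le \sum_{q=2}^{n} A_{n,q}\,B_{n,q}\,C_{n,q}$, and the cited theorem gives $\mathcal{P}_n^{A}(\varepsilon)\to 0$.

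The substance of the argument is the $\alpha$-case. Here I would fix a candidate witness $\pi_0$ of odd order moving exactly $q$ points; by Theorem \ref{THM: charakteryzacja alpha-automorficznych} such a $\pi_0$ satisfies $\gamma(\pi_0)=\pi_0^{-1}$ and induces cocliques on its orbits. For fixed $\pi_0$, the edge-twisting bijection built in the proof of the preceding theorem out of Corollary \ref{COR: brak krawedzi w orbitach alphy} and Observation \ref{OBS wlasnosci alphy i elementow odwracanych przez gamme} identifies the $\alpha$-automorphic graphs admitting $\pi_0$ as a witness with the $\pi_0$-invariant graphs; in particular their number is at most $B_{n,q}$. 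It then remains to produce an $\alpha$-analogue $\tilde{C}_{n,q}$ of $C_{n,q}$, bounding the number of graphs lying within $d$ changes of a fixed $\alpha$-automorphic graph (and not closer to another), so as to obtain $\mathcal{P}_n^{\alpha}(\varepsilon)\cdot 2^{n \choose 2} \le \sum_{q=2}^{n} A_{n,q}\,B_{n,q}\,\tilde{C}_{n,q}$ with $\tilde{C}_{n,q}$ satisfying the same bound as $C_{n,q}$.

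The main obstacle is exactly this last estimate. The twisting map is a bijection only between the two \emph{constrained} families ($\alpha$-witnessed graphs and $\pi_0$-invariant graphs); it is not a coordinate permutation of all $2^{n \choose 2}$ labelled graphs, so it does \emph{not} transport the full edit-ball around an $\alpha$-automorphic graph onto the edit-ball around a symmetric one, and one cannot simply inherit the Erd\H{o}s--R\'enyi bound. Instead the minimum-change counting of \cite{zbMATH03192673} must be redone under the anti-diagonal constraint $\gamma(\pi_0)=\pi_0^{-1}$ that governs the between-orbit edges of $\alpha$-automorphic graphs, in place of the diagonal constraint defining ordinary invariance. Since in both cases the edges between two orbits are determined by a single chosen vertex, the split into ``free'' and ``forced'' edge-orbits is structurally parallel, and I expect the number of forced changes to remain $\Theta(q)$ with the same leading constant; verifying this --- together with controlling graphs that are simultaneously $d$-close to several $\alpha$-automorphic graphs with different witnesses, and confirming that $\frac{n}{2}(1-\varepsilon)$ stays strictly below the forced-change threshold --- is the crux, and is presumably the reason the statement is recorded here only as a conjecture.
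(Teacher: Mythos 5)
You should first be aware that the paper does not prove this statement at all: it is stated as Conjecture \ref{CONJ rzadko da sie przeksztalcic graf w taki z nietrywialna Aut^TF}, and the authors' only comment is that a proof ``should proceed similar to the proof of \cite[Theorem 2]{zbMATH03192673}, however upper bound on $C_{n,q}$ might be different and have different proof.'' Your proposal follows exactly this intended plan, and the parts you actually carry out are sound and parallel the theorem preceding the conjecture: the split $\mathcal{P}_n(\varepsilon) \le \mathcal{P}^{A}_n(\varepsilon) + \mathcal{P}^{\alpha}_n(\varepsilon)$ (with non-reduced targets correctly absorbed into the $A$-case via a twin transposition); the bound $\mathcal{P}^{A}_n(\varepsilon)\cdot 2^{n \choose 2} \le \sum_{q=2}^{n} A_{n,q} B_{n,q} C_{n,q}$, which is legitimate because $C_{n,q}$ in \cite{zbMATH03192673} is indeed defined to count graphs at edit distance up to $\frac{n}{2}(1-\varepsilon)$ from a symmetric graph, so the $A$-part genuinely follows from the cited theorem; and the use of the paper's twisting injection (built from Corollary \ref{COR: brak krawedzi w orbitach alphy} and Observation \ref{OBS wlasnosci alphy i elementow odwracanych przez gamme}) to bound by $B_{n,q}$ the number of $\alpha$-automorphic graphs admitting a fixed witness $\pi_0$ with $\gamma(\pi_0)=\pi_0^{-1}$.

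The genuine gap is precisely the one you name yourself, and naming it does not close it: the estimate $\tilde{C}_{n,q}$ is the entire content of the conjecture beyond what the paper already proves. Your diagnosis of why the gap is real is accurate: for fixed $\pi_0$ and fixed orbit representatives the twisting map acts, in effect, as a permutation of the between-orbit edge slots, but the within-orbit slots are treated asymmetrically (forced cocliques on the $\alpha$-side, free subject to invariance on the automorphism side), so it is not an isometry of the Hamming metric on all $2^{n \choose 2}$ labelled graphs and does not transport edit-balls; hence the minimum-change threshold analysis of Erd\H{o}s--R\'enyi --- exactly where the bound $\frac{n}{2}(1-\varepsilon)$ enters --- must be redone under the anti-diagonal constraint $\gamma(\pi_0)=\pi_0^{-1}$. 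Your expectation that the forced-change count remains $\Theta(q)$ with the same leading constant, and that graphs close to several $\alpha$-automorphic graphs can be controlled as in the original minimality argument, is plausible but unverified, and without it $\mathcal{P}^{\alpha}_n(\varepsilon)\to 0$ does not follow. So the verdict is: a correct reduction and a correct identification of the obstruction, matching the paper's own (unexecuted) strategy, but not a proof --- the statement remains exactly as open after your argument as it is in the paper.
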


\begin{conj} \label{CONJ: wartosc oczekiwana Aut^pi = 1}
    Let $\Gamma$ be a random labeled graphs on $n$ vertices. Then
    $$
    \lim_{n \to \infty} \mathbb{E}(|\text{Aut}^{\pi}(\Gamma)|) = 1,
    $$
    where by $\mathbb{E}$ we denote the expected value of some function over $\mathcal{G}_n$.
\end{conj}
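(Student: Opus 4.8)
The plan is to prove the conjecture by the first-moment (expectation-counting) method, in the same spirit as the preceding theorem but retaining the full cardinality rather than merely the probability of nontriviality. Modeling the uniform random graph as $\binom{n}{2}$ independent fair coins and using $\mathrm{Aut}^{\pi}(\Gamma)\cong\mathrm{Aut}^{\mathrm{TF}}(\Gamma)$ for reduced $\Gamma$ (Proposition \ref{PROP: Aut^TF jest izo z Aut^pi}), I would start from
$$
\mathbb{E}(|\mathrm{Aut}^{\pi}(\Gamma)|)\;=\;\sum_{(\pi_1,\pi_2)\in\mathrm{Sym}(V)^2}\Pr\big[(\pi_1,\pi_2)\in\mathrm{Aut}^{\mathrm{TF}}(\Gamma)\big],
$$
in which the pair $(\mathrm{Id},\mathrm{Id})$ contributes exactly $1$. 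Since membership $(\pi_1,\pi_2)\in\mathrm{Aut}^{\mathrm{TF}}(\Gamma)$ is the linear condition $A_{vw}=A_{\pi_1(v)\pi_2(w)}$ on the symmetric, zero-diagonal adjacency matrix $A$ over $\mathbb{F}_2$, each summand equals $2^{-r(\pi_1,\pi_2)}$, where $r$ is the number of independent constraints. The conjecture then reduces to showing that the sum over all pairs other than $(\mathrm{Id},\mathrm{Id})$ tends to $0$ (note that the diagonal pairs $(\pi,\pi)$ by themselves reproduce the classical Erd\H{o}s--R\'enyi estimate $\mathbb{E}(|\mathrm{Aut}(\Gamma)|)\to1$).

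The heart of the argument is a lower bound of the form $r(\pi_1,\pi_2)\ge c\,n q$, where $q=|\mathrm{supp}(\pi_1)\cup\mathrm{supp}(\pi_2)|$ and $c>0$ is absolute. I would prove it in two regimes. For $q\le n/2$ I use a fixed-coordinate count: for each column $w$ fixed by $\pi_2$ (there are at least $n-q\ge n/2$ of them) the equations $A_{vw}=A_{\pi_1(v)w}$ force $A$ to be constant along each nontrivial $\pi_1$-orbit, yielding at least $|\mathrm{supp}(\pi_1)|/2$ independent constraints per column, and symmetrically for $\pi_2$ along fixed rows; taking the larger side and using $\max(|\mathrm{supp}\,\pi_1|,|\mathrm{supp}\,\pi_2|)\ge q/2$ gives $r\ge nq/8$. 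For $q>n/2$ I would instead bound the dimension of the symmetric solution space by (essentially half of) the number of orbits of $\langle(\pi_1,\pi_2)\rangle$ on $V\times V$, namely $\tfrac12\sum_{i,j}\gcd(a_i,b_j)$ over the cycle lengths $a_i$ of $\pi_1$ and $b_j$ of $\pi_2$; a Cauchy--Schwarz estimate gives $\sum_{i,j}\gcd(a_i,b_j)\le n\sqrt{c_1c_2}$ with $c_\ell$ the number of cycles, and since a permutation moving $q$ points has at most $n-q/2$ cycles, this pushes the solution dimension below $\binom{n}{2}$ by $\Omega(nq)=\Omega(n^2)$, giving $r\ge cnq$ again.

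With $r\ge cnq$ in hand the summation is routine: the number of pairs with $q=|\mathrm{supp}(\pi_1)\cup\mathrm{supp}(\pi_2)|$ is at most $\binom{n}{q}(q!)^2\le n^{2q}$, whence
$$
\sum_{(\pi_1,\pi_2)\ne(\mathrm{Id},\mathrm{Id})}2^{-r}\;\le\;\sum_{q\ge 2}n^{2q}\,2^{-cnq}\;=\;\sum_{q\ge 2}\big(n^{2}2^{-cn}\big)^{q}\longrightarrow 0,
$$
which is exactly $\mathbb{E}(|\mathrm{Aut}^{\pi}(\Gamma)|)-1\to0$. I would then dispose of the gap between $\mathrm{Aut}^{\pi}$ (defined only for reduced graphs) and $\mathrm{Aut}^{\mathrm{TF}}$: the expected number of nontrivial twin classes, together with the $(t!)^2$-type contribution a class of $t$ mutual twins makes to $\mathrm{Aut}^{\mathrm{TF}}$, is itself $O(n^2 2^{-n})$, so passing to reduced graphs (where the two groups agree) changes the expectation by only $o(1)$.

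The step I expect to be the genuine obstacle is the orbit bound in the large-support regime $q>n/2$. When $\pi_1\ne\pi_2$ the transpose symmetry $A=A^{T}$ does not commute with the map $(v,w)\mapsto(\pi_1 v,\pi_2 w)$ — conjugating by the coordinate swap sends it to $(v,w)\mapsto(\pi_2 v,\pi_1 w)$ — so one cannot simply halve the orbit count of $\langle(\pi_1,\pi_2)\rangle$ to read off the symmetric solution dimension. Making the $\Omega(n^2)$ constraint bound fully rigorous here, while correctly accounting for both the symmetry of $A$ and the forced zeros on the diagonal, is the delicate part; the $q\le n/2$ regime and the final summation are comparatively mechanical.
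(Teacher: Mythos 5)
You are attempting to prove a statement that the paper itself does not prove: this is Conjecture \ref{CONJ: wartosc oczekiwana Aut^pi = 1}, explicitly left open, and the paper only gestures at a different plan of attack (first choose $\pi_1$ with $\gamma(\pi_1)={\pi_1}^{-1}$, then an automorphism $\pi_2$, and bound the number of graphs admitting both — i.e.\ exploit the coset structure from Theorem \ref{THM: alpha jest stala na prawostronnych warstwach Aut}). So your proposal must stand entirely on its own, and as written it does not: it is a program with the decisive lemma missing. Your global architecture is sound and is the natural one — the first-moment sum $\mathbb{E}(|\text{Aut}^{\text{TF}}(\Gamma)|)=\sum_{(\pi_1,\pi_2)}2^{-r(\pi_1,\pi_2)}$, the reduction of the conjecture to $r(\pi_1,\pi_2)\ge c\,nq$ for $(\pi_1,\pi_2)\ne(\text{Id},\text{Id})$, and the closing geometric summation are all correct in outline, and your treatment of the non-reduced correction (twin classes are exponentially rare, so passing between $\text{Aut}^{\pi}$ and $\text{Aut}^{\text{TF}}$ costs $o(1)$) is fine in principle since it is subsumed by the same master sum.

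The genuine gap is exactly where you flag it, and flagging it does not fill it. In the regime $q>n/2$ the quantity you must bound is the number of orbits, on $V\times V$ minus the diagonal-killed ones, of the group generated by $T:(v,w)\mapsto(\pi_1(v),\pi_2(w))$ \emph{and} the swap $S:(v,w)\mapsto(w,v)$, because that orbit count \emph{is} the dimension of the symmetric solution space. Each $\langle T,S\rangle$-orbit is a union of one or two $T$-orbits, so ``half the number of $T$-orbits'' is a \emph{lower} bound for this dimension, not the upper bound your argument needs; the obstruction is precisely the self-paired $T$-orbits (those invariant under $S$), whose number you must show is $O(n)$ or otherwise negligible, uniformly over pairs $\pi_1\ne\pi_2$ — and nothing in the proposal does this (note that when $\pi_2=\pi_1^{-1}$, which by Observation \ref{OBS wlasnosci alphy i elementow odwracanych przez gamme} is the structurally relevant case $\gamma(\pi_1)=\pi_1^{-1}$, self-paired orbits arise naturally, so this case cannot be waved away). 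Your Cauchy--Schwarz chain $\sum_{i,j}\gcd(a_i,b_j)\le(\sum_i\sqrt{a_i})(\sum_j\sqrt{b_j})\le n\sqrt{c_1c_2}$ is valid, but it only controls the $T$-orbit count, which without the self-pairing control compares uselessly against $\binom{n}{2}$. Secondarily, in the regime $q\le n/2$ your ``at least $|\text{supp}(\pi_1)|/2$ independent constraints per fixed column'' is not yet a rank bound over $\mathbb{F}_2$: once $A=A^{T}$ identifies $A_{vw}$ with $A_{wv}$, constraints from a fixed column $w\in\text{supp}(\pi_1)$ can collide with row constraints through the transpose, so the cross-column independence must be argued (this is fixable with a constant-factor loss, but it is currently asserted, not proved). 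Until the self-paired-orbit count is controlled, the proposal establishes nothing beyond what the paper already records as a conjecture.
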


Proof of Conjecture \ref{CONJ rzadko da sie przeksztalcic graf w taki z nietrywialna Aut^TF} should proceed similar to the proof of \cite[Theorem 2]{zbMATH03192673}, however upper bound on $C_{n,q}$ might be different and have different proof.
\newline
One may also try to prove Conjecture \ref{CONJ: wartosc oczekiwana Aut^pi = 1} by first choosing $\pi_1$ such that $\gamma(\pi_1) = {\pi_1}^{-1}$, automorphism $\pi_2$ and then bound a number of graphs containing both of them.

\section{TF-isomorphisms of graphs via adjacency matrices} \label{SECTION: TF-isomorphisms of graphs via adjacency matrices}

In this Section we will study two-fold isomorphisms between graphs. This notion is equivalent to the fact that two graphs have isomorphic canonical double covers.

\begin{defi}
    We say that graphs $\Gamma_1 = (\text{V}_1,\text{E}_1)$ and $\Gamma_2 = (\text{V}_2,\text{E}_2)$ are \textit{two-fold isomorphic}, if and only if there exists maps $(\pi_1,\pi_2)$ which are bijections from $\text{V}_1$ to $\text{V}_2$ that the following holds
    $$
    \forall_{v,w \in \text{V}_1} \quad (v,w) \in \text{E}_1 \quad \Longleftrightarrow \quad (\pi_1(v),\pi_2(w)) \in \text{E}_2.
    $$
\end{defi}

To change two-fold isomorphism into an isomorphism of double covers one should apply action of $\pi_1$ on one bipartite part and action of $\pi_2$ on the second part.

\begin{defi}
    Let $n \in \mathbb{Z}_{>0}$ be any positive integer, and $q \in \text{Sym}(\{1,\ldots,n\})$ be a permutation of the set $\{1,\ldots,n\}$. Then the \textit{permutation matrix} of $q$ is a $n \times n$ matrix that that for every $i \in \{1,\ldots,n\}$, $i$-th collumn is equal to $e_{q(i)}$, that is the $q(i)$-th element of standard basis. We denote this matrix by $m_q$.
\end{defi}

    Set $\text{Ant}(\Gamma)$ defined below will play an important role in identifying nonisomorphic graphs TF-isomorphic to a given one.
\begin{defi}
    Let $\Gamma$ be a reduced graph. Then by the set of \textit{antymorphisms} we understand
    $$
    \text{Ant}(\Gamma) = \{ \pi_0 \in \text{Aut}^{\pi}(\Gamma) \mid \gamma(\pi_0) = {\pi_0}^{-1} \}.
    $$
\end{defi}
    Zelinka \cite{zbMATH03559606} and Manrusic, Scapellato and Salvi in \cite{zbMATH04108921} proves several results of similar nature to two propositions below.

\begin{defi}
     Let $\Gamma=(V,E)$ be a graph with $n$ vertices and $\psi: V \rightarrow \{1,\ldots,n\}$ be a labeling of vertices. We define an adjacency matrix to be a $n \times n$ 0-1 matrix $A = {(a_{i,j})}_{i,j \leq n}$ such that $a_{\psi(v),\psi(w)} = 1 \Longleftrightarrow (v,w) \in E$.
     When choice of labeling will not matter, \textit{adjacency matrix of a graph} $\Gamma$ is an adjacency matrix with any labeling.
     \newline
     Moreover, for any $V_0 \subseteq V$ we define the \text{characteristic vector}, with respect to labeling $\psi$, of the set $V_0$ to be a 0-1 vector ${(a_i)}_{i \leq n}$ such that $a_{\psi(v)} = 1 \Longleftrightarrow v \in V_0$.  
\end{defi}

\begin{prop}
 \label{PROP: Aut^TF zdefiniowane macierzami}
    Let $\Gamma$ be a reduced graph, and let $A$ be its' adjacency matrix. Then $p \in \text{Sym}(\{1,\ldots,n\})$ is in $\text{Aut}^{\pi}(\Gamma)$ if and only if there exists such $q \in \text{Sym}(\{1,\ldots,n\})$ that
    $$
    m_p A = A m_q.
    $$
    Moreover, if such $q$ exists, then $q = \gamma(p)$.
\end{prop}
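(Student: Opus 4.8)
The plan is to reduce the matrix identity $m_p A = A m_q$ to an equivalent statement about neighbourhoods and then invoke the machinery around $\gamma$ already developed. The starting observation is that, under the chosen labeling, the $j$-th column of $A$ is precisely the characteristic vector $\chi_{N(j)}$ of the neighbourhood $N(j)$, since $A_{i,j}=1$ iff $i\in N(j)$. I would then compute both sides of the identity column by column. Using the convention that the $i$-th column of $m_q$ is $e_{q(i)}$, right multiplication permutes columns: the $j$-th column of $A m_q$ is $A e_{q(j)}$, i.e.\ the $q(j)$-th column of $A$, which is $\chi_{N(q(j))}$. Left multiplication by $m_p$ pushes a set forward by $p$: for any set $S$ one has $(m_p \chi_S)_k = \chi_S(p^{-1}(k)) = [\,k\in p(S)\,]$, so $m_p \chi_S = \chi_{p(S)}$, and hence the $j$-th column of $m_p A$ is $\chi_{p(N(j))}$.

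Comparing columns, I would conclude that $m_p A = A m_q$ holds if and only if
$$
p(N(j)) = N(q(j)) \qquad \text{for all } j.
$$
This identity is exactly the bridge to $\text{Aut}^{\pi}(\Gamma)$. For the forward implication, assuming $p \in \text{Aut}^{\pi}(\Gamma)$ I would set $q = \gamma(p)$, which is a genuine permutation by Theorem \ref{THM: kluczowe własności gammy}; Definition \ref{definicja funkcji gamma} then gives $p(N(v)) = N(\gamma(p)(v)) = N(q(v))$ for every $v$, so the displayed condition holds and $m_p A = A m_q$.

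For the converse, from $m_p A = A m_q$ I would read off $p(N(v)) = N(q(v))$ for all $v$; in particular $p(N(v))$ is always a neighbourhood, so $p$ meets the defining condition of a two-fold projection and $p \in \text{Aut}^{\pi}(\Gamma)$. Because $\Gamma$ is reduced, the vertex $w$ with $p(N(v)) = N(w)$ is unique, which forces $q(v) = \gamma(p)(v)$; thus $q = \gamma(p)$, settling the ``moreover'' clause at the same time. The only place that requires genuine care is the first paragraph: the non-standard permutation-matrix convention makes it easy to invert an index, so I would verify the two column computations explicitly. Once those are correct, everything else is a direct unwinding of the definitions of $\text{Aut}^{\pi}(\Gamma)$ and $\gamma$.
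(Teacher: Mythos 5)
Your proof is correct and follows essentially the same route as the paper's: both interpret the columns of $A$ as characteristic vectors of neighbourhoods, observe that $m_p A$ pushes each neighbourhood forward by $p$ while $A m_q$ permutes the columns, and use reducedness to get uniqueness of the column permutation and hence $q = \gamma(p)$. Your version merely makes explicit the column computations ($m_p \chi_S = \chi_{p(S)}$ and the $q(j)$-th column extraction) that the paper's terse proof leaves implicit, which is a welcome verification given the stated permutation-matrix convention.
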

\begin{proof}
    To see this let us understand $i$-th column as the characteristic vector of $N(i)$. Then, since $p$ is a permutation of vertices, it permutes elements in those columns. If after that permutation every column became a copy of some other column form $A$, then every column was mapped to a different one since $\Gamma$ is reduced. This permutation of columns is $q$. If such $q$ exists, it permutes columns, which correspond to neighbourhoods and we get $q = \gamma(p)$.
\end{proof}

\begin{prop} \label{PROP: przetlumaczenie TF-izo na macierze i sasiedztwa}
    Reduced graphs $\Gamma_1 = (\text{V}_1,\text{E}_1)$ and $\Gamma_2 = (\text{V}_2,\text{E}_2)$ are TF-isomorphic if and only if there exists bijection $\pi_0$ from $\text{V}_1$ to $\text{V}_2$ such that
    $$
    \forall_{v \in \text{V}_1} \quad \pi_0 (N(v)) \in N(\Gamma_2).
    $$
    
    If $A_1$ and $A_2$ are adjacency matrices of $\Gamma_1$ and $\Gamma_2$ respectively then those are TF-isomorphic if and only if there exists such permutations $p,q \in \text{Sym}(\{1,\ldots,n\})$ that
    $$
    m_{p} A_1  m_{p}^{-1} = A_2 m_{q}^{-1}.
    $$
\end{prop}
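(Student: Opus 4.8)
The plan is to treat the two assertions in turn, proving the neighbourhood characterization first and then translating it into the matrix identity. Both arguments mirror the corresponding statements for a single graph: the first part is the analogue of Proposition~\ref{PROP: Aut^TF jest izo z Aut^pi}, and the second is the analogue of Proposition~\ref{PROP: Aut^TF zdefiniowane macierzami}.

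For the first equivalence I would argue both implications directly from the definition of two-fold isomorphism. For the forward direction, suppose $(\pi_1,\pi_2)$ is a two-fold isomorphism. Fixing $w\in \text{V}_1$ and reading the defining equivalence $(v,w)\in \text{E}_1 \Leftrightarrow (\pi_1(v),\pi_2(w))\in \text{E}_2$ as a statement about membership in neighbourhoods gives $v\in N(w) \Leftrightarrow \pi_1(v)\in N(\pi_2(w))$, whence $\pi_1(N(w))=N(\pi_2(w))\in N(\Gamma_2)$; thus $\pi_0=\pi_1$ witnesses the stated condition. For the converse, suppose $\pi_0$ sends every neighbourhood of $\Gamma_1$ into $N(\Gamma_2)$. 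Because $\Gamma_2$ is reduced, for each $v$ there is a \emph{unique} vertex $\pi_2(v)$ with $\pi_0(N(v))=N(\pi_2(v))$; since $\pi_0$ is injective and $\Gamma_1$ is reduced, $\pi_2$ is injective, hence a bijection by finiteness. Setting $\pi_1=\pi_0$ and running the membership computation in reverse (using that edges are unordered) shows $(\pi_1,\pi_2)$ is a two-fold isomorphism.

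For the matrix formulation I would use the reading of adjacency matrices established in the proof of Proposition~\ref{PROP: Aut^TF zdefiniowane macierzami}: the $j$-th column of an adjacency matrix is the characteristic vector of $N(j)$, left multiplication by $m_p$ applies the permutation $p$ to the entries of each column (that is, applies $\pi_0$ to each neighbourhood), and right multiplication by a permutation matrix reindexes the columns. Under this dictionary the condition of the first part, that $\pi_0$ carries every neighbourhood of $\Gamma_1$ to a neighbourhood of $\Gamma_2$, says exactly that the columns of $m_p A_1$ are, as a set, the columns of $A_2$. Distinctness of columns (both graphs reduced) upgrades this to a bijection of columns, i.e.\ $m_p A_1 = A_2 m_s$ for some permutation $s$; multiplying on the right by $m_p^{-1}$ and setting $q=p s^{-1}$ yields $m_p A_1 m_p^{-1} = A_2 m_q^{-1}$. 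The converse is obtained by reversing these steps, recovering a valid $\pi_0$ from $p$ together with the column bijection.

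The membership translations are routine; the step to watch is the permutation-matrix bookkeeping in the second part. With the convention that the $i$-th column of $m_q$ is $e_{q(i)}$, one must keep straight that $m_p$ acts on rows while $m_q$ acts on columns and track the inverses carefully, since an error there produces an equation off by an inverse or a transpose. I expect the main (and only mildly delicate) obstacle to be checking that the exponents and inverses in $m_p A_1 m_p^{-1} = A_2 m_q^{-1}$ come out exactly as stated, rather than in a superficially different but equivalent reparametrization of $p,q,s$.
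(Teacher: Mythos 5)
Your proof is correct and follows essentially the same route as the paper's: a neighbourhood-correspondence argument for the first equivalence (with reducedness of $\Gamma_2$ giving the unique $\pi_2$ and reducedness of $\Gamma_1$ upgrading it to a bijection), followed by the same column-as-characteristic-vector dictionary for the matrix identity, and your bookkeeping $q = p\circ s^{-1}$ checks out under the stated convention, for which $m_p m_q = m_{p\circ q}$. The only cosmetic difference is that you verify the first part directly from the definition of TF-isomorphism, whereas the paper phrases it as an isomorphism of the canonical double covers preserving the bipartition classes --- unwound, that is the same argument.
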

\begin{proof}
    For the first part let us consider graphs $\text{B}\Gamma_1$ and $\text{B}\Gamma_2$. It is easy to verify, that $\Gamma_1 {\cong}^{\text{TF}} \Gamma_2$ if and only if there exist a graph isomorphism form $\text{B}\Gamma_1$ to $\text{B}\Gamma_2$ that maps $\text{V}_1 \times \{0\}$ to $\text{V}_2 \times \{0\}$. On the other hand, if we understand vertices from $\text{V}_1 \times \{1\}$ and $\text{V}_2 \times \{1\}$ as neighbourhoods of $V_1$ and $V_2$ respectively, then equivalence to existance of $\pi_0$ becomes obvious, since both $\Gamma_1$ and $\Gamma_2$ are reduced.
    \newline
    Suppose $A_1$ and $A_2$ are coresponding adjacency matrices according to labelings $\psi_1$ and $\psi_2$. Let then $p$ be a permutation of $\{1,\ldots,n\}$ that matches $\pi_0$ after labelings $\psi_1$ and $\psi_2$. If we change the numeration of $V_1$ by applying $p$ we get that adjacency matrix of $\Gamma_1$ with those labels equals $m_{p} A_1  m_{p}^{-1}$. Then map $\pi_0$ from $V_1$ to $V_2$ induces identity permutation of $\{1,\ldots,n\}$. In the rest of the proof we consider only those labels. Now we can understand the $i$-th column of $A_2$ as the characteristic vector of neighbourhood of a vertex labeled by $i$. First, already proven part of this theorem tells us, that since $\Gamma_1$ and $\Gamma_2$ are TF-isomorphic, we have $N(\Gamma_1) = N(\Gamma_2)$. Only possible change is that sets from $N(\Gamma_1)$ can become neighbourhoods of different vertices than before. Therefore $m_{p} A_1  m_{p}^{-1} = A_2 m_{q}^{-1}$, where $q$ is the permutation of graph neighborhoods induced by $\pi_0$ with respect to labels $\psi_1 \circ \pi_0$ and $\psi_2$. Converse follows immediately form the first part of the theorem.
\end{proof}

    It is worth mentioning, that the next theorem was proven in \cite{zbMATH06719978} as Proposition 8. Stating this theorem in language of matrices and image of function $\alpha$ will give us interesting insight which will lead to equations connecting number of special type of antymorphisms with number of automorphisms of TF-isomorphic graphs to a given reduced one.

\begin{thm} \label{TW: kiedy TF-izo sa izo za pomoca Im(alpha)}
    Assume that $\Gamma_1 = (\text{V}_1,\text{E}_1)$ and $\Gamma_2 = (\text{V}_2,\text{E}_2)$ are reduced graphs with $n$ vertices and adjacency matrices $A_1$ and $A_2$ respectively. If there exist $p,q \in \text{Sym}(\{1,\ldots,n\})$ such that $m_{p}^{-1} A_2  m_{p} = A_1 m_{q}$, then $q \in \text{Ant}(\Gamma_1)$ and the following equivalence holds
    $$
    \Gamma_1 \cong \Gamma_2 \quad \Longleftrightarrow \quad q \in \text{Im}({\alpha}_{\Gamma_1}).
    $$
\end{thm}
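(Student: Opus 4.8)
The plan is to push the hypothesis entirely into the matrix language of Proposition \ref{PROP: Aut^TF zdefiniowane macierzami}, and then reduce the isomorphism question to a one-line computation with $\gamma$ and $\alpha$. Throughout I use the convention $m_a m_b = m_{a\circ b}$ and the fact that $A_1,A_2$ are symmetric with $m_p^{-1}=m_p^{T}$.

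First I would establish $q \in \text{Ant}(\Gamma_1)$. Transposing the hypothesis $m_p^{-1} A_2 m_p = A_1 m_q$ and using symmetry of $A_1,A_2$ gives $m_p^{-1} A_2 m_p = m_q^{-1} A_1$, so comparing the two expressions yields $A_1 m_q = m_q^{-1} A_1$, equivalently $m_q A_1 = A_1 m_{q^{-1}}$. By Proposition \ref{PROP: Aut^TF zdefiniowane macierzami}, applied with first permutation $q$ and witnessing permutation $q^{-1}$, this shows $q \in \text{Aut}^{\pi}(\Gamma_1)$ and $\gamma(q) = q^{-1}$, which is exactly the defining condition of $\text{Ant}(\Gamma_1)$.

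For the equivalence, I would rewrite the hypothesis as $A_2 = m_p (A_1 m_q) m_p^{-1}$. Two graphs are isomorphic precisely when their adjacency matrices are permutation-similar, so $\Gamma_1 \cong \Gamma_2$ holds iff there is a permutation $s$ with $A_2 = m_s A_1 m_s^{-1}$; substituting and cancelling $m_p$ on both sides, this is equivalent to the existence of a permutation $r = p^{-1} s$ with $m_r A_1 m_r^{-1} = A_1 m_q$, i.e. $m_r A_1 = A_1 m_{q\circ r}$. Proposition \ref{PROP: Aut^TF zdefiniowane macierzami} then applies once more: such $r$ exists iff $r \in \text{Aut}^{\pi}(\Gamma_1)$ with $\gamma(r) = q \circ r$, that is, iff $q = \gamma(r)\circ r^{-1}$ for some $r \in \text{Aut}^{\pi}(\Gamma_1)$. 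Finally I would note that $q = \gamma(r) r^{-1}$ is the same as $q^{-1} = r\,\gamma(r)^{-1} = r\,\gamma(r^{-1}) = \alpha(r^{-1})$, using that $\gamma$ is an involutive automorphism (Theorem \ref{THM: kluczowe własności gammy}); hence such $r$ exists iff $q^{-1} \in \text{Im}(\alpha_{\Gamma_1})$, and since $\text{Im}(\alpha_{\Gamma_1})$ is closed under inverses by Observation \ref{OBS wlasnosci alphy i elementow odwracanych przez gamme}(e) with $n=-1$, this is equivalent to $q \in \text{Im}(\alpha_{\Gamma_1})$.

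The main obstacle I anticipate is the bookkeeping rather than any deep difficulty: getting the transpose step right so that $q \in \text{Ant}(\Gamma_1)$ drops out, keeping the composition order of $q$, $r$ and $\gamma(r)$ consistent with $m_a m_b = m_{a\circ b}$, and correctly invoking the second clause of Proposition \ref{PROP: Aut^TF zdefiniowane macierzami} (that the witnessing permutation is forced to be $\gamma$ of the first). The clean passage to $\text{Im}(\alpha)$ is exactly what the inverse-closure in Observation \ref{OBS wlasnosci alphy i elementow odwracanych przez gamme} buys us, letting us move between $q$ and $q^{-1}$ without further work.
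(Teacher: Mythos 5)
Your proposal is correct and takes essentially the same approach as the paper: the transpose-and-symmetry argument giving $q \in \text{Ant}(\Gamma_1)$ is identical, and your equivalence chain reduces the isomorphism question, exactly as the paper's two directions do, to the relation $m_r A_1 = A_1 m_{q \circ r}$ via Proposition \ref{PROP: Aut^TF zdefiniowane macierzami}. The only cosmetic deviation is the last step: the paper identifies $q = \alpha_{\Gamma_1}(\gamma_{\Gamma_1}(r \circ p))$ directly (indeed $\alpha\bigl(\gamma(r)^{-1}\bigr) = \gamma(r)\,r^{-1} = q$), so your detour through $q^{-1} = \alpha(r^{-1})$ together with inverse-closure of $\text{Im}(\alpha)$ from Observation \ref{OBS wlasnosci alphy i elementow odwracanych przez gamme}(e) is valid but avoidable.
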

\begin{proof}
    For proving that $q \in \text{Ant}(\Gamma_1)$ it is enough to note that $m_{p}^{-1} A_2  m_{p}$ is symmetric, therefore
    $$
    A_1 m_{q} = m_{p}^{-1} A_2  m_{p} = {(m_{p}^{-1} A_2  m_{p})}^{\text{T}} = {(A_1 m_{q})}^{\text{T}} = m_{q}^{-1} A_1. 
    $$
    Applying Proposition \ref{PROP: Aut^TF zdefiniowane macierzami} to the graph $\Gamma_1$ and permutation $q$ proves the result.
    \newline
    To prove the equivalence $\Gamma_1 \cong \Gamma_2 \Longleftrightarrow q \in \text{Im}({\alpha}_{\Gamma_1})$, let us start by noting that $\Gamma_1 \cong \Gamma_2$ if and only if there exists $r \in \text{Sym}(\{1,\ldots,n\})$ such that ${m_r}^{-1} A_1 m_r = A_2$. 
    \newline
    ,,$\Longrightarrow$''
    \newline
    Assumed relation of adjacency matrices allows us to obtain a relation between permutations $p,q$ and $r$, since $A_1 m_{q} = m_{p}^{-1} A_2  m_{p} = m_{p}^{-1} {m_r}^{-1} A_1 m_r  m_{p} = {m_{r \circ p}}^{-1} A_1 m_{r \circ p}$, which gives us ${m_{r \circ p}}^{-1} A_1 = A_1 m_{q} {m_{r \circ p}}^{-1}$, consequently $\gamma_{\Gamma_1}({(r \circ p)}^{-1}) = q \circ {(r \circ p)}^{-1}$. It leads us to equation $q = {\gamma_{\Gamma_1}(r \circ p)}^{-1} \circ (r \circ p) = \alpha_{\Gamma_1}(\gamma_{\Gamma_1}(r \circ p)) \in \text{Im}(\alpha_{\Gamma_1})$, which shows that indeed $q \in \text{Im}(\alpha_{\Gamma_1})$.
    \newline
    ,,$\Longleftarrow$''
    \newline
    Since $q \in \text{Im}({\alpha}_{\Gamma_1})$, there exists $r \in \text{Aut}^{\pi}(\Gamma_1)$ such that $q = \alpha_{\Gamma_1}(\gamma_{\Gamma_1}(r)) = \gamma_{\Gamma_1}(r) \circ r^{-1}$. This gives us $\gamma_{\Gamma_1}(r) = q \circ r$, and
    $$
    {m_{p \circ r}}^{-1} A_2 m_{p \circ r} = {m_{r}}^{-1} {m_{p}}^{-1} A_2 m_{p} m_{r} = {m_{r}}^{-1} A_1 m_{q} m_{r} = {m_{r}}^{-1} A_1 m_{q\circ r} = A_1.
    $$
    This completes the proof, as conjugation by a permutation matrix corresponds to relabeling of vertices.
\end{proof}

Next proposition provides an useful criteria for deciding, if there exists a graph with adjacency matrix $A_{\Gamma} \cdot m_{\psi}$, where $\Gamma$ is some reduced graph and $\psi$ is a permutation of its' verticies. 

\begin{prop} \label{PROP: warunek na istnienie grafu TF-izo}
    Let $\Gamma = (V,E)$ be a reduced graph with adjacency matrix $A$ and $\psi$ be a permutation of set $V$. Then the following conditions are equivalent:
    \begin{itemize}
        \item there exists a graph $\Gamma_{\psi}$ with adjacency matrix $A \cdot m_{\psi}$,
        \item $(v,\psi(v)) \notin E$ for every vertex $v \in V$ and $\psi \in \text{Ant}(\Gamma)$.
    \end{itemize}
    Moreover, if $\Gamma_{\psi}$ exists, it is a reduced graph which is TF-isomorphic to $\Gamma$.
\end{prop}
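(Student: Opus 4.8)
The plan is to translate ``there exists a graph $\Gamma_\psi$ with adjacency matrix $A\,m_\psi$'' into matrix conditions, using the basic fact that a $0$-$1$ matrix is the adjacency matrix of a simple loopless undirected graph exactly when it is symmetric and has zero diagonal. Write $B = A\,m_\psi$. First I would compute $B$ entrywise: right multiplication by $m_\psi$ permutes columns, so $B_{ij} = A_{i,\psi(j)}$, that is $B_{ij} = 1 \iff (i,\psi(j)) \in E$. Since $B$ is automatically a $0$-$1$ matrix, the existence of $\Gamma_\psi$ amounts to the two requirements that $B$ have zero diagonal and be symmetric.

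The zero-diagonal requirement is $B_{ii} = A_{i,\psi(i)} = 0$, i.e. $(v,\psi(v)) \notin E$ for every vertex $v$, which is exactly the first half of the second bullet. For symmetry, I would rewrite $B_{ij} = B_{ji}$ as $(i,\psi(j)) \in E \iff (\psi(i),j) \in E$, using that $E$ is symmetric, and then substitute $w = \psi(j)$ to reach the equivalent condition $(v,w) \in E \iff (\psi(v),\psi^{-1}(w)) \in E$ for all $v,w$. This is precisely the statement $(\psi,\psi^{-1}) \in \text{Aut}^{\text{TF}}(\Gamma)$; by Proposition \ref{PROP: Aut^TF jest izo z Aut^pi} and Theorem \ref{THM: kluczowe własności gammy} (the partner permutation is unique and equals $\gamma(\psi)$) this holds if and only if $\psi \in \text{Aut}^\pi(\Gamma)$ and $\gamma(\psi) = \psi^{-1}$, i.e. if and only if $\psi \in \text{Ant}(\Gamma)$. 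Conjoining the two matrix conditions then yields the equivalence of the two bullets.

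For the ``moreover'' clause, assume $\Gamma_\psi$ exists. Because the $j$-th column of $B$ is the $\psi(j)$-th column of $A$, the neighbourhood of $j$ in $\Gamma_\psi$ is $N_\Gamma(\psi(j))$, so the families of neighbourhoods coincide: $N(\Gamma_\psi) = N(\Gamma)$. As $\Gamma$ is reduced these sets are pairwise distinct, whence $\Gamma_\psi$ is reduced. TF-isomorphism is then immediate from Proposition \ref{PROP: przetlumaczenie TF-izo na macierze i sasiedztwa}: on the neighbourhood side the identity map sends each $N_\Gamma(v)$ into $N(\Gamma) = N(\Gamma_\psi)$, and on the matrix side $m_p A m_p^{-1} = (A\,m_\psi)\,m_q^{-1}$ holds with $p = \mathrm{id}$ and $q = \psi$.

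I expect the symmetry computation to be the only subtle point: one must respect the column-versus-row convention for $m_\psi$ and invoke the symmetry of $E$ at the correct moment, so as to recognise the resulting relation as $(\psi,\psi^{-1}) \in \text{Aut}^{\text{TF}}(\Gamma)$, which is what links matrix symmetry to membership in $\text{Ant}(\Gamma)$. The remaining steps are routine bookkeeping.
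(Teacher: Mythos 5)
Your proof is correct and follows essentially the same route as the paper: both reduce the existence of $\Gamma_{\psi}$ to the zero-diagonal and symmetry conditions on $A\,m_{\psi}$, with the diagonal yielding $(v,\psi(v))\notin E$ and the symmetry yielding $\psi\in\text{Ant}(\Gamma)$ --- you argue via the entrywise condition $(\psi,\psi^{-1})\in\text{Aut}^{\text{TF}}(\Gamma)$ together with uniqueness of the partner permutation, while the paper uses the identity ${(A\,m_{\psi})}^{\text{T}}=m_{\psi^{-1}}A$ and Proposition \ref{PROP: Aut^TF zdefiniowane macierzami}, which are the same fact in two guises. A small point in your favour: you explicitly prove the ``moreover'' clause (reducedness from $N(\Gamma_{\psi})=N(\Gamma)$ and TF-isomorphism via the identity map and Proposition \ref{PROP: przetlumaczenie TF-izo na macierze i sasiedztwa}), which the paper's own proof leaves implicit.
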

\begin{proof}
    Let us start with proving, that the first claim implies the second one. So, $A \cdot m_{\psi}$ have the same rows as $A$, but permuted by permutation $\psi^{-1}$. Since $A \cdot m_{\psi}$ have only zeros on diagonal, we know that $\psi^{-1}$ maps every vertex to a vertex which it is not connected too, otherwise there would be ones on diagonal. Also, since $A \cdot m_{\psi}$ is a symmetric matrix, we get $A \cdot m_{\psi} = {(A \cdot m_{\psi})}^{T} = m_{{\psi}^{-1}} \cdot A$.
    \newline
    Now let us prove the converse. For a matrix to be an adjacency matrix of some graph, its entries have to be zeros or ones, it have to be symmetric and its diagonal must consist only of zeros.
    \newline
    Entries of $A \cdot m_{\psi}$ definitely consist only of zeros and ones, since $m_{\psi}$ is a permutation matrix. It is symmetric, since $\psi \in \text{Ant}(\Gamma)$, and therefore
    $$
    {(A \cdot m_{\psi})}^{\text{T}} = m_{\psi^{-1}} \cdot A = m_{\gamma(\psi)} \cdot A = A \cdot m_{\psi}.
    $$
    To prove that there are no zeros on diagonal it is enough once again use an argument that $A \cdot m_{\psi}$ have the same rows as $A$, but they are permuted by $\psi^{-1}$.
\end{proof}

    For any finite group $\text{H}$ and any $h \in \text{H}$ we will use symbol $\varphi_h$, to denote an automorphism of $\text{H}$ given by formula $\varphi_h : \text{H} \ni q \mapsto h^{-1} q h \in \text{H}$.

\begin{prop} \label{PROP: co sie dzieje z gammą przy przeksztalacaniu grafu na TF-izo}
    Let $\Gamma_1$ be a reduced graph with adjacency matrix $A_1$, $\Gamma_2$ be a graph with adjacency matrix $A_2$ and let $\psi \in \text{Ant}(\Gamma_1)$. If $A_2 = A_1 \cdot m_{\psi}$, then $\text{Aut}^{\pi}(\Gamma_1) = \text{Aut}^{\pi}(\Gamma_2)$ and $\gamma_{\Gamma_2} = \varphi_{\psi} \circ \gamma_{\Gamma_1}$.
\end{prop}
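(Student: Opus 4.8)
The plan is to translate the entire statement into the matrix language of Proposition \ref{PROP: Aut^TF zdefiniowane macierzami}, where every claim becomes an identity among permutation matrices. The one preliminary I would record is that $\Gamma_2$ is a \emph{reduced} graph: since $\psi \in \text{Ant}(\Gamma_1)$ and $A_2 = A_1 m_{\psi}$ is by hypothesis the adjacency matrix of an actual graph, the ``moreover'' clause of Proposition \ref{PROP: warunek na istnienie grafu TF-izo} applies and guarantees that $\Gamma_2$ is reduced (and TF-isomorphic to $\Gamma_1$). This is exactly what permits me to invoke Proposition \ref{PROP: Aut^TF zdefiniowane macierzami} for $\Gamma_2$ as well as for $\Gamma_1$, and in particular to use that the permutation $q$ with $m_p A = A m_q$ is \emph{uniquely} determined and equals $\gamma(p)$, for each of the two graphs.

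For the forward inclusion I would take $p \in \text{Aut}^{\pi}(\Gamma_1)$, so that $m_p A_1 = A_1 m_{\gamma_{\Gamma_1}(p)}$, and compute $m_p A_2 = m_p A_1 m_{\psi} = A_1 m_{\gamma_{\Gamma_1}(p)} m_{\psi} = A_1 m_{\gamma_{\Gamma_1}(p)\,\circ\,\psi}$. The point is then to recognize the right-hand side as $A_2 m_s$ for the correct $s$; cleanest is to verify directly that with $s = \varphi_{\psi}(\gamma_{\Gamma_1}(p)) = \psi^{-1}\circ\gamma_{\Gamma_1}(p)\circ\psi$ one has $A_2 m_s = A_1 m_{\psi} m_{\psi^{-1}\,\circ\,\gamma_{\Gamma_1}(p)\,\circ\,\psi} = A_1 m_{\gamma_{\Gamma_1}(p)\,\circ\,\psi} = m_p A_2$. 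Thus $m_p A_2 = A_2 m_{\varphi_{\psi}(\gamma_{\Gamma_1}(p))}$, and Proposition \ref{PROP: Aut^TF zdefiniowane macierzami} applied to the reduced graph $\Gamma_2$ yields both $p \in \text{Aut}^{\pi}(\Gamma_2)$ and $\gamma_{\Gamma_2}(p) = \varphi_{\psi}(\gamma_{\Gamma_1}(p))$. This already establishes $\text{Aut}^{\pi}(\Gamma_1) \subseteq \text{Aut}^{\pi}(\Gamma_2)$ together with the identity $\gamma_{\Gamma_2} = \varphi_{\psi}\circ\gamma_{\Gamma_1}$ on $\text{Aut}^{\pi}(\Gamma_1)$.

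For the reverse inclusion I would run the symmetric computation. Given $p \in \text{Aut}^{\pi}(\Gamma_2)$ with $m_p A_2 = A_2 m_{\gamma_{\Gamma_2}(p)}$, I substitute $A_2 = A_1 m_{\psi}$ and right-multiply by $m_{\psi}^{-1} = m_{\psi^{-1}}$ to obtain $m_p A_1 = A_1 m_{\psi\,\circ\,\gamma_{\Gamma_2}(p)\,\circ\,\psi^{-1}}$, whence $p \in \text{Aut}^{\pi}(\Gamma_1)$ by Proposition \ref{PROP: Aut^TF zdefiniowane macierzami}. Combining the two inclusions gives $\text{Aut}^{\pi}(\Gamma_1) = \text{Aut}^{\pi}(\Gamma_2)$, and the formula for $\gamma_{\Gamma_2}$ found on the forward pass now holds on the entire common group.

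I expect the only genuine obstacle to be bookkeeping rather than anything conceptual: I must fix, and use consistently, the convention $m_p m_q = m_{p\circ q}$ coming from the column description of permutation matrices, and then track carefully whether $\psi$ enters on the left or the right so that the conjugation emerges as $\varphi_{\psi}(q) = \psi^{-1} q \psi$ with the factors in the correct order. Any step that cancels $A_1$ from an equality $A_1 m_a = A_1 m_b$ relies on the columns of $A_1$ being distinct, i.e. on $\Gamma_1$ being reduced; to be safe I would avoid explicit cancellation altogether and instead exhibit the matching matrix identity directly, leaning on the uniqueness clause of Proposition \ref{PROP: Aut^TF zdefiniowane macierzami} to read off $\gamma_{\Gamma_2}(p)$.
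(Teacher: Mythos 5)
Your proposal is correct and follows essentially the same route as the paper: the same matrix computation $m_p A_2 = A_1 m_{\gamma_{\Gamma_1}(p)}m_{\psi} = A_2 m_{\varphi_{\psi}\circ\gamma_{\Gamma_1}(p)}$ for the forward inclusion, with the reverse obtained by substituting $A_1 = A_2 m_{\psi^{-1}}$, exactly as the paper's closing remark indicates. Your only addition is making explicit two points the paper leaves implicit — that $\Gamma_2$ is reduced (via the ``moreover'' clause of Proposition \ref{PROP: warunek na istnienie grafu TF-izo}, needed to invoke Proposition \ref{PROP: Aut^TF zdefiniowane macierzami} for $\Gamma_2$) and the convention $m_p m_q = m_{p\circ q}$ — which is careful bookkeeping, not a different argument.
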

\begin{proof}
    Take any $\pi \in \text{Aut}^{\pi}(\Gamma_1)$. Then, we get $m_{\pi} A_1 = A_1 m_{\gamma_{\Gamma_1}(\pi)}$. This gives
    $$
    m_{\pi} \cdot A_2 = m_{\pi} A_1 \cdot m_{\psi} = m_{\pi} \cdot A_1 = A_1 \cdot m_{\gamma_{\Gamma_1}(\pi)} m_{\psi} = A_2 \cdot m_{\psi}^{-1} m_{\gamma_{\Gamma_1}(\pi)} m_{\psi} = A_2 \cdot m_{\varphi_{\psi} \circ \gamma_{\Gamma_1}(\pi)}.
    $$
    Applying Proposition \ref{PROP: Aut^TF zdefiniowane macierzami} yields the result about $\gamma_{\Gamma_2}$ for element $\pi$.
    It also shows that $\text{Aut}^{\pi}(\Gamma_1) \subseteq \text{Aut}^{\pi}(\Gamma_2)$. To get the reverse it is enough to see that $A_2 \cdot m_{\psi^{-1}} = A_1$.
\end{proof}

\section{Notes on nonisomorphic graphs which are TF-isomorphic to a given graph} \label{SECTION: Notes on nonisomorphic graphs which are TF-isomorphic to a given graph}

Pacco and Scapellato \cite[Theorem 3,5]{pr1997} characterized nonisomorphic graphs TF-isomorphic to a given graph using certain conjugacy classes from $\text{Aut}(\text{B}\Gamma)$. To understand this theorem we first need to state some definitions and lemmas. We will strengthen their result by additionally proving, that sizes of those conjugacy classes are equal to instability indexes of corresponding graphs. In this Section we will use the following convention. For a group $\text{H}$, and $h \in \text{H}$, by ${(h)}_{c}$ we understand the conjugacy class of $h$ in $\text{H}$.

\begin{defi}
    Let $\Gamma$ be a connected reduced nonbipartite graph. W denote by $\text{S}(\Gamma)$ the set of all $g \in \text{Aut}(\text{B}\Gamma)$ of order two and which switch bipartite parts of $\text{B}\Gamma$.
\end{defi}

\begin{defi}
    Let $\Gamma$ be a connected reduced nonbipartite graph and let $g \in \text{S}(\Gamma)$. Then $g$ is called $\textit{strongly switching element of }\Gamma$ if and only if every vertex of $\text{B}\Gamma$ is mapped by $g$ to a vertex outside of its neighbourhood.
\end{defi}

For any connected reduced and nonbipartite graph $\Gamma$ we will use an isomorphism $\text{Aut}(\text{B}\Gamma) \cong \text{Aut}^{\pi}(\Gamma) \rtimes_{\theta} {\mathbb{Z}}_2$, where ${\mathbb{Z}}_2 = \{e,x\}$ and $\theta(x) \equiv \gamma$. For convenience, we refer to the element $(e,x) \in \text{Aut}^{\pi}(\Gamma) \rtimes_{\theta} {\mathbb{Z}}_2$ simply by $x$.
\newline
 In the next lemma we will use a function $x \cdot$ to map a set $\text{Ant}(\Gamma)$ to the set $\text{S}(\Gamma)$. One should note that formality, function $x \cdot$ first take elements from $\text{Ant}(\Gamma)$, which is apriori subset of $\text{Aut}^{\pi}(\Gamma)$, to $\text{Aut}^{\pi}(\Gamma) \rtimes_{\theta} {\mathbb{Z}}_2$ in a canonical way, and then multiply by $x$ on the left.
 
\begin{lem} \label{LEMAT: bijekcja Antymorfizmy <-> Swithing elementy}
    Let $\Gamma$ be a connected reduced nonbipartite graph. Then the function $x \cdot $ defines a bijection from $\text{Ant}(\Gamma)$ to $\text{S}(\Gamma)$.
\end{lem}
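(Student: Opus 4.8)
The plan is to carry out the entire argument inside the semidirect product $\text{Aut}^{\pi}(\Gamma) \rtimes_{\theta} \mathbb{Z}_2$ with $\mathbb{Z}_2 = \{e,x\}$ and $\theta(x) \equiv \gamma$, reducing the claim to a short computation with the multiplication rule $(\pi_1,\varepsilon_1)(\pi_2,\varepsilon_2) = (\pi_1 \cdot \theta(\varepsilon_1)(\pi_2),\, \varepsilon_1\varepsilon_2)$. First I would record the effect of the map $x\cdot$: for $\pi_0 \in \text{Aut}^{\pi}(\Gamma)$, viewing $\pi_0$ as $(\pi_0,e)$ and multiplying by $x=(e,x)$ on the left gives
$$
x \cdot \pi_0 = (e,x)(\pi_0,e) = (\gamma(\pi_0),x).
$$
I would also note that an element of $\text{Aut}(\text{B}\Gamma)$ switches the two bipartite parts exactly when its $\mathbb{Z}_2$-component is $x$, since the part-preserving subgroup is $\text{Aut}^{\pi}(\Gamma) \times \{e\} \cong \text{Aut}_0(\text{B}\Gamma)$ and $x$ itself realizes the swap $(v,0) \leftrightarrow (v,1)$. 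Thus every $g \in \text{S}(\Gamma)$ has the form $(\pi,x)$.

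Next I would compute the square of a general part-switching element. Using $\theta(x) \equiv \gamma$,
$$
(\pi,x)^2 = (\pi \cdot \gamma(\pi),\, e),
$$
so $(\pi,x)$ has order two precisely when $\pi\gamma(\pi) = e$, that is $\gamma(\pi) = \pi^{-1}$; and since its second coordinate is $x \neq e$, such an element is never the identity, so ``order dividing two'' coincides with ``order two''. Hence $\text{S}(\Gamma) = \{(\pi,x) \mid \gamma(\pi) = \pi^{-1}\}$, which is exactly the family indexed in its first coordinate by $\text{Ant}(\Gamma)$. A small lemma I would establish once and reuse is that $\gamma$ restricts to an involution of $\text{Ant}(\Gamma)$: if $\gamma(\pi) = \pi^{-1}$ then, using $\gamma^2 \equiv Id$ from Theorem \ref{THM: kluczowe własności gammy}, we get $\gamma(\gamma(\pi)) = \pi = (\gamma(\pi))^{-1}$, so $\gamma(\pi) \in \text{Ant}(\Gamma)$.

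With these in hand the three requirements are routine. For well-definedness, if $\pi_0 \in \text{Ant}(\Gamma)$ then $x \cdot \pi_0 = (\gamma(\pi_0),x)$ lies in $\text{S}(\Gamma)$, because $\gamma(\pi_0)\gamma(\gamma(\pi_0)) = \gamma(\pi_0)\pi_0 = \pi_0^{-1}\pi_0 = e$. Injectivity is immediate, as $x\cdot$ is the composition of the injection $\pi_0 \mapsto (\pi_0,e)$ with left multiplication by the invertible element $x$. For surjectivity, take $g = (\pi,x) \in \text{S}(\Gamma)$, so $\gamma(\pi) = \pi^{-1}$ by the order-two computation, and set $\pi_0 = \gamma(\pi)$; then $\pi_0 \in \text{Ant}(\Gamma)$ by the restriction lemma, and $x \cdot \pi_0 = (\gamma(\gamma(\pi)),x) = (\pi,x) = g$, exhibiting the required preimage.

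The only place demanding care is the bookkeeping of the semidirect-product multiplication together with the repeated use of $\gamma^2 \equiv Id$: one must keep straight that $x \cdot \pi_0$ carries $\gamma(\pi_0)$, not $\pi_0$, in its first coordinate, so that the correspondence between $\text{Ant}(\Gamma)$ and $\text{S}(\Gamma)$ is mediated by the involution $\gamma$ rather than being the literal identity on first coordinates. I expect no genuine obstacle beyond this indexing subtlety.
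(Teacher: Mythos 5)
Your proof is correct and takes essentially the same approach as the paper: both hinge on the semidirect-product computation $(x\cdot g)^2 = \gamma(g)\,g$, so that $x\cdot g$ has order two exactly when $\gamma(g)=g^{-1}$, with injectivity coming from left multiplication by the invertible element $x$ and surjectivity from the fact that part-switching automorphisms are precisely those with $\mathbb{Z}_2$-component $x$. The only difference is presentational: the paper compresses all of this into one displayed equivalence, while you spell out the coordinate bookkeeping ($x\cdot\pi_0=(\gamma(\pi_0),x)$, and $\gamma$ restricting to an involution of $\text{Ant}(\Gamma)$) and the surjectivity step that the paper leaves implicit.
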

\begin{proof}
    Clearly function $x \cdot$ is injective. To check that it maps $\text{Ant}(\Gamma)$ to $\text{S}(\Gamma)$ it is enough to perform following calculation
    $$
    e = {(x \cdot g)}^2 = (x \cdot g \cdot x) \cdot g = \gamma(g) \cdot g \quad \Longleftrightarrow \quad \gamma(g) = g^{-1}.
    $$  
\end{proof}


\begin{lem} \label{g i g' z SGamma sprzezone --> g^{-1}g' w Im(alpha) Gamma_g}
    Let $\Gamma$ be a connected reduced nonbipartite graph and let $g,g' \in S(\Gamma)$. Then  $g' \in {(g)}_{c}$ if and only if $g^{-1} g' \in \text{Im}(\alpha_{g})$, where $\alpha_g$ is the function $\text{Aut}^{\pi}(\Gamma) \rightarrow \text{Aut}^{\pi}(\Gamma)$ defined by a formula $h \mapsto h^{-1} \cdot \varphi_{x \cdot g} \circ \gamma(h)$.
\end{lem}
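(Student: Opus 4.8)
The plan is to do all the bookkeeping inside the group $G=\text{Aut}(\text{B}\Gamma)\cong\text{Aut}^{\pi}(\Gamma)\rtimes_{\theta}\mathbb{Z}_2$. Write $H=\text{Aut}^{\pi}(\Gamma)$ and let $x$ denote the chosen involution, so that $xhx^{-1}=\gamma(h)$ for $h\in H$. By Lemma \ref{LEMAT: bijekcja Antymorfizmy <-> Swithing elementy} each element of $S(\Gamma)$ sits in the non-identity coset $Hx$ and has the form $g=bx$ with $b=x\cdot g\in\text{Ant}(\Gamma)$; in particular $\gamma(b)=b^{-1}$ and $\varphi_{x\cdot g}=\varphi_{b^{-1}}$. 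I would write $g'=b'x$ in the same way.

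First I would record that $\alpha_g$ is literally an $\alpha$-map attached to an involutive automorphism. Setting $\gamma_g:=\varphi_{x\cdot g}\circ\gamma$ one has $\alpha_g(h)=h^{-1}\gamma_g(h)$, which is exactly the shape of Definition \ref{definicja funkcji alpha}. Using the identity $\gamma\varphi_{s}\gamma^{-1}=\varphi_{\gamma(s)}$ together with $\gamma(b)=b^{-1}$, a one-line computation gives $\gamma_g^{2}\equiv Id$, so $\gamma_g$ is an involutive automorphism of $H$ (this is the $\gamma$ of the companion graph of Proposition \ref{PROP: co sie dzieje z gammą przy przeksztalacaniu grafu na TF-izo}, but I would argue purely abstractly, since when $g$ is not strongly switching that graph need not exist). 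The proof of Observation \ref{OBS wlasnosci alphy i elementow odwracanych przez gamme} only ever uses that $\gamma$ is an involutive automorphism of a finite group, never the graph, so every part of it holds verbatim with $\gamma$ replaced by $\gamma_g$; part (e) with $n=-1$ then yields that $\text{Im}(\alpha_g)$ is closed under taking inverses. This transfer is the one place that needs care, and I expect it to be the main obstacle together with the semidirect-product algebra below.

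Next I would reduce conjugation in $G$ to conjugation by $H$. Since $g$ is an involution, $wgw^{-1}=(wg)\,g\,(wg)^{-1}$ for every $w\in G$, and if $w\in Hx$ then $wg\in (Hx)(Hx)=H$. Hence $(g)_c=\{\,cgc^{-1}\mid c\in H\,\}$, so only $H$-conjugates need to be analysed.

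Finally the computational heart: for $c\in H$, expanding $(cgc^{-1})g=c(bx)c^{-1}(bx)$ and repeatedly applying $xc=\gamma(c)x$, $x^{2}=e$ and $\gamma(b)=b^{-1}$ gives $(cgc^{-1})g=c\,b\,\gamma(c)^{-1}b^{-1}=\alpha_g(c^{-1})$, equivalently $g^{-1}(cgc^{-1})=\bigl(\alpha_g(c^{-1})\bigr)^{-1}$. Combining the three ingredients, as $c$ ranges over $H$ the element $g^{-1}(cgc^{-1})$ ranges exactly over $\text{Im}(\alpha_g)^{-1}=\text{Im}(\alpha_g)$, while $g''\mapsto g^{-1}g''$ is a bijection of $G$. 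Therefore $g^{-1}g'\in\text{Im}(\alpha_g)$ holds precisely when $g'\in(g)_c$, which is the asserted equivalence; everything outside the two delicate points flagged above is then formal.
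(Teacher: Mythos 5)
Your proposal is correct, and its skeleton is the same as the paper's: both arguments live in $\text{Aut}(\text{B}\Gamma)\cong\text{Aut}^{\pi}(\Gamma)\rtimes_{\theta}\mathbb{Z}_2$, first reduce conjugation of $g$ to conjugation by elements of $H=\text{Aut}^{\pi}(\Gamma)$, and then finish by a semidirect-product computation. One notational slip should be fixed: from $g=bx$ you get $b=g\cdot x={(x\cdot g)}^{-1}$, not $b=x\cdot g$ (indeed $x\cdot g=xbx=\gamma(b)=b^{-1}$). Fortunately the two facts you actually use, namely $\gamma(b)=b^{-1}$ and $\varphi_{x\cdot g}=\varphi_{b^{-1}}$, are exactly the correct ones for $b$ defined by $g=bx$, and your central identity $(cgc^{-1})g=c\,b\,\gamma(c)^{-1}b^{-1}=\alpha_g(c^{-1})$ checks out, so nothing breaks.

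Where you genuinely diverge is the finishing mechanism. The paper takes $g'=h^{-1}gh$, computes $g^{-1}g'=g_0^{-1}\gamma(h)^{-1}g_0h$ with $g_0=x\cdot g$, and exhibits an explicit $\alpha_g$-preimage in each direction. (As written, the paper's preimage $g_0hg_0^{-1}$ is miscomputed; the correct one is $g_0^{-1}\gamma(h)g_0$, as one can check already for $H=\mathbb{Z}_3$ with $\gamma$ the inversion map and $g_0$ a generator. Membership in $\text{Im}(\alpha_g)$, hence the lemma, is unaffected.) You instead prove the set identity $\{g^{-1}g'\mid g'\in(g)_c\}=\text{Im}(\alpha_g)^{-1}$ and then invoke inverse-closure of $\text{Im}(\alpha_g)$, obtained by checking that $\gamma_g=\varphi_{x\cdot g}\circ\gamma$ is an involutive automorphism of $H$ and transferring Observation \ref{OBS wlasnosci alphy i elementow odwracanych przez gamme}; the transfer is legitimate since that proof is purely group-theoretic, and your caution about not citing the companion graph (which need not exist when $g$ is not strongly switching) is well placed. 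Two remarks: the full Observation is overkill, since inverse-closure is the one-liner ${\alpha_g(\tau)}^{-1}=\gamma_g(\tau)^{-1}\tau=\gamma_g(\tau)^{-1}\gamma_g(\gamma_g(\tau))=\alpha_g(\gamma_g(\tau))$; and the detour is avoidable altogether, because computing the other product $g(cgc^{-1})=b\,\gamma(c)\,b^{-1}c^{-1}=\alpha_g\bigl(b\,\gamma(c)^{-1}b^{-1}\bigr)$ lands in the image directly. Your coset reduction via $wgw^{-1}=(wg)\,g\,(wg)^{-1}$ with $wg\in(Hx)(Hx)=H$ is also slicker than the paper's explicit computation $x^{-1}gx=g_0gg_0^{-1}$. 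Net effect: same route as the paper, with a cleaner conjugacy reduction, a slightly longer but structurally illuminating finish identifying $\alpha_g$ as the $\alpha$-map of the twisted involution $\gamma_g$, and in fact a more reliable computation than the paper's own.
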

\begin{proof}
    Recal that $\text{Aut}(\text{B}\Gamma) \cong \text{Aut}^{\pi}(\Gamma) \rtimes_{\theta} {\mathbb{Z}}_2$. We will first prove that $g' \in {(g)}_{c} \Rightarrow g^{-1} g' \in \text{Im}(\alpha_{g})$. By Lemma \ref{LEMAT: bijekcja Antymorfizmy <-> Swithing elementy} write $g = x \cdot g_0$ and $g' = x \cdot {g'}_0$, where $g_0,g'_0 \in \text{Ant}(\Gamma)$. Since $g' \in {(g)}_{c}$, there exists an element $h$ from $\text{Aut}(\text{B}\Gamma)$ such that $g' = h^{-1} g h$, and $x^{-1} g x = x g_0^{-1} = {g_0} x {g_0} g_0^{-1} =  {g_0} g g_0^{-1}$ hence, we can assume that $h \in \text{Aut}^{\pi}(\Gamma)$. Therefore $x \cdot g'_0 = g' = h^{-1} g h = h^{-1} x g_0 h = x {\gamma(h)}^{-1} g_0 h$, which implies $g'_0 = {\gamma(h)}^{-1} g_0 h$. The following calculation finishes the proof of first implication
    $$
    g^{-1} g' = g_0^{-1} g'_0 = g_0^{-1} {\gamma(h)}^{-1} g_0 h = \alpha_g(g_0 h g_0^{-1}).
    $$
    To prove that $g^{-1} g' \in \text{Im}(\alpha_{g}) \Rightarrow g' \in {(g)}_{c}$ it is enough to observe, that there exists $h \in \text{H}$ such that $ g_0^{-1} g'_0 = g^{-1} g' = \alpha_g(g_0 h g_0^{-1}) = {g_0}^{-1} {\gamma(h)}^{-1} g_0 h$, so $g'_0 = {\gamma(h)}^{-1} g_0 h$ and equivalently $g' = h^{-1} g h$.
\end{proof}

Following proposition is a known result, however for completeness of this paper we will see a short a short proof.

\begin{prop} \label{PROP: strongly switching elementy to klasy sprzezen}
    Let $\Gamma = (V,E)$ be a connected reduced nonbipartite graph, and $g \in \text{Aut}(\text{B}\Gamma)$ be a strongly switching element. Then every $g' \in {(g)}_{c}$ is also a strongly switching element.
\end{prop}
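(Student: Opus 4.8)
The plan is to exploit the fact that conjugation in $\text{Aut}(\text{B}\Gamma)$ amounts to relabelling the vertices of $\text{B}\Gamma$ by an automorphism, and that the strongly switching condition is a purely local edge condition which any automorphism must preserve. First I would record that $g'$ still lies in $\text{S}(\Gamma)$: writing $g' = h^{-1} g h$ for some $h \in \text{Aut}(\text{B}\Gamma)$, the element $g'$ again has order two, and since $\text{Aut}^{\pi}(\Gamma)$ is the index-two normal subgroup of part-preserving automorphisms, conjugation cannot move an element out of the nontrivial coset $\text{Aut}^{\pi}(\Gamma)\cdot x$. Hence $g'$ again switches the two bipartite parts, so it is at least a member of $\text{S}(\Gamma)$.

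Next I would establish the strongly switching property by contradiction. Suppose some vertex $v$ of $\text{B}\Gamma$ satisfies $g'(v) \in N(v)$, that is $(v, g'(v)) \in E_{\text{B}\Gamma}$. Rewriting the conjugation relation as $h \circ g' = g \circ h$ and applying the graph automorphism $h$ to the edge $(v, g'(v))$, I obtain $(h(v), h(g'(v))) = (h(v), g(h(v))) \in E_{\text{B}\Gamma}$. Setting $w = h(v)$, this reads $g(w) \in N(w)$, contradicting the assumption that $g$ maps every vertex outside its own neighbourhood. Therefore no such $v$ can exist, and $g'$ is strongly switching.

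I do not expect any serious obstacle here: the whole argument rests on the single observation that $h$ carries edges to edges together with the bookkeeping identity $h \circ g' = g \circ h$. The only point deserving a word of care is verifying that $g'$ remains of order two and part-switching before one even speaks of it being strongly switching, and this is immediate from the conjugacy relation and the normality of $\text{Aut}^{\pi}(\Gamma)$ in $\text{Aut}(\text{B}\Gamma)$. One could equally phrase the second paragraph without contradiction, directly checking $(w, g'(w)) \notin E_{\text{B}\Gamma}$ for every $w$ by pulling the edge back along $h^{-1}$ and invoking that $g$ is strongly switching; either formulation is essentially a one-line edge-chase.
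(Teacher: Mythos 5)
Your proof is correct, and it takes a genuinely different and substantially more elementary route than the paper's. You argue entirely inside $\text{B}\Gamma$: first, $g'$ stays in $\text{S}(\Gamma)$ because conjugation preserves order and acts trivially on the cosets of the index-two (hence normal) subgroup of part-preserving automorphisms (this implicitly uses that $\Gamma$ is connected and nonbipartite, so $\text{B}\Gamma$ is connected and its bipartition is unique, making ``part-preserving'' well defined); second, the strongly switching condition transfers by a one-line edge chase from $h \circ g' = g \circ h$ and the fact that the automorphism $h$ carries edges to edges. The paper instead proves the statement through its matrix machinery: it passes from $g$ to the antimorphism $g_0 = x \cdot g \in \text{Ant}(\Gamma)$ via Lemma \ref{LEMAT: bijekcja Antymorfizmy <-> Swithing elementy}, observes that ``strongly switching'' means exactly that $A_{\Gamma} \cdot m_{g_0}$ is the adjacency matrix of a loopless graph $\Gamma_0$ (Proposition \ref{PROP: warunek na istnienie grafu TF-izo}), then uses the lemma characterizing conjugacy of switching elements together with the proposition describing how $\gamma$ transforms under passage to a TF-isomorphic graph to conclude that $g_0^{-1} g'_0 \in \text{Im}(\alpha_{\Gamma_0})$, and finally invokes Corollary \ref{COR: brak krawedzi w orbitach alphy} to get a graph with adjacency matrix $A_{\Gamma} \cdot m_{g'_0}$, which translates back into $g'$ being strongly switching. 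What the paper's longer detour buys is a rehearsal of precisely the dictionary (conjugacy classes of strongly switching elements $\longleftrightarrow$ graphs TF-isomorphic to $\Gamma$) that powers Theorem \ref{THM: klasy sprzezenosci strongly switching mapuja sie na TF-iso grafy a ilosc ich elementow to instability indexy}; what your argument buys is brevity and self-containedness, needing nothing beyond the definition of a graph automorphism and the semidirect-product decomposition of $\text{Aut}(\text{B}\Gamma)$, and it would work verbatim as a replacement proof.
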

\begin{proof}
    Since $\Gamma$ is reduced and nonbipartite we get $\text{Aut}(\text{B}\Gamma) \cong \text{Aut}^{\pi}(\Gamma) \rtimes_{\theta} {\mathbb{Z}}_2$ where ${\mathbb{Z}}_2 = \{e,x\}$ and $\theta(x) = \gamma$. By lemma \ref{LEMAT: bijekcja Antymorfizmy <-> Swithing elementy} there exists $g_0,g'_0 \in \text{Ant}(\Gamma)$ such that $g_0 = x \cdot g$ and $g'_0 = x \cdot g'$. The fact that $g$ is strongly switching translates to the following. For every $v \in V$ it holds that $(v,g_0(v)) \notin E$. Therefore, Proposition \ref{PROP: warunek na istnienie grafu TF-izo} assures us, that there is a graph $\Gamma_0$ with adjacency matrix $A_{\Gamma} \cdot m_{g_0}$. Now, by Lemma \ref{g i g' z SGamma sprzezone --> g^{-1}g' w Im(alpha) Gamma_g} and Proposition \ref{PROP: co sie dzieje z gammą przy przeksztalacaniu grafu na TF-izo} $g_0^{-1} g'_0 \in \text{Im}(\alpha_{\Gamma_0})$. By Corollary \ref{COR: brak krawedzi w orbitach alphy}, Observation \ref{OBS wlasnosci alphy i elementow odwracanych przez gamme} (a) and Proposition \ref{PROP: warunek na istnienie grafu TF-izo} applied to $\Gamma_0$ we conclude that there exists a graph $\Gamma'_0$ with adjacency matrix $A_{\Gamma'_0} = A_{\Gamma_0} \cdot m_{{g_0}^{-1} g'_0} = A_{\Gamma} \cdot m_{g_0} \cdot m_{ g_0^{-1} g'_0} = A_{\Gamma} \cdot m_{g'_0}$. Applying Proposition \ref{PROP: warunek na istnienie grafu TF-izo} to $\Gamma$ shows, that $g'_0$ maps every vertex of $\Gamma$ to a vertex it was not connected to. Therefore $g'$ is a strongly switching element.
\end{proof}

Now we are ready to prove the main result of this section.

\begin{thm} \label{THM: klasy sprzezenosci strongly switching mapuja sie na TF-iso grafy a ilosc ich elementow to instability indexy}
    Let $\Gamma$ be a connected reduced nonbipartite graph. Then there is a bijection $\psi_{\Gamma}$ between family of conjugacy classes consisting of strongly switching elements of $\Gamma$ and the set of nonisomoprhic graphs TF-isomorphic to $\Gamma$. Moreover, cardinality of any conjugacy class $\mathcal{C}$ which consist of strongly switching elements is equal to the instability index of a graph $\psi_{\Gamma}(\mathcal{C})$.
\end{thm}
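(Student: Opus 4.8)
The plan is to build $\psi_\Gamma$ explicitly through the antimorphism picture and then verify bijectivity and the cardinality claim by assembling the equivalences already established. First I would fix, for each conjugacy class $\mathcal{C}$ of strongly switching elements, a representative $g \in \mathcal{C}$ and write $g = x \cdot g_0$ with $g_0 \in \text{Ant}(\Gamma)$ via the bijection of Lemma \ref{LEMAT: bijekcja Antymorfizmy <-> Swithing elementy}. The strongly switching hypothesis says precisely that $(v, g_0(v)) \notin E$ for every $v$, so Proposition \ref{PROP: warunek na istnienie grafu TF-izo} produces a reduced graph $\Gamma_{g_0}$ with adjacency matrix $A_\Gamma \cdot m_{g_0}$ that is TF-isomorphic to $\Gamma$; I set $\psi_\Gamma(\mathcal{C})$ to be its isomorphism class. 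Since $\text{B}\Gamma_{g_0} \cong \text{B}\Gamma$ is connected, $\Gamma_{g_0}$ is again connected and nonbipartite, so the later invocation of Theorem \ref{THM: alpha jest stala na prawostronnych warstwach Aut} is legitimate.

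The crux is to show $g' \in (g)_c \Longleftrightarrow \Gamma_{g_0} \cong \Gamma_{g'_0}$, which simultaneously yields well-definedness and injectivity. Writing $g' = x \cdot g'_0$, one has $g^{-1}g' = g_0^{-1}g'_0$. By Proposition \ref{PROP: co sie dzieje z gammą przy przeksztalacaniu grafu na TF-izo} the relation $A_{\Gamma_{g_0}} = A_\Gamma m_{g_0}$ gives $\gamma_{\Gamma_{g_0}} = \varphi_{g_0} \circ \gamma_\Gamma$, and comparing this with the formula defining $\alpha_g$ in Lemma \ref{g i g' z SGamma sprzezone --> g^{-1}g' w Im(alpha) Gamma_g} (noting $x \cdot g = g_0$ since $x^2 = e$) I would identify $\alpha_g = \alpha_{\Gamma_{g_0}}$, hence $\text{Im}(\alpha_g) = \text{Im}(\alpha_{\Gamma_{g_0}})$. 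Lemma \ref{g i g' z SGamma sprzezone --> g^{-1}g' w Im(alpha) Gamma_g} then rewrites $g' \in (g)_c$ as $g_0^{-1}g'_0 \in \text{Im}(\alpha_{\Gamma_{g_0}})$, while Theorem \ref{TW: kiedy TF-izo sa izo za pomoca Im(alpha)}, applied with $p = \text{Id}$ to the identity $A_{\Gamma_{g'_0}} = A_{\Gamma_{g_0}} \cdot m_{g_0^{-1}g'_0}$, rewrites $\Gamma_{g_0} \cong \Gamma_{g'_0}$ as exactly the same membership. The two equivalences chain to give the claim. For surjectivity I would take any $\Gamma'$ TF-isomorphic to $\Gamma$ and feed the matrix form of Proposition \ref{PROP: przetlumaczenie TF-izo na macierze i sasiedztwa} into a relabeling: conjugating the given matrix identity by a suitable permutation matrix puts $\Gamma'$, up to isomorphism, in the form $A_\Gamma \cdot m_{g_0}$. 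Proposition \ref{PROP: warunek na istnienie grafu TF-izo} then forces $g_0 \in \text{Ant}(\Gamma)$ with $(v, g_0(v)) \notin E$, so $x \cdot g_0$ is strongly switching and its class maps to $[\Gamma']$.

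For the cardinality statement, Proposition \ref{PROP: strongly switching elementy to klasy sprzezen} guarantees that $\mathcal{C}$ consists entirely of strongly switching elements, hence lies in $S(\Gamma)$. I would show the map $g' \mapsto g^{-1}g' = g_0^{-1}g'_0$ is a bijection from $\mathcal{C}$ onto $\text{Im}(\alpha_g) = \text{Im}(\alpha_{\Gamma_{g_0}})$: injectivity is immediate, and for surjectivity, given $s \in \text{Im}(\alpha_{\Gamma_{g_0}})$ I would set $g' = g \cdot s$ and check $g' \in S(\Gamma)$ by verifying $g_0 s \in \text{Ant}(\Gamma)$. Here Observation \ref{OBS wlasnosci alphy i elementow odwracanych przez gamme}(a) for $\Gamma_{g_0}$ gives $\gamma_{\Gamma_{g_0}}(s) = s^{-1}$, i.e. $\gamma_\Gamma(s) = g_0 s^{-1} g_0^{-1}$, whence $\gamma_\Gamma(g_0 s) = (g_0 s)^{-1}$; Lemma \ref{g i g' z SGamma sprzezone --> g^{-1}g' w Im(alpha) Gamma_g} then places $g'$ in $(g)_c = \mathcal{C}$. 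Thus $|\mathcal{C}| = |\text{Im}(\alpha_{\Gamma_{g_0}})|$, which equals $\text{Inst}(\Gamma_{g_0})$ by Theorem \ref{THM: alpha jest stala na prawostronnych warstwach Aut}.

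The main obstacle throughout is the careful semidirect-product bookkeeping that identifies $\alpha_g$ with $\alpha_{\Gamma_{g_0}}$ and confirms that the translates $g \cdot s$ stay inside $S(\Gamma)$; both hinge on the transfer formula $\gamma_{\Gamma_{g_0}} = \varphi_{g_0} \circ \gamma_\Gamma$ and on the fact that $\text{Im}(\alpha)$ consists of $\gamma$-inverted elements. Once these identifications are pinned down, the remaining work is purely the assembly of the cited equivalences.
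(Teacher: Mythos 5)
Your proposal is correct and follows essentially the same route as the paper's proof: the same construction $\mathcal{C} \mapsto A_\Gamma \cdot m_{g_0}$ via Lemma \ref{LEMAT: bijekcja Antymorfizmy <-> Swithing elementy} and Proposition \ref{PROP: warunek na istnienie grafu TF-izo}, the same identification $\alpha_g = \alpha_{\Gamma_{g_0}}$ through Proposition \ref{PROP: co sie dzieje z gammą przy przeksztalacaniu grafu na TF-izo}, and the same chaining of Lemma \ref{g i g' z SGamma sprzezone --> g^{-1}g' w Im(alpha) Gamma_g} with Theorem \ref{TW: kiedy TF-izo sa izo za pomoca Im(alpha)} for well-definedness, injectivity, and the cardinality count via Theorem \ref{THM: alpha jest stala na prawostronnych warstwach Aut}. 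If anything, you are slightly more careful than the paper at two points it leaves implicit: checking that $\Gamma_{g_0}$ is connected and nonbipartite before invoking the instability-index formula, and verifying that the translate $g \cdot s$ lands back in $S(\Gamma)$ so that $g' \mapsto g^{-1}g'$ is genuinely onto $\text{Im}(\alpha_{\Gamma_{g_0}})$.
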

\begin{proof}
    We will construct a function from strongly switching elements od $\Gamma$ to set of nonisomoprhic graphs TF-isomorphic to $\Gamma$. Then we will show, that this function agrees on two elements exactly if they are conjugate. We will denote the set of strongly switching elements of $\Gamma$ by $S_0(\Gamma)$.
    \newline
    By Lemma \ref{LEMAT: bijekcja Antymorfizmy <-> Swithing elementy} function $x \cdot$ maps $S(\Gamma)$ to $\text{Ant}(\Gamma)$. Observe, that strongly switching elements are exactly the preimages of elements from $\text{Ant}(\Gamma)$ which maps every vertex $v$ to a vertex from $V \backslash N(v)$. Then, by Proposition \ref{PROP: warunek na istnienie grafu TF-izo}, for every $g \in S_0(\Gamma)$, there exists a graph with adjacency matrix $A_{\Gamma} \cdot m_{x \cdot g}$. We denote the set of unlabeled graphs TF-isomorphic to $\Gamma$ by $\mathcal{G}_{\text{TF}}(\Gamma)$ and the map described above by $G_{\Gamma}: S_0(\Gamma) \longrightarrow \mathcal{G}_{\text{TF}}(\Gamma)$.
    \newline
    By Proposition \ref{PROP: przetlumaczenie TF-izo na macierze i sasiedztwa}, up to permutation of vertices, every graph TF-isomorphic to $\Gamma$ have an adjacency matrix of the form $A \cdot m_{\pi}$ for some permutation of vertices $\pi$. Moreover by Proposition \ref{PROP: warunek na istnienie grafu TF-izo} $\pi$ have to be of the form $x \cdot g$ for some $g \in S_0(\Gamma)$. Hence $G_{\Gamma}$ is a surjection.
    \newline
    Next step in our proof is showing, that function $G_{\Gamma}$ is equal on two elements exactly where those are conjugates. Firstly, by Theorem \ref{TW: kiedy TF-izo sa izo za pomoca Im(alpha)} we conclude that graphs $\Gamma_1$ and $\Gamma_2$ with adjacency matricies $A_{\Gamma} \cdot m_{x \cdot g_1}$ and $A_{\Gamma} \cdot m_{x \cdot g_2}$ are isomorphic if and only if $ {(x \cdot g_1)}^{-1} (x \cdot g_2) \in \text{Im}(\alpha_{\Gamma_1})$. By Proposition \ref{PROP: co sie dzieje z gammą przy przeksztalacaniu grafu na TF-izo} we get that $\gamma_{\Gamma_1} = \varphi_{x \cdot g_1} \circ \gamma_{\Gamma}$, and therefore $\alpha_{\Gamma_1} \equiv \alpha_{x \cdot g_1}$. Last equality indicates that $ {(x \cdot g_1)}^{-1} (x \cdot g_2) \in \text{Im}(\alpha_{x \cdot g_1})$. By Lemma \ref{g i g' z SGamma sprzezone --> g^{-1}g' w Im(alpha) Gamma_g} this is equivalent to the fact that $g_2 \in {(g_1)}_c$.
    \newline 
    Now we know, that $G_{\Gamma}$ is equal on conjugacy classes. We denote the family of conjugacy classes of strongly switching elements by $C_0
    (\Gamma)$. Now we can define a function $\psi_{\Gamma}: C_0(\Gamma) \rightarrow \mathcal{G}_{\text{TF}}(\Gamma)$ by formula $\psi_{\Gamma}: {(g)}_c \mapsto A_{\Gamma} \cdot m_{x\cdot g}$.
    \newline
    Let $\mathcal{C} = (g)_c \in C_0(\Gamma)$. It is bijected by $x \cdot$ to the set $G_0$ of elements $g'_0$ in $\text{Ant}(\Gamma)$ such that $g_0^{-1} g'_0 \in \text{Im}(\alpha_{g_0}) = \text{Im}(\alpha_{G_{\Gamma}(g_0)})$, where $g_0 = x \cdot g$. Therefore we get that $|(g)_c| = |\text{Im}(\alpha_{\psi_{\Gamma}(g_0)})|$, and by Theorem \ref{THM: alpha jest stala na prawostronnych warstwach Aut} we get that it is equal to the instability index of the graph $G_{\Gamma}(g) = \psi_{\Gamma}(\mathcal{C})$, which finishes the proof.
\end{proof}

\begin{cor} \label{COR: rownania dotyczace TF-iso grafow}
    Let $\Gamma$ be a connected reduced nonbipartite graph and let $\text{Ant}_0(\Gamma)$ denote the set of elements from $\text{Ant}(\Gamma)$ which maps vertices to vertices, which are not their neighbours. Then
    $$
    |\text{Ant}_0(\Gamma)| = \sum_{\Gamma' \cong^{\text{TF}} \Gamma} \text{Inst}(\Gamma') \quad \text{and} \quad \frac{|\text{Ant}_0(\Gamma)|}{|\text{Aut}^{\pi}(\Gamma)|} = \sum_{\Gamma' \cong^{\text{TF}} \Gamma} \frac{1}{|\text{Aut}(\Gamma')|}.
    $$
\end{cor}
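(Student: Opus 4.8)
The plan is to read off both identities from Theorem \ref{THM: klasy sprzezenosci strongly switching mapuja sie na TF-iso grafy a ilosc ich elementow to instability indexy} once I have identified $\text{Ant}_0(\Gamma)$ with the strongly switching elements. First I would recall that by Lemma \ref{LEMAT: bijekcja Antymorfizmy <-> Swithing elementy} the map $x\cdot$ is a bijection between $\text{Ant}(\Gamma)$ and $\text{S}(\Gamma)$, and that, as already observed inside the proof of the main theorem, the strongly switching elements $S_0(\Gamma)$ are exactly the preimages under this bijection of those antymorphisms sending each vertex $v$ outside $N(v)$, that is, of $\text{Ant}_0(\Gamma)$. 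Hence $|\text{Ant}_0(\Gamma)| = |S_0(\Gamma)|$.

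Next I would use Proposition \ref{PROP: strongly switching elementy to klasy sprzezen} to note that $S_0(\Gamma)$ is a disjoint union of full conjugacy classes, namely the members of $C_0(\Gamma)$. Summing cardinalities gives $|\text{Ant}_0(\Gamma)| = \sum_{\mathcal{C}\in C_0(\Gamma)} |\mathcal{C}|$. Applying the bijection $\psi_{\Gamma}$ from the main theorem together with its quantitative part, $|\mathcal{C}| = \text{Inst}(\psi_{\Gamma}(\mathcal{C}))$, turns the right-hand side into a sum of instability indices over the unlabeled graphs TF-isomorphic to $\Gamma$. This establishes the first equation $|\text{Ant}_0(\Gamma)| = \sum_{\Gamma' \cong^{\text{TF}} \Gamma} \text{Inst}(\Gamma')$.

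For the second equation I would first record that $\text{Aut}^{\pi}$ is constant along the whole TF-isomorphism class: every $\Gamma' \cong^{\text{TF}} \Gamma$ has, up to relabeling, adjacency matrix $A_{\Gamma}\cdot m_{g_0}$ for some $g_0 \in \text{Ant}(\Gamma)$ by Propositions \ref{PROP: przetlumaczenie TF-izo na macierze i sasiedztwa} and \ref{PROP: warunek na istnienie grafu TF-izo}, and Proposition \ref{PROP: co sie dzieje z gammą przy przeksztalacaniu grafu na TF-izo} then gives $\text{Aut}^{\pi}(\Gamma') = \text{Aut}^{\pi}(\Gamma)$. Combining this with the identity $\text{Inst}(\Gamma') = |\text{Aut}^{\pi}(\Gamma')|/|\text{Aut}(\Gamma')|$ from Theorem \ref{THM: alpha jest stala na prawostronnych warstwach Aut}, each term of the first sum becomes $|\text{Aut}^{\pi}(\Gamma)|/|\text{Aut}(\Gamma')|$. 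Factoring the constant $|\text{Aut}^{\pi}(\Gamma)|$ out of the sum and dividing both sides by it yields the second identity.

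Since every ingredient has already been proved, I do not expect a genuine obstacle; the only point requiring care is the bookkeeping of the two identifications — that $x\cdot$ carries $\text{Ant}_0(\Gamma)$ precisely onto $S_0(\Gamma)$, and that $\text{Aut}^{\pi}$ is genuinely the \emph{same} group (not merely isomorphic) for every graph in the TF-class, which is exactly what permits pulling $|\text{Aut}^{\pi}(\Gamma)|$ outside the sum rather than letting it vary term by term.
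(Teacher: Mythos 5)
Your proposal is correct and follows essentially the same route as the paper: the first identity by summing conjugacy-class sizes over $C_0(\Gamma)$ via the bijection $\psi_{\Gamma}$ and its quantitative part from Theorem \ref{THM: klasy sprzezenosci strongly switching mapuja sie na TF-iso grafy a ilosc ich elementow to instability indexy}, and the second by dividing through by $|\text{Aut}^{\pi}(\Gamma)|$. If anything, you are more careful than the paper's two-line proof, since you explicitly justify the termwise step $\text{Inst}(\Gamma') = |\text{Aut}^{\pi}(\Gamma)|/|\text{Aut}(\Gamma')|$ via Proposition \ref{PROP: co sie dzieje z gammą przy przeksztalacaniu grafu na TF-izo} and Theorem \ref{THM: alpha jest stala na prawostronnych warstwach Aut}, which the paper leaves implicit.
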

\begin{proof}
    For the first equality is enough, note that the set of strongly switching elements have cardinality $|\text{Ant}_0(\Gamma)|$ and $\text{Inst}(\Gamma')$ is the cardinality of the preimage of $\Gamma'$ by the function $\psi_{\Gamma}$. To get the second equality it is enough to divide both sides by $|\text{Aut}^{\pi}(\Gamma)|$.
\end{proof}

\begin{obs} \label{OBS: rownania dotyczace TF-izo ale z grafami z petlami}
    If one include the possibility of graphs with loops, then by similar method as in Theorem \ref{THM: klasy sprzezenosci strongly switching mapuja sie na TF-iso grafy a ilosc ich elementow to instability indexy} for any reduced connected nonbipartite graph $\Gamma$, equalities below follow
    $$
    |\text{Ant}(\Gamma)| = \sum_{\Gamma' \cong^{\text{TF}} \Gamma} \text{Inst}(\Gamma') \quad \text{and} \quad \frac{|\text{Ant}(\Gamma)|}{|\text{Aut}^{\pi}(\Gamma)|} = \sum_{\Gamma' \cong^{\text{TF}} \Gamma} \frac{1}{|\text{Aut}(\Gamma')|}.
    $$
\end{obs}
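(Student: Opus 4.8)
The plan is to re-run the proof of Theorem \ref{THM: klasy sprzezenosci strongly switching mapuja sie na TF-iso grafy a ilosc ich elementow to instability indexy} almost verbatim, the only change being that the zero-diagonal constraint on adjacency matrices is dropped. First I would observe that admitting loops costs us nothing structurally: the adjacency matrix of a looped graph is precisely an arbitrary symmetric $0$-$1$ matrix, and its canonical double cover is still a \emph{simple} bipartite graph, since a loop at $v$ contributes only the ordinary edge $((v,0),(v,1))$. Consequently every preliminary result phrased through $\text{B}\Gamma$, through adjacency matrices, or through the decomposition $\text{Aut}(\text{B}\Gamma)\cong\text{Aut}^{\pi}(\Gamma)\rtimes_{\theta}\mathbb{Z}_2$ remains valid in the looped setting, and reducedness continues to mean that the columns of $A$ are pairwise distinct.

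The single genuinely new ingredient is a looped version of Proposition \ref{PROP: warunek na istnienie grafu TF-izo}: for a reduced $\Gamma$ with adjacency matrix $A$ and a permutation $\psi$ of $V$, there is a (possibly looped) graph $\Gamma_{\psi}$ with adjacency matrix $A\cdot m_{\psi}$ if and only if $\psi\in\text{Ant}(\Gamma)$, and such $\Gamma_{\psi}$ is automatically reduced and TF-isomorphic to $\Gamma$. The argument is the same as before but shorter: one needs $A\cdot m_{\psi}$ only to be symmetric, which by the computation ${(A\cdot m_{\psi})}^{\text{T}}=m_{\psi^{-1}}A=m_{\gamma(\psi)}A$ is exactly the condition $\gamma(\psi)=\psi^{-1}$, i.e.\ $\psi\in\text{Ant}(\Gamma)$; the requirement $(v,\psi(v))\notin E$ is simply no longer imposed, since ones on the diagonal are now legal loops. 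Reducedness and TF-isomorphy follow as before, since $A\cdot m_{\psi}$ merely permutes the columns of $A$.

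With this in hand I would copy the body of Theorem \ref{THM: klasy sprzezenosci strongly switching mapuja sie na TF-iso grafy a ilosc ich elementow to instability indexy}, replacing the set $S_0(\Gamma)$ of strongly switching elements by the whole set $S(\Gamma)$ and $\text{Ant}_0(\Gamma)$ by $\text{Ant}(\Gamma)$. The bijection $x\cdot$ of Lemma \ref{LEMAT: bijekcja Antymorfizmy <-> Swithing elementy} already identifies $\text{Ant}(\Gamma)$ with $S(\Gamma)$, and the looped analog of Proposition \ref{PROP: warunek na istnienie grafu TF-izo} shows that sending $g$ to the looped graph with adjacency matrix $A_{\Gamma}\cdot m_{x\cdot g}$ gives a surjection $G_{\Gamma}\colon S(\Gamma)\to\mathcal{G}^{\text{loop}}_{\text{TF}}(\Gamma)$ onto the isomorphism classes of looped graphs TF-isomorphic to $\Gamma$, surjectivity using Proposition \ref{PROP: przetlumaczenie TF-izo na macierze i sasiedztwa} exactly as before. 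That $G_{\Gamma}(g_1)\cong G_{\Gamma}(g_2)$ precisely when $g_1,g_2$ are conjugate is obtained verbatim from Theorem \ref{TW: kiedy TF-izo sa izo za pomoca Im(alpha)}, Proposition \ref{PROP: co sie dzieje z gammą przy przeksztalacaniu grafu na TF-izo} and Lemma \ref{g i g' z SGamma sprzezone --> g^{-1}g' w Im(alpha) Gamma_g}; here the looped case is in fact cleaner, since $S(\Gamma)$ is automatically closed under conjugation (a conjugate of an order-two part-switching element is again order-two and part-switching, an odd number of switches composing to a switch), so no analog of Proposition \ref{PROP: strongly switching elementy to klasy sprzezen} is needed. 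The map then descends to a bijection $\psi_{\Gamma}$ from conjugacy classes in $S(\Gamma)$ to $\mathcal{G}^{\text{loop}}_{\text{TF}}(\Gamma)$, and the class-size identity $|(g)_c|=|\text{Im}(\alpha_{\psi_{\Gamma}(g)})|=\text{Inst}(\psi_{\Gamma}(g))$ goes through via Theorem \ref{THM: alpha jest stala na prawostronnych warstwach Aut} unchanged.

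Finally I would sum over conjugacy classes: since $x\cdot$ is a bijection, $|\text{Ant}(\Gamma)|=|S(\Gamma)|=\sum_{\mathcal{C}}|\mathcal{C}|=\sum_{\Gamma'\cong^{\text{TF}}\Gamma}\text{Inst}(\Gamma')$, the sum now ranging over looped graphs as well, which is the first equality. The second follows by dividing through by $|\text{Aut}^{\pi}(\Gamma)|$ and using $|\text{Aut}(\text{B}\Gamma')|=|\text{Aut}(\text{B}\Gamma)|=2|\text{Aut}^{\pi}(\Gamma)|$ for every TF-isomorphic $\Gamma'$, exactly as in Corollary \ref{COR: rownania dotyczace TF-iso grafow}. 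I expect the main obstacle to be not any single computation but the uniform verification that reducedness, the double-cover decomposition and the instability index all continue to behave correctly once loops are admitted; this is where care is needed, although each point reduces to the two robust facts that the double cover of a looped graph stays simple and connected and that all the cited statements are diagonal-insensitive matrix identities.
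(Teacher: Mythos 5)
Your proposal is correct and follows precisely the route the paper itself prescribes: the paper gives no separate proof of this observation, only the instruction to repeat the argument of Theorem \ref{THM: klasy sprzezenosci strongly switching mapuja sie na TF-iso grafy a ilosc ich elementow to instability indexy} with loops admitted, and your write-up fleshes out exactly that, with the right key modification (the looped version of Proposition \ref{PROP: warunek na istnienie grafu TF-izo}, where only symmetry of $A\cdot m_{\psi}$ is required, so the condition reduces to $\psi\in\text{Ant}(\Gamma)$) and the correct observation that Proposition \ref{PROP: strongly switching elementy to klasy sprzezen} becomes superfluous because $S(\Gamma)$ is closed under conjugation. All the cited matrix identities and the semidirect-product decomposition are indeed diagonal-insensitive, so the verbatim transfer you describe is sound.
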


Next result is a straightforward application of Corollary \ref{COR: rownania dotyczace TF-iso grafow}, however it will lead us to an interesting question.

\begin{cor}
    Let $\mathcal{G}$ be a finite family of unlabeled reduced connected nonbipartite graphs such that if $\Gamma \cong^{\text{TF}} \Gamma'$ and $\Gamma \in \mathcal{G}$, then $\Gamma' \in \mathcal{G}$. Then the following holds
    $$
    \mathbb{E}(|\text{Aut}(\Gamma)|) \geq \mathbb{E}(|\{\Gamma' \mid \Gamma' \cong^{\text{TF}} \Gamma \}|).
    $$
    By $\mathbb{E}$'' we mean the expected value, or equivalently the arithmetic mean over all elements $\Gamma$ from $\mathcal{G}$. 
\end{cor}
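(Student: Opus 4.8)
The plan is to exploit the fact that both sides of the inequality are averages over $\mathcal{G}$ that are constant on each TF-isomorphism class, and then reduce the whole statement to a single Cauchy--Schwarz inequality applied class by class. First I would partition $\mathcal{G}$ into its TF-isomorphism classes $\mathcal{C}_1, \ldots, \mathcal{C}_k$, writing $n_i = |\mathcal{C}_i|$ and $N = \sum_i n_i = |\mathcal{G}|$. Because $\mathcal{G}$ is closed under $\cong^{\text{TF}}$, for a graph $\Gamma \in \mathcal{C}_i$ the set $\{\Gamma' \mid \Gamma' \cong^{\text{TF}} \Gamma\}$ is exactly $\mathcal{C}_i$, so $|\{\Gamma' \mid \Gamma' \cong^{\text{TF}} \Gamma\}| = n_i$. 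Consequently the right-hand side rewrites as
$$
\mathbb{E}(|\{\Gamma' \mid \Gamma' \cong^{\text{TF}} \Gamma\}|) = \frac{1}{N}\sum_{i=1}^{k} n_i \cdot n_i = \frac{1}{N}\sum_{i=1}^{k} n_i^2,
$$
while the left-hand side is $\frac{1}{N}\sum_{i=1}^{k}\sum_{\Gamma\in\mathcal{C}_i}|\text{Aut}(\Gamma)|$. So it suffices to prove, for each class separately, that $\sum_{\Gamma\in\mathcal{C}_i}|\text{Aut}(\Gamma)| \geq n_i^2$.

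Next I would bring in Corollary \ref{COR: rownania dotyczace TF-iso grafow}. Fixing any representative $\Gamma \in \mathcal{C}_i$, its second identity reads $\frac{|\text{Ant}_0(\Gamma)|}{|\text{Aut}^{\pi}(\Gamma)|} = \sum_{\Gamma'\cong^{\text{TF}}\Gamma}\frac{1}{|\text{Aut}(\Gamma')|}$, where the sum ranges over the unlabeled graphs TF-isomorphic to $\Gamma$, that is, precisely over $\mathcal{C}_i$. The crucial observation is that $\text{Ant}_0(\Gamma) \subseteq \text{Ant}(\Gamma) \subseteq \text{Aut}^{\pi}(\Gamma)$, so the left-hand ratio is at most $1$; hence $\sum_{\Gamma'\in\mathcal{C}_i}\frac{1}{|\text{Aut}(\Gamma')|}\leq 1$ for every class $\mathcal{C}_i$.

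Finally I would close with Cauchy--Schwarz (equivalently AM--HM): for the $n_i$ positive numbers $|\text{Aut}(\Gamma)|$, $\Gamma\in\mathcal{C}_i$, one has $\left(\sum_{\Gamma\in\mathcal{C}_i}|\text{Aut}(\Gamma)|\right)\left(\sum_{\Gamma\in\mathcal{C}_i}\frac{1}{|\text{Aut}(\Gamma)|}\right) \geq n_i^2$. Since the second factor is at most $1$ by the previous step, the first factor is at least $n_i^2$, which is exactly the per-class inequality needed. Summing over $i$ and dividing by $N$ yields the claim. I do not anticipate a serious obstacle; the only points demanding care are verifying that $|\{\Gamma'\mid\Gamma'\cong^{\text{TF}}\Gamma\}|$ really is constant on each class (this is where closure of $\mathcal{G}$ under $\cong^{\text{TF}}$ is used) and confirming that the sum in Corollary \ref{COR: rownania dotyczace TF-iso grafow} is taken over unlabeled graphs, so that it coincides with the sum over $\mathcal{C}_i$ without multiplicity issues.
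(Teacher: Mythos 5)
Your proof is correct and follows essentially the same route as the paper: both partition $\mathcal{G}$ into $\cong^{\text{TF}}$-classes, bound $\sum_{\Gamma'\cong^{\text{TF}}\Gamma} \frac{1}{|\text{Aut}(\Gamma')|}$ by $1$ via the second identity of Corollary \ref{COR: rownania dotyczace TF-iso grafow} together with $\text{Ant}_0(\Gamma)\subseteq\text{Aut}^{\pi}(\Gamma)$, and conclude per class by the AM--HM (equivalently Cauchy--Schwarz) inequality. Your write-up is in fact slightly more careful than the paper's, making the reduction $\sum_{\Gamma\in\mathcal{C}_i}|\text{Aut}(\Gamma)|\geq n_i^2$ and the constancy of the right-hand side on each class fully explicit.
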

\begin{proof}
    We can divide $\mathcal{G}$ with respect to the relation ''$\cong^{\text{TF}}$''. We prove the statement of the theorem for each equivalence class on its own. Let $\mathcal{G}_0$ be a such equivalence class. We can write the following inequality
    $$
    1 \geq \frac{|\text{Ant}(\Gamma)|}{|\text{Aut}^{\pi}(\Gamma)|} = \sum_{\Gamma' \cong^{\text{TF}} \Gamma} \frac{1}{|\text{Aut}(\Gamma')|} \geq |\{\Gamma' \mid \Gamma' \cong^{\text{TF}} \Gamma \}| \cdot \frac{1}{\mathbb{E}(|\text{Aut}(\Gamma)|)} \text{ for } \mathcal{G}_0.
    $$
    First inequality follows from the fact that $\text{Ant}(\Gamma) \subseteq \text{Aut}^{\pi}(\Gamma)$, equality in the middle is the statement of Corollary \ref{COR: rownania dotyczace TF-iso grafow} and the inequality on the right follows from the inequality between harmonic and arithmetic mean. Multiplying both sides by $\mathbb{E}(|\text{Aut}(\Gamma)|)$ proves that 
    $$
    \mathbb{E}(|\text{Aut}(\Gamma)|) \geq |\{\Gamma' \mid \Gamma' \cong^{\text{TF}} \Gamma \}| = \mathbb{E}(|\{\Gamma' \mid \Gamma' \cong^{\text{TF}} \Gamma \}|),
    $$
    since $\mathcal{G}_0$ consists only of graphs which are TF-isomorphic to $\Gamma$.
\end{proof}

    The above result suspects the following
\begin{que} \label{QUE: czy zawsze Aut > TF-iso graphs?}
    Is it true that for all connected reduced nonbipartite graphs $\Gamma$ that 
    $$
    |\text{Aut}(\Gamma)| \geq |\{\Gamma' \mid \Gamma' \cong^{\text{TF}} \Gamma \}|?
    $$ 
\end{que}

\section{Construction of (H,$\sigma$)-graphs with any number and type of TF-isomorphic graphs} \label{SECTION: Construction of (H,sigma)-graphs with any number and type of TF-isomorphic graphs}

In this section we will show, that we are able to manipulate the number and the type of graphs TF-isomorphic to a given one. To state our main theorem we have to describe $S(\Gamma)$ in terms of the group $\text{H} \rtimes_{\theta} \mathbb{Z}_2$.

\begin{defi}
     Let $\text{H}$ be a finite group and $\sigma \in \text{Aut}(\text{H})$ be an automorphism of $\text{H}$ such that $\sigma^{2} \equiv Id$. By $S(\text{H},\sigma)$ we understand the subset of elements $g$ from a group $\text{H} \rtimes_{\theta} \mathbb{Z}_2$ such that $g \notin \text{H} \times \{e\}$ and $g^2 = e$. We will stick to the convention that $\mathbb{Z}_2 = \{e,x\}$ and the semidirect product $\text{H} \rtimes_{\theta} \mathbb{Z}_2$ is defined by $\theta(x) \equiv \sigma$.
\end{defi}

We also refer to subgroup $\text{H} \times \{e\}$ of $\text{H} \rtimes_{\theta} \mathbb{Z}_2$ by $\text{H}$ for simplicity.

\begin{defi}
    Let $\Gamma$ be a connected reduced nonbipartite graph. We say, that $\Gamma$ \textit{achieves} subset $\mathcal{C}$ of $S(\text{H},\sigma)$ if and only if $\Gamma$ is a $(\text{H},\sigma)$-graph and $\mathcal{C}$ is exactly the image of strongly switching elements of $\Gamma$ after the isomorphism from $\text{Aut}(\text{B}\Gamma)$ to $\text{H} \rtimes \mathbb{Z}_2$.
    \newline
    Let ${\mathcal{C}}$ be a subset of $S(\text{H},\sigma)$. We call $\mathcal{C}$ \textit{achievable} if and only if there exist a connected reduced nonbipartite graph $\Gamma$ which achieves $\mathcal{C}$.
\end{defi}

Observation below narrows our set of possible achievable subsets of $S(\text{H},\sigma)$.

\begin{obs} \label{OBS: every achivable subset of S(H,sigma) is sum of conj classes}
    Every achievable subset $\mathcal{C}$ of $S(\text{H},\sigma)$ is a sum of conjugacy classes from $\text{H} \rtimes_{\theta} \mathbb{Z}_2$ which contains $x$.
\end{obs}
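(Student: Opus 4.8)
The plan is to unwind the definition of achievability and reduce the whole statement to the conjugation-invariance already established for strongly switching elements. By definition, if $\mathcal{C}$ is achievable there is an $(\text{H},\sigma)$-graph $\Gamma$ together with the natural isomorphism $\Phi: \text{Aut}(\text{B}\Gamma) \cong \text{Aut}^{\pi}(\Gamma) \rtimes_{\theta} \mathbb{Z}_2 \to \text{H} \rtimes_{\theta} \mathbb{Z}_2$ (induced by the isomorphism $\psi$ with $\sigma = \psi \circ \gamma \circ \psi^{-1}$), under which $\mathcal{C}$ is exactly the image of the set $S_0(\Gamma)$ of strongly switching elements of $\Gamma$. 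Since $\Phi$ is a group isomorphism, it carries conjugacy classes of $\text{Aut}(\text{B}\Gamma)$ bijectively onto conjugacy classes of $\text{H} \rtimes_{\theta} \mathbb{Z}_2$. Hence it suffices to prove two things: that $S_0(\Gamma)$ is a union of conjugacy classes of $\text{Aut}(\text{B}\Gamma)$, and that the class ${(x)}_{c}$ is among those that occur.

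The first point I would read off directly from Proposition \ref{PROP: strongly switching elementy to klasy sprzezen}, which states precisely that if $g$ is strongly switching then every element of ${(g)}_{c}$ is strongly switching as well. Thus $S_0(\Gamma)$ is closed under conjugation, i.e. a union of conjugacy classes, and transporting along $\Phi$ shows that $\mathcal{C}$ is a union of conjugacy classes of $\text{H} \rtimes_{\theta} \mathbb{Z}_2$.

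For the second point I would verify that $x$ always lies in $\mathcal{C}$. By Lemma \ref{LEMAT: bijekcja Antymorfizmy <-> Swithing elementy} the flip automorphism $(v,0) \leftrightarrow (v,1)$ of $\text{B}\Gamma$ equals $x \cdot e$, where $e \in \text{Ant}(\Gamma)$ is the identity permutation (indeed $\gamma(e) = e = e^{-1}$). Using the characterization that $x \cdot g_0$ is strongly switching exactly when $g_0$ sends every vertex $v$ into $V \setminus N(v)$, the flip is strongly switching because $(v,e(v)) = (v,v) \notin E$ for every vertex, which holds since $\Gamma$ is loopless. Its image under $\Phi$ is $(\psi(e),x) = x$, so $x \in \mathcal{C}$ and ${(x)}_{c}$ is one of the classes whose union is $\mathcal{C}$.

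The argument is short because essentially all of its content is borrowed from earlier results; the only place demanding any real care is the last paragraph, where one must confirm that the distinguished flip element really is strongly switching in every $(\text{H},\sigma)$-graph. This is exactly where looplessness is used, and it is also what upgrades the conclusion from the bare assertion ``$\mathcal{C}$ is a union of conjugacy classes'' to the sharper statement that the class containing $x$ is always present.
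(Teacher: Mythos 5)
Your proposal is correct and takes essentially the same route as the paper: its one-line proof likewise applies Proposition \ref{PROP: strongly switching elementy to klasy sprzezen} to get closure under conjugation, and then notes that $x \cdot x = e$ and $A_{\Gamma} \cdot m_{e} = A_{\Gamma}$, which is exactly your final paragraph (the flip $x \cdot e$ corresponds to the identity antymorphism, is strongly switching because $\Gamma$ is loopless, and maps to $x$ under the isomorphism). The only difference is presentational: the paper certifies that $x$ is strongly switching via the adjacency-matrix criterion of Proposition \ref{PROP: warunek na istnienie grafu TF-izo}, while you check the defining condition $(v,v) \notin E$ directly.
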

\begin{proof}
    It is enough, to apply Proposition \ref{PROP: strongly switching elementy to klasy sprzezen} for any graph which realizes $\mathcal{C}$ and note, that $x\cdot x = e$ and $A_{\Gamma} \cdot m_{e} = A_{\Gamma}$ for any graph.
\end{proof}

For the purpose of our next theorem we have to introduce a concept of the Generalized Cayley Graphs which were previously studied in \cite{zbMATH00064637} and \cite{zbMATH07081656}.

\begin{defi} \label{DEFI: Generalized Cayley Graphs}
    Let $\text{H}$ be a finite group and let $\sigma \in \text{Aut}(\text{H})$ be such that $\sigma^{2} \equiv Id$. Let also $S \subseteq \text{H} \backslash \text{Im}(\alpha_{\gamma})$ be such that $S = {\sigma(S)}^{-1}$. Then $\text{GCay}(\text{H},\sigma,S)$ is a graph with vertices labeled by elements of $\text{H}$ and edges between $h_1$ and $h_2$ if and only if ${h_1}^{-1} \cdot {\sigma(h_2)} \in S$.   
\end{defi}

It is well known, that $\text{H} \leq \text{Aut}^{\pi}(\text{GCay}(\text{H},\sigma,S))$ and $\gamma {|}_{\text{H}} \equiv \sigma$. We will use Generalized Cayley Graphs to modify the construction from Theorem \ref{konstrukcja (H,sigma) grafow} in such a way, that any sum of conjugacy classes from $S(\text{H},\sigma)$ will be achieved by our newly constructed graph. 

\begin{thm}
    Let $\text{H}$ be a finite group and let $\sigma \in \text{Aut}(\text{H})$ be such that $\sigma^{2} \equiv Id$. Let $\mathcal{C}$ be a sum of conjugacy classes from $S(\text{H},\sigma)$ which contains $x$ (nontrivial element of $\mathbb{Z}_2$). Then $\mathcal{C}$ is achievable.
\end{thm}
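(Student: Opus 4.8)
The plan is to start from the $(\text{H},\sigma)$-graph $\Gamma_{(\text{H},\sigma)}$ furnished by Theorem \ref{konstrukcja (H,sigma) grafow} and to \emph{install a generalized Cayley graph on one of its orbits}, choosing its connection set $S$ so that precisely the conjugacy classes missing from $\mathcal{C}$ are ``broken'' (cease to consist of strongly switching elements) while the classes inside $\mathcal{C}$ survive. Recall that in $\Gamma_{(\text{H},\sigma)}$ every orbit of $\text{Aut}^{\pi}$ induces a coclique, so by Proposition \ref{PROP: warunek na istnienie grafu TF-izo} \emph{every} element of $\text{Ant}(\Gamma_{(\text{H},\sigma)})$ is strongly switching; thus the base graph achieves all of $S(\text{H},\sigma)$, and the task is to suppress the unwanted classes by introducing intra-orbit edges.

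First I would recast the strongly-switching condition in group language. Writing $A = \{h \in \text{H} : \sigma(h) = h^{-1}\}$ for the set of first coordinates of $S(\text{H},\sigma)$, a short computation shows that the conjugacy classes of $\text{H} \rtimes_{\theta} \mathbb{Z}_2$ meeting $S(\text{H},\sigma)$ are exactly the $\sigma$-twisted classes $K_{h} = \{ v h \sigma(v)^{-1} : v \in \text{H}\}$ for $h \in A$ (conjugation by $x$ sends $(h,x)$ to $(h^{-1},x)$, which already lies in $K_h$). One checks that every $w \in K_h$ satisfies $\sigma(w)^{-1} = w$, so $K_h \subseteq A$, and that $K_e = \text{Im}(\alpha)$, where $\alpha(h) = h^{-1}\sigma(h)$. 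I would then set
$$
S = \{h \in A : (h,x) \notin \mathcal{C}\},
$$
which, since $\mathcal{C}$ is a union of conjugacy classes, is a union of twisted classes $K_h$. Because each such $K_h$ is pointwise fixed by $w \mapsto \sigma(w)^{-1}$, the symmetry requirement $S = \sigma(S)^{-1}$ of Definition \ref{DEFI: Generalized Cayley Graphs} holds automatically. The loop-freeness requirement $S \cap \text{Im}(\alpha) = \emptyset$ is exactly where the hypothesis $x \in \mathcal{C}$ is used: $\text{Im}(\alpha) = K_e$ is the twisted class of $x = (e,x)$, and $x \in \mathcal{C}$ forces $K_e$ to be omitted from $S$.

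Next I would build $\Gamma$ by adding, on a single orbit $\text{H} \times \{i^*\}$ identified with $\text{H}$, the edges of $\text{GCay}(\text{H},\sigma,S)$, keeping all other orbits as cocliques and all skeleton edges unchanged; connectedness, nonbipartiteness, and reducedness survive this routinely (distinct vertices already differ on their skeleton neighbours). Since $\text{H} \leq \text{Aut}^{\pi}(\text{GCay}(\text{H},\sigma,S))$ with $\gamma|_{\text{H}} \equiv \sigma$, left multiplications preserve both the skeleton and the new Cayley edges, so $\text{H} \leq \text{Aut}^{\pi}(\Gamma)$ with $\gamma$ restricting to $\sigma$. The crux, and the step I expect to be the main obstacle, is showing that \emph{no new} two-fold projections appear, i.e. that $\Gamma$ is still an $(\text{H},\sigma)$-graph. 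The forcing argument of Lemma \ref{lemat o trojkącie z ,,blobow'' w konstrukcji}, which pins any $\pi_0 \in \text{Aut}^{\pi}(\Gamma)$ to a single left multiplication across all orbits, uses only the cross-orbit $\sigma$-edges of the skeleton and so survives verbatim; the delicate part is that adding intra-orbit edges shifts every degree in $\text{H}\times\{i^*\}$ uniformly by $|S|$, so the invariants $\text{Ne}^k(\text{deg})$ (Lemmas \ref{Lemat: Niezmienniki Aut^pi} and \ref{LEM: niezmienniki R(n_0) determinujące wierzcholki}) must still separate the labels $i$ to guarantee $\{\text{H}\times\{i\}\}$ refines the orbit partition. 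I would handle this either by choosing $i^*$ with a sufficiently distinctive degree signature or, more robustly, by enlarging the skeleton $R(n_0)$ with one extra rigid vertex carrying the Cayley orbit, so that the degree shift cannot collide with another label.

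Finally, with rigidity established, I would read off the strongly switching elements. As every orbit except $\text{H}\times\{i^*\}$ is a coclique and each $\pi_0 = h\cdot$ preserves orbits, the element $(h,x)$ is strongly switching iff $(v, h v) \notin E_{\text{GCay}}$ for all $v \in \text{H}$, i.e. iff $v^{-1}h^{-1}\sigma(v) \notin S$ for all $v$, i.e. iff $K_{h} \cap S = \emptyset$ (using $K_{h^{-1}} = K_h$). By the choice of $S$ this occurs exactly when $(h,x) \in \mathcal{C}$. Hence the image of the strongly switching elements of $\Gamma$ under the isomorphism $\text{Aut}(\text{B}\Gamma) \to \text{H} \rtimes \mathbb{Z}_2$ is precisely $\mathcal{C}$, so $\Gamma$ achieves $\mathcal{C}$ and $\mathcal{C}$ is achievable; together with Observation \ref{OBS: every achivable subset of S(H,sigma) is sum of conj classes} this shows the stated hypotheses on $\mathcal{C}$ are both necessary and sufficient.
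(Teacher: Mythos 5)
Your proposal is correct and takes essentially the same route as the paper: the paper also sets $S = x \cdot (S(\text{H},\sigma) \setminus \mathcal{C})$ (which coincides with your set of first coordinates, since conjugation by $x$ makes the complement inverse-closed), glues $\text{GCay}(\text{H},\sigma,S)$ onto a single orbit of $\Gamma_{(\text{H},\sigma),X}$, and reads off the strongly switching elements via Proposition \ref{PROP: warunek na istnienie grafu TF-izo} exactly as you do. The one step you hedge on---preserving rigidity after adding intra-orbit edges---is resolved in the paper by your first suggested option: the Cayley edges are placed on $\text{H} \times \{n_0+2\}$, whose vertices are the unique ones of maximal degree in the modified graph, so every two-fold projection preserves that orbit setwise and hence restricts to a two-fold projection of the original graph.
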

\begin{proof} 
    Let $S = x \cdot ( S(\text{H},\sigma) \backslash \mathcal{C} )$, and note that $\sigma(S) = S^{-1}$, because by Lemma \ref{LEMAT: bijekcja Antymorfizmy <-> Swithing elementy} it is true pointwise. Therefore ${\sigma(S)}^{-1} = {(S^{-1})}^{-1} = S$. Consider now a graph $\Gamma_{(\text{H},\sigma), X}$ ( cf. Definition \ref{DEFI: Gamma_{H,sigma, X}}).
    Add edges of $\text{GCay}(\text{H},\sigma,S)$ between vertices $\text{H} \times \{n_0 + 2\}$ with respect to projection on the first factor. We will refer to this graph as $\Gamma_0$.
    \newline
    The vertices in $\text{H} \times \{n_0 + 2\}$ are the only ones with the highest degree among all vertices of $\Gamma_0$. Therefore, every permutation form $\text{Aut}^{\pi}(\Gamma_0)$ permutes the set $\text{H} \times \{n_0 + 2\}$.  This means, that  if $\pi_0 \in \text{Aut}^{\pi}(\Gamma_0)$ then $\pi_0 \in \text{Aut}^{\pi}(\Gamma_{(\text{H},\sigma), X})$ since neighbourhoods of all vertices which are not in $\text{H} \times \{n_0 + 2\}$ are the same in those graphs, and any neighbourhood $N(v_0)$ of vertex from $\text{H} \times \{n_0 + 2\}$ can be break into two parts. First one consists of vertices which are in $\text{H} \times \{n_0 + 2\}$ and the second one consists of all other vertices inside $N(v_0)$. All vertices in $N(v_0)$ which are in $\text{H} \times \{n_0 + 2\}$ are exactly ones obtained from edges of $\text{GCay}(\text{H},\sigma,S)$. Since we removed all of them and $\pi_0(N(v_0)) = N(\gamma(\pi_0)(v_0))$ is a neighbourhood of some vertex from $\text{H} \times \{n_0 + 2\}$, $\pi_0$ maps all neighbourhoods of $\Gamma_{(\text{H},\sigma), X}$ to neighbourhoods of $\Gamma_{(\text{H},\sigma), X}$.
    \newline
    From Theorem \ref{konstrukcja (H,sigma) grafow} we get that, $\text{Aut}^{\pi}(\Gamma_0) \leq \text{Aut}^{\pi}(\Gamma_{(\text{H},\sigma), X}) \cong \text{H}$, but since added verticies form a genralized Cayley graph, $\text{Aut}^{\pi}(\Gamma_0) = \text{Aut}^{\pi}(\Gamma_{(\text{H},\sigma), X})$. To end the proof we have to show, that $\Gamma_0$ indeed achieves $\mathcal{C}$.  For that purpose we will apply Proposition \ref{PROP: warunek na istnienie grafu TF-izo}. No element $s_0 \in x \cdot (S(\text{H},\sigma) \backslash \mathcal{C})$ corresponds to a graph, since there is and edge between vertices $(e,n_0+2)$ and $(s_0,n_0+2)$ by definition of $\text{GCay}(\text{H},\sigma,S)$. Pick any $h_0 \in x \cdot \mathcal{C}$. Take vertex $(h,n_0+2)$ for any $h \in \text{H}$. Then vertices $(h_0 \cdot h,n_0+2)$ and $(h,n_0+2)$ are connected if and only if ${(h_0 h)}^{-1}\sigma(h) = h^{-1} \cdot h_0 \cdot \sigma(h) \in S$. However, since $h_0 \notin S$, we get that $x\cdot h_0 \notin S(\text{H},\sigma) \backslash \mathcal{C}$, and since a set $S(\text{H},\sigma) \backslash \mathcal{C}$ is sum of conjugacy classes, ${\sigma(h)}^{-1} (x \cdot h_0) \sigma(h) = x \cdot (h^{-1} \cdot h_0 \cdot \sigma(h)) \notin S(\text{H},\sigma) \backslash \mathcal{C}$. Therefore $h^{-1} \cdot h_0 \cdot \sigma(h) \notin S$. This shows that there exists a graph with adjacency matrix $A_{\Gamma_0} \cdot m_{h_0}$.
\end{proof}

\section{Further notes on nonisomorphic graphs which are TF-isomorphic to a given graph} \label{SECTION: Further notes on nonisomorphic graphs which are TF-isomorphic to a given graph}

    Now we investigate how much TF-isomorphic graphs can $(\text{H},\sigma)$-graph possibly have. We denote the number of distinct conjugacy classes inside $S(\text{H},\sigma)$ by $\mathcal{G}_{\text{TF}}(\text{H},\sigma)$, as every conjugacy class might correspond to one graph.

\begin{thm} \label{THM: ogarniczenie na ilosc TF-izo grafow - 2-grupa Sylowa}
     Let $\text{H}$ be a finite group and $\sigma \in \text{Aut}(\text{H})$ be such that $\sigma^2 \equiv Id$. Then there exists such $2$-Sylow subgroup $P_2 \leq \text{H}$, such that $\mathcal{G}_{\text{TF}}(\text{H},\sigma) \leq \mathcal{G}_{\text{TF}}(P_2,\sigma {|}_{P_2})$.
\end{thm}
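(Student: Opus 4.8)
The plan is to reinterpret everything inside the group $G = \text{H} \rtimes_\theta \mathbb{Z}_2$ and reduce the statement to a routine Sylow argument. First I would observe that, writing a general element of the nontrivial coset as $hx$, one has $(hx)^2 = h\sigma(h)$, so $S(\text{H},\sigma)$ is exactly the set of involutions of $G$ lying in the coset $\text{H}x = G \setminus \text{H}$. Since $\text{H}$ is normal and conjugation preserves both the order of an element and its coset modulo $\text{H}$, this set is a union of conjugacy classes of $G$, and $\mathcal{G}_{\text{TF}}(\text{H},\sigma)$ is precisely the number of $G$-conjugacy classes of involutions contained in $G \setminus \text{H}$. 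The identical reading applies to any $\sigma$-invariant subgroup, so the theorem becomes the assertion that this count can only increase when we pass to a suitable Sylow $2$-subgroup.

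Next I would produce the required $\sigma$-invariant Sylow $2$-subgroup. Since $\langle x \rangle$ is a $2$-subgroup of $G$, it lies in some Sylow $2$-subgroup $Q$ of $G$; set $P_2 = Q \cap \text{H}$. Because $x \in Q \setminus \text{H}$ we have $Q/P_2 \cong G/\text{H} \cong \mathbb{Z}_2$, and as $|Q|$ is the full $2$-part of $|G| = 2|\text{H}|$ this makes $P_2$ a Sylow $2$-subgroup of $\text{H}$ with $|Q| = 2|P_2|$. Since $x \in Q$ and $\text{H} \trianglelefteq G$, conjugation by $x$ maps $P_2 = Q \cap \text{H}$ into itself, i.e. $\sigma(P_2) = P_2$; thus $\sigma|_{P_2} \in \text{Aut}(P_2)$ with $(\sigma|_{P_2})^2 \equiv Id$, and $Q = \langle P_2, x\rangle = P_2 \rtimes_{\sigma|_{P_2}} \mathbb{Z}_2$. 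Under this identification $S(P_2,\sigma|_{P_2})$ is exactly the set of involutions of $Q$ lying in $Q \setminus P_2$, and $\mathcal{G}_{\text{TF}}(P_2,\sigma|_{P_2})$ is the number of $Q$-conjugacy classes of such involutions.

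Finally I would compare the two counts by Sylow's theorem. Let $g$ be any involution in $G \setminus \text{H}$. Then $\langle g\rangle$ is a $2$-subgroup, hence lies in some Sylow $2$-subgroup of $G$, and since all Sylow $2$-subgroups are $G$-conjugate there is $t \in G$ with $tgt^{-1} \in Q$; as conjugation preserves the coset modulo $\text{H}$ we get $tgt^{-1} \in Q \setminus P_2$. Consequently every $G$-class of involutions in $G \setminus \text{H}$ meets $S(P_2,\sigma|_{P_2})$. The map sending the $Q$-class of an involution in $Q\setminus P_2$ to its $G$-class is therefore well defined and surjective, which forces $\mathcal{G}_{\text{TF}}(P_2,\sigma|_{P_2}) \geq \mathcal{G}_{\text{TF}}(\text{H},\sigma)$, as desired.

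The reframing in the first step is the conceptual heart; after it, the argument is standard group theory. The step requiring the most care is producing the $\sigma$-invariant Sylow subgroup: the point is to choose the Sylow $2$-subgroup of $G$ \emph{through} $x$ rather than an arbitrary one, so that $Q$ genuinely decomposes as $P_2 \rtimes_{\sigma|_{P_2}} \mathbb{Z}_2$, together with the observation that conjugation preserves $G \setminus \text{H}$, which is what keeps the Sylow conjugacy inside the relevant set of involutions.
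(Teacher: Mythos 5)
Your proposal is correct, and its overall strategy coincides with the paper's: identify $\mathcal{G}_{\text{TF}}(\text{H},\sigma)$ with the count of conjugacy classes of involutions in the nontrivial coset of $\text{H}$ in $G = \text{H} \rtimes_{\theta} \mathbb{Z}_2$, exhibit a Sylow $2$-subgroup $Q$ of $G$ of the form $P_2 \rtimes_{\sigma|_{P_2}} \mathbb{Z}_2$ with $P_2$ Sylow in $\text{H}$, conjugate every such involution into $Q$, and conclude via the surjection from $Q$-classes onto $G$-classes. The one point where you genuinely diverge is the production of the $\sigma$-invariant Sylow subgroup: the paper works bottom-up inside $\text{H}$, letting $\sigma$ act on the set $\mathcal{P}_2$ of Sylow $2$-subgroups of $\text{H}$ and extracting a fixed point from the parity argument (the involution $\sigma_{\mathcal{P}_2}$ on an odd-sized set must fix some $P_2$), then assembling $Q_2 = P_2 \rtimes \mathbb{Z}_2$; you work top-down in $G$, choosing a Sylow $2$-subgroup $Q$ \emph{through} $\langle x \rangle$ and setting $P_2 = Q \cap \text{H}$, with $\sigma$-invariance of $P_2$ falling out of $x \in Q$ and normality of $\text{H}$. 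Your construction buys directness --- $x \in Q$ is automatic and no counting of Sylow subgroups is needed --- whereas the paper's parity trick is self-contained within $\text{H}$ and reuses the same fixed-point device that appears elsewhere in the paper (e.g.\ in Theorem \ref{THM: parzystosc H a parzystosc Fix(sigma)}). Your final paragraph, phrasing the conclusion as a well-defined surjective map from $Q$-classes of involutions in $Q \setminus P_2$ onto $G$-classes in $S(\text{H},\sigma)$, is also a slightly more careful rendering of the paper's closing counting sentence, which asserts the same inequality more tersely.
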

\begin{proof}
    Consider the set $\mathcal{P}_2$ of all Sylow $2$-subgroups of $\text{H}$. Function $\sigma_{\mathcal{P}_2} : \mathcal{P}_2 \longrightarrow \mathcal{P}_2$ acts by applying $\sigma$ setwise. Since $\sigma^2 \equiv Id$ and $\mathcal{P}_2$ have an odd number of elements, $\sigma_{\mathcal{P}_2}$ have a fixed point. Call this fixed point $P_2$. Consider a group $\text{H} \rtimes \mathbb{Z}_2$, $\mathbb{Z}_2 = \{e,x\}$, and a set $Q_2 = P_2 \times \{0,1\}$. For any $h \in \text{H}$ we have $x^{-1} \cdot h \cdot x = \sigma(h)$ and $P_2$ is fixed setwise by $\sigma$ hence $Q_2$ is a group. Since $P_2$ was a Sylow $2$-subgroup of $H$, $Q_2$ is a Sylow $2$-subgroup of $\text{H} \rtimes \mathbb{Z}_2$. 
    \newline
    Let $s_0$ be an element from $S(\text{H},\sigma)$. Since it is of order two, it is contained in some Sylow $2$-subgroup $Q'_2$. From the fact, that all Sylow $2$-subgroups are conjugates of each other we deduce, that there exists $s'_0 \in Q_2$ such that $s'_0 \in {(s_0)}_c$, and therefore $s'_0 \in S(\text{H},\sigma)$. Therefore every conjugacy class inside $S(\text{H},\sigma)$ contains an element from $Q_2$. The number of conjugacy classes with respect to $\text{H}$ inside $S(\text{H},\sigma)$ is equal to number of conjugacy classes inside $S(\text{H},\sigma) \cap Q_2$, and therefore is less or equal to the number of conjugacy classes inside $S(P_2,\sigma {|}_{P_2})$ with respect to $P_2$, because $P_2 \leq \text{H}$.
\end{proof}

Above theorem shows us, that it is sufficient to consider Question \ref{QUE: czy zawsze Aut > TF-iso graphs?} in case of $2$-groups. Corollary below is useful whenever we do not know exactly how $\sigma$ acts on $\text{Aut}^{\pi}(\Gamma)$.

\begin{cor} \label{COR: ogaraniczenie na ilosc TF-izo - 2^k}
      Let $\text{H}$ be a finite group and $\sigma \in \text{Aut}(\text{H})$ be such that $\sigma^2 \equiv Id$. If $|\text{H}| = 2^k \cdot m$ where $k \in {\mathbb{Z}}_{\geq 0}$ and $m$ is odd, then $\mathcal{G}_{\text{TF}}(\text{H},\sigma) \leq 2^k$.
\end{cor}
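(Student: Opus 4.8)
The plan is to invoke Theorem \ref{THM: ogarniczenie na ilosc TF-izo grafow - 2-grupa Sylowa} to pass to a $2$-group and then finish with a crude cardinality count. First I would apply that theorem to obtain a Sylow $2$-subgroup $P_2 \leq \text{H}$, necessarily of order $2^k$, which is fixed setwise by $\sigma$ and satisfies $\mathcal{G}_{\text{TF}}(\text{H},\sigma) \leq \mathcal{G}_{\text{TF}}(P_2,\sigma{|}_{P_2})$. Since $\sigma(P_2) = P_2$, the restriction $\sigma{|}_{P_2}$ is an automorphism of $P_2$ with $(\sigma{|}_{P_2})^2 \equiv Id$, so the quantity $\mathcal{G}_{\text{TF}}(P_2,\sigma{|}_{P_2})$ is well defined, and it suffices to show $\mathcal{G}_{\text{TF}}(P_2,\sigma{|}_{P_2}) \leq 2^k$.

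The key observation for the second step is that every element of $S(P_2,\sigma{|}_{P_2})$ lies in the single coset $P_2 \times \{x\}$ of $P_2 \rtimes_{\theta} \mathbb{Z}_2$: by definition such an element does not lie in $P_2 \times \{e\}$, and the index of $P_2 \times \{e\}$ is two, so the only other coset is $P_2 \times \{x\}$. As $|P_2 \times \{x\}| = |P_2| = 2^k$, this gives $|S(P_2,\sigma{|}_{P_2})| \leq 2^k$.

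Finally, $\mathcal{G}_{\text{TF}}(P_2,\sigma{|}_{P_2})$ counts conjugacy classes contained in $S(P_2,\sigma{|}_{P_2})$; since distinct classes are disjoint and nonempty, their number cannot exceed the cardinality of the set they sit inside, whence $\mathcal{G}_{\text{TF}}(P_2,\sigma{|}_{P_2}) \leq |S(P_2,\sigma{|}_{P_2})| \leq 2^k$. Chaining this with the inequality from the first step yields $\mathcal{G}_{\text{TF}}(\text{H},\sigma) \leq 2^k$, as required.

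I do not expect a genuine obstacle here, as the argument is a reduction plus a counting bound. The only point deserving a moment of care is confirming that $S(P_2,\sigma{|}_{P_2})$ is confined to the one coset $P_2 \times \{x\}$, so that the correct bound is $2^k$ rather than the full group order $2^{k+1}$; this is immediate from the defining condition $g \notin P_2 \times \{e\}$ together with $g \in P_2 \rtimes_{\theta} \mathbb{Z}_2$.
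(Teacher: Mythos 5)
Your proposal is correct and is essentially identical to the paper's own proof: both invoke Theorem \ref{THM: ogarniczenie na ilosc TF-izo grafow - 2-grupa Sylowa} to reduce to a $\sigma$-invariant Sylow $2$-subgroup $P_2$ of order $2^k$, and then bound $\mathcal{G}_{\text{TF}}(P_2,\sigma{|}_{P_2})$ by noting that $S(P_2,\sigma{|}_{P_2})$ is contained in the coset $x \cdot P_2$, which has $2^k$ elements, so the number of disjoint nonempty conjugacy classes inside it is at most $2^k$. The coset-containment point you flag for care is exactly the containment $S(P_2,\sigma{|}_{P_2}) \subseteq x \cdot P_2$ stated in the paper's proof.
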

\begin{proof}
    Theorem \ref{THM: ogarniczenie na ilosc TF-izo grafow - 2-grupa Sylowa} shows that $\mathcal{G}_{\text{TF}}(\text{H},\sigma) \leq \mathcal{G}_{\text{TF}}(P_2,\sigma {|}_{P_2})$, where $P_2$ is some $2$-Sylow subgroup of $\text{H}$. Since $x \cdot P_2$ have $2^k$ elements, and $S(P_2,\sigma {|}_{P_2}) \subseteq x \cdot P_2$, $\mathcal{G}_{\text{TF}}(P_2,\sigma {|}_{P_2})$ is bounded above by $2^k$.
\end{proof}

Even if we only have information about automorphism group of a given graph $\Gamma$, sometimes we can still prove, that there do not exist a graph TF-isomorphic and nonisomorphic to $\Gamma$. We should note that next proposition was laready known, however we present a different proof.

\begin{cor} \label{COR: brak aut rzedu 2 -> graf jest stabilny}
    \cite[Proposition 10]{zbMATH06719978} If $\Gamma$ is a connected reduced nonbipartite graph with an automorhism group of odd order, then there does not exist any nonisomorphic graph $\Gamma_0$ which is TF-isomorphic to $\Gamma$. We shall note, that it was previously known, but a proof was different.
\end{cor}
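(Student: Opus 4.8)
The plan is to chain together the enumeration of TF-isomorphic graphs from Theorem \ref{THM: klasy sprzezenosci strongly switching mapuja sie na TF-iso grafy a ilosc ich elementow to instability indexy} with the parity-transfer result of Corollary \ref{COR: parzystosc Aut taka sama jak parzystosc Aut^pi} and the $2$-group bound of Corollary \ref{COR: ogaraniczenie na ilosc TF-izo - 2^k}. First I would push the oddness hypothesis from $\text{Aut}(\Gamma)$ over to $\text{Aut}^{\pi}(\Gamma)$: since $2 \nmid |\text{Aut}(\Gamma)|$, Corollary \ref{COR: parzystosc Aut taka sama jak parzystosc Aut^pi} gives $2 \nmid |\text{Aut}^{\pi}(\Gamma)|$. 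Writing $|\text{Aut}^{\pi}(\Gamma)| = 2^{k} \cdot m$ with $m$ odd, this forces $k = 0$.

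Next, setting $\text{H} = \text{Aut}^{\pi}(\Gamma)$ and $\sigma = \gamma$, I would invoke Corollary \ref{COR: ogaraniczenie na ilosc TF-izo - 2^k}, which then yields $\mathcal{G}_{\text{TF}}(\text{H},\sigma) \leq 2^{0} = 1$; that is, there is at most one conjugacy class inside $S(\text{H},\sigma)$. The remaining task is to convert this count of conjugacy classes into a count of graphs. By Theorem \ref{THM: klasy sprzezenosci strongly switching mapuja sie na TF-iso grafy a ilosc ich elementow to instability indexy}, the number $|\mathcal{G}_{\text{TF}}(\Gamma)|$ of unlabeled graphs TF-isomorphic to $\Gamma$ equals the number of conjugacy classes consisting of strongly switching elements. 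These elements lie in $S(\Gamma)$, which corresponds bijectively and conjugacy-compatibly to $S(\text{H},\sigma)$ under the isomorphism $\text{Aut}(\text{B}\Gamma) \cong \text{H} \rtimes_{\theta} \mathbb{Z}_2$, and by Proposition \ref{PROP: strongly switching elementy to klasy sprzezen} they form a union of conjugacy classes. Hence the number of strongly switching classes is at most the total number of conjugacy classes in $S(\text{H},\sigma)$, giving $|\mathcal{G}_{\text{TF}}(\Gamma)| \leq \mathcal{G}_{\text{TF}}(\text{H},\sigma) \leq 1$.

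To finish, I would supply the trivial lower bound: $\Gamma$ is always TF-isomorphic to itself, so $\Gamma \in \mathcal{G}_{\text{TF}}(\Gamma)$ and thus $|\mathcal{G}_{\text{TF}}(\Gamma)| \geq 1$. Squeezing the two bounds gives $|\mathcal{G}_{\text{TF}}(\Gamma)| = 1$, so the only graph TF-isomorphic to $\Gamma$ is $\Gamma$ itself, and no nonisomorphic $\Gamma_0$ can be TF-isomorphic to $\Gamma$.

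This is essentially a bookkeeping chain of inequalities rather than a step with a genuine obstacle. The one point that needs care is keeping the two uses of the symbol $\mathcal{G}_{\text{TF}}$ straight: the \emph{set} $\mathcal{G}_{\text{TF}}(\Gamma)$ of unlabeled graphs on the one hand, and the conjugacy-class \emph{count} $\mathcal{G}_{\text{TF}}(\text{H},\sigma)$ on the other, and verifying that the strongly switching classes genuinely form a subfamily of all order-two classes in $S(\text{H},\sigma)$ so that the $2$-group bound applies. Folding in the graph $\Gamma$ itself to obtain the lower bound $\geq 1$ is precisely what turns the upper bound into the stated nonexistence result.
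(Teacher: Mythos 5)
Your proof is correct and follows essentially the same route as the paper, which likewise combines Corollary \ref{COR: parzystosc Aut taka sama jak parzystosc Aut^pi} (to transfer oddness from $\text{Aut}(\Gamma)$ to $\text{Aut}^{\pi}(\Gamma)$) with Corollary \ref{COR: ogaraniczenie na ilosc TF-izo - 2^k} applied with $k=0$. The only difference is that you spell out the bookkeeping the paper leaves implicit --- the passage from the conjugacy-class count $\mathcal{G}_{\text{TF}}(\text{H},\sigma) \leq 1$ through Theorem \ref{THM: klasy sprzezenosci strongly switching mapuja sie na TF-iso grafy a ilosc ich elementow to instability indexy} to the count of unlabeled graphs, together with the lower bound coming from $\Gamma$ itself --- and that bookkeeping is done correctly.
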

\begin{proof}
    By applying Corollary \ref{COR: parzystosc Aut taka sama jak parzystosc Aut^pi} the group $\text{Aut}^{\pi}(\Gamma)$ has odd order. Applying Corollary \ref{COR: ogaraniczenie na ilosc TF-izo - 2^k} ends the proof.
\end{proof}

Following results may help with designing algorithm, which would search for graphs TF-isomorphic to a given one. Some of them are known and we formulate them in a slightly different, but equivalent language.

\begin{lem} \label{LEM: sigma(h) = h^{-1} daje ze e i h^2 sa izo}
    \cite[Proposition 9]{zbMATH06719978} Let $\text{H}$ be a finite group and $\sigma \in \text{Aut}(\text{H})$ be such that $\sigma^2 \equiv Id$. If $h_1,h_2 \in \text{H}$ and $\sigma(h_1) = {h_1}^{-1}$, then $x \cdot h_1 h_2 h_1 \in {x\cdot h_2}_c$. In particular for any $h_0 \in \text{H}$ such that $\sigma(h_0) = {h_0}^{-1}$ and any $k \in \mathbb{Z}$ we obtain $x \cdot {h_0}^{2k} \in {(x)}_c$ and $x \cdot {h_0}^{2k+1} \in {(x\cdot h_0)}_c$. By $x$ we denote the nontrivial element of $\mathbb{Z}_2$ from $\text{H} \rtimes \mathbb{Z}_2$.
\end{lem}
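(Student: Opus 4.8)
The plan is to carry out the entire argument inside the group $\text{H} \rtimes_{\theta} \mathbb{Z}_2$, where $\mathbb{Z}_2 = \{e,x\}$ and $\theta(x) \equiv \sigma$, using nothing more than the defining commutation relation together with $x^2 = e$. The identity I would record first is that for every $h \in \text{H}$ we have $x h x^{-1} = \sigma(h)$; since $x = x^{-1}$ and $\sigma^2 \equiv Id$, this rearranges to $h x = x \sigma(h)$. Every step below is a short manipulation of this single relation.

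For the main claim I would simply conjugate $x \cdot h_2$ by the element $h_1 \in \text{H}$. Using $h_1^{-1} x = x \sigma(h_1^{-1}) = x \sigma(h_1)^{-1}$ together with the hypothesis $\sigma(h_1) = {h_1}^{-1}$, we obtain $h_1^{-1} x = x h_1$, and hence
\[
h_1^{-1} (x \cdot h_2) h_1 = (h_1^{-1} x)\, h_2 h_1 = x\, h_1 h_2 h_1 = x \cdot h_1 h_2 h_1 .
\]
This exhibits $x \cdot h_1 h_2 h_1$ as a conjugate of $x \cdot h_2$, so $x \cdot h_1 h_2 h_1 \in {(x \cdot h_2)}_c$, which is exactly the assertion.

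For the ``in particular'' statement I would first note that if $\sigma(h_0) = {h_0}^{-1}$ then $\sigma({h_0}^k) = \sigma(h_0)^k = {h_0}^{-k} = {({h_0}^k)}^{-1}$ for every $k \in \mathbb{Z}$, so each power ${h_0}^k$ may play the role of $h_1$ in the main claim. Applying the main claim with $h_1 = {h_0}^k$ and $h_2 = e$ gives $x \cdot {h_0}^{2k} = x \cdot {h_0}^k\, e\, {h_0}^k \in {(x)}_c$, and applying it with $h_1 = {h_0}^k$ and $h_2 = h_0$ gives $x \cdot {h_0}^{2k+1} = x \cdot {h_0}^k\, h_0\, {h_0}^k \in {(x \cdot h_0)}_c$. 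No separate induction is required, since the symmetric form $h_1 h_2 h_1$ of the main claim already delivers both parities at once.

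I do not anticipate any genuine obstacle; the only point demanding care is the bookkeeping of the commutation convention in the semidirect product, namely whether to conjugate by $h_1$ or by ${h_1}^{-1}$ and the precise placement of $\sigma$. This is entirely settled once the identity $h x = x \sigma(h)$ is fixed, after which the two displayed computations are routine.
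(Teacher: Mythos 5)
Your proposal is correct and follows essentially the same route as the paper: conjugating $x \cdot h_2$ by $h_1$ to obtain $h_1^{-1}(x \cdot h_2)h_1 = x \cdot (h_1 h_2 h_1)$, then specializing to $h_1 = {h_0}^k$ with $h_2 = e$ and $h_2 = h_0$ respectively. Your explicit verification that $\sigma({h_0}^k) = {({h_0}^k)}^{-1}$, which the paper leaves implicit, is a welcome touch of care but not a different argument.
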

\begin{proof}
    For the first part it is enough to notice that 
    ${h_1}^{-1} (x\cdot h_2) h_1 = x \cdot (h_1 h_2 h_1)$ since $\sigma(h_1) = {h_1}^{-1}$. Put $h_2 = e$ and $h_1 = {h_0}^k$ to get second claim of this lemma, and then set $h_2 = h_0$ and $h_1 = {h_0}^k$ to finish the proof.
\end{proof}

    For any finite group $\text{H}$ and its element $h \in \text{H}$ by $\text{ord}(h)$ we denote the smallest positive inteeger $n$ such that $h^n = e$.

\begin{thm} \label{THM: kazdy Main comoponent jest prepezentowany przez element rzedu 2^k}
     Let $\text{H}$ be a finite group and $\sigma \in \text{Aut}(\text{H})$ be such that $\sigma^2 \equiv Id$. Then every conjugacy class inside $S(\text{H},\sigma)$ contains an element $x \cdot h_0$ such that there exists $k \in \mathbb{Z}_{\geq 0}$ for which $\text{ord}(h_0) = 2^k$.
\end{thm}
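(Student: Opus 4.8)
The plan is to prove this by directly exhibiting, inside an arbitrary conjugacy class $C \subseteq S(\text{H},\sigma)$, a conjugate of the desired form. Recall that the elements of $S(\text{H},\sigma)$ are exactly those of the form $x \cdot h$ with $h \in \text{H}$ and $\sigma(h) = h^{-1}$ (since $(x \cdot h)^2 = \sigma(h)\,h$), and that Lemma \ref{LEM: sigma(h) = h^{-1} daje ze e i h^2 sa izo} tells us that whenever $\sigma(h_1) = h_1^{-1}$ we have $x \cdot h_1 h_2 h_1 \in {(x \cdot h_2)}_{c}$. The strategy is to pick any representative $x \cdot h$ of $C$ and use such a conjugation to peel off the ``odd part'' of $h$, leaving only an element of $2$-power order.

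First I would decompose $h$ inside the cyclic group $\langle h \rangle$. Writing $\text{ord}(h) = 2^k m$ with $m$ odd, the standard $2$-part/odd-part splitting in a cyclic group gives a factorization $h = a b$ with $a, b \in \langle h \rangle$, $\text{ord}(a) = 2^k$, $\text{ord}(b) = m$ and $ab = ba$, where both $a$ and $b$ are powers of $h$. Because $a$ and $b$ are powers of $h$ and $\sigma(h) = h^{-1}$, we automatically obtain $\sigma(a) = a^{-1}$ and $\sigma(b) = b^{-1}$.

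The key observation is that the odd part $b$ is a square inside $\langle b \rangle$: setting $c = b^{(m+1)/2}$ we get $c^2 = b^{m+1} = b$, and $c$ is again a power of $h$, so $\sigma(c) = c^{-1}$ and $c$ commutes with $a$. Consequently
$$
h = a b = a c^2 = c a c,
$$
and applying Lemma \ref{LEM: sigma(h) = h^{-1} daje ze e i h^2 sa izo} with $h_1 = c$ and $h_2 = a$ yields $x \cdot h = x \cdot c a c \in {(x \cdot a)}_{c}$. Since $\text{ord}(a) = 2^k$, the element $x \cdot a$ lies in the same class $C$ and has exactly the required form with $h_0 = a$ (the case where $h$ has odd order being covered by $a = e$, giving $h_0 = e$ of order $2^0$).

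I do not expect a serious obstacle: the whole argument is carried by Lemma \ref{LEM: sigma(h) = h^{-1} daje ze e i h^2 sa izo}, and the only things to verify carefully are that the splitting of $h$ is compatible with $\sigma$ (immediate, since both factors are powers of $h$) and that the odd part admits a square root commuting with the $2$-part (immediate, since everything happens inside the abelian group $\langle h \rangle$). The one conceptual point worth stating explicitly is \emph{why} conjugation absorbs precisely the odd part and nothing more: it rests on the fact that squaring is a bijection on a group of odd order, so the odd part is always a square, whereas a nontrivial element of $2$-power order need not be.
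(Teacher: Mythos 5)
Your proof is correct and follows essentially the same route as the paper: both arguments rest entirely on Lemma \ref{LEM: sigma(h) = h^{-1} daje ze e i h^2 sa izo}, the paper simply invoking its ``in particular'' clause to conclude $x \cdot h^{2l_0+1} \in {(x\cdot h)}_c$ when $\text{ord}(h) = 2^{k_0}(2l_0+1)$ and noting $\text{ord}(h^{2l_0+1}) = 2^{k_0}$. Your explicit $2$-part/odd-part factorization $h = cac$ with $c^2$ equal to the odd part is precisely the maneuver by which that clause of the lemma is established (there with $h_1 = h^{l_0}$), so you merely land on the genuine $2$-part $a$ instead of the paper's representative $h^{2l_0+1} = a^{2l_0+1}$ --- a harmless difference, since both have order $2^{k_0}$.
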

\begin{proof}
    Let $x\cdot h_0$ be an element from $S(\text{H},\sigma)$. Let $\text{ord}(h_0) = 2^{k_0} \cdot (2{l_0}+1)$, where $k_0,l_0 \in \mathbb{Z}_{\geq 0}$. Applying Lemma \ref{LEM: sigma(h) = h^{-1} daje ze e i h^2 sa izo} we get $h_0^{2l_0 + 1} \in {(h_0)}_c$. To finish the proof it is enough to observe that $\text{ord}({h_0}^{2{l_0}+1}) = 2^{k_0}$.
\end{proof}

One can also deduce this result from Theorem \ref{THM: ogarniczenie na ilosc TF-izo grafow - 2-grupa Sylowa} rather than prove it by the method from \cite{zbMATH06719978}, however that would not show how to construct a representant of order $2^k$ from any given representant of a given conjugacy class from $S(\text{H},\sigma)$.

\begin{thm} \label{THM: jesli A m_pi jest grafem, to orbity pi to antykliki}
    Let $\Gamma = (V,E)$ be a connected reduced nonbipartite graph with adjacency matrix $A$. If there exists a graph with adjacency matrix $A \cdot m_{\pi}$, for some $\pi \in \text{Sym}(V)$, then subgraph of $\Gamma$ induced by an orbit of $\pi$ is an empty graph.
\end{thm}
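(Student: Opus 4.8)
The plan is to first extract the two structural facts guaranteed by the hypothesis, and then to run a ``folding'' argument on the edge-equivalence \eqref{ROWNANIE: gamma a krawedzie}.

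First I would invoke Proposition \ref{PROP: warunek na istnienie grafu TF-izo} with $\psi = \pi$. Since $\Gamma$ is reduced and a graph with adjacency matrix $A \cdot m_{\pi}$ exists, that proposition tells us simultaneously that $\pi \in \text{Ant}(\Gamma)$ (hence $\gamma(\pi) = \pi^{-1}$) and that $(v,\pi(v)) \notin E$ for every $v \in V$. Combining $\gamma(\pi) = \pi^{-1}$ with the edge-equivalence \eqref{ROWNANIE: gamma a krawedzie} of Theorem \ref{THM: kluczowe własności gammy} yields the key local rule $(v,w) \in E \Leftrightarrow (\pi(v),\pi^{-1}(w)) \in E$, valid for all $v,w \in V$.

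Next I would fix an orbit of $\pi$ of length $d$ and two of its vertices, written as $a$ and $\pi^{k}(a)$ for some $1 \le k \le d-1$. Applying the local rule repeatedly, with first argument $\pi^{s}(a)$ and second argument $\pi^{t}(a)$, increments $s$ and decrements $t$, so $(a,\pi^{k}(a)) \in E \Leftrightarrow (\pi^{j}(a),\pi^{k-j}(a)) \in E$ for every $j$. Pushing the fold to the middle distinguishes two cases. If $k = 2m$ is even, choosing $j = m$ gives $(\pi^{m}(a),\pi^{m}(a)) \in E$, a loop, which is impossible. If $k = 2m+1$ is odd, choosing $j = m$ gives $(\pi^{m}(a),\pi(\pi^{m}(a))) \in E$, contradicting $(v,\pi(v)) \notin E$. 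Either way $(a,\pi^{k}(a)) \notin E$, so the subgraph induced by the orbit has no edges.

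This argument is essentially a strengthening of Corollary \ref{COR: brak krawedzi w orbitach alphy} from $\text{Im}(\alpha)$ to all of $\text{Ant}(\Gamma)$; the only real subtlety is that $\pi$ need not lie in $\text{Im}(\alpha)$, so I cannot cite that corollary directly and instead run the folding by hand. The main obstacle is organizing the folding cleanly and recognizing that the even case collapses to a forbidden loop while the odd case collapses to a forbidden $\pi$-adjacent pair, which is precisely where the two hypotheses extracted from Proposition \ref{PROP: warunek na istnienie grafu TF-izo} are each used.
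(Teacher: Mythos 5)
Your proof is correct, and it takes a genuinely different route from the paper's. Both proofs begin by extracting the same two facts from the hypothesis --- $\gamma(\pi) = \pi^{-1}$ and $(v,\pi(v)) \notin E$ for all $v$ --- you via Proposition \ref{PROP: warunek na istnienie grafu TF-izo} alone, the paper via the first part of Theorem \ref{TW: kiedy TF-izo sa izo za pomoca Im(alpha)} together with Lemma \ref{LEMAT: bijekcja Antymorfizmy <-> Swithing elementy}. After that they diverge sharply. The paper lifts everything to $\text{Aut}(\text{B}\Gamma) \cong \text{Aut}^{\pi}(\Gamma) \rtimes_{\theta} \mathbb{Z}_2$: it recognizes $x \cdot \pi$ as a strongly switching element, uses the conjugacy identity of Lemma \ref{LEM: sigma(h) = h^{-1} daje ze e i h^2 sa izo} to place each $x \cdot \pi^{k}$ in the conjugacy class of $x \cdot \pi$ (odd $k$) or of $x$ (even $k$), applies conjugation-invariance of strong switching (Proposition \ref{PROP: strongly switching elementy to klasy sprzezen}, via Observation \ref{OBS: every achivable subset of S(H,sigma) is sum of conj classes}), and reads off $(v,\pi^{k}(v)) \notin E$ for all $k$ from Proposition \ref{PROP: warunek na istnienie grafu TF-izo}. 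You instead fold the edge-equivalence (\ref{ROWNANIE: gamma a krawedzie}) by hand: the rule $(v,w) \in E \Leftrightarrow (\pi(v),\pi^{-1}(w)) \in E$ gives $(a,\pi^{k}(a)) \in E \Leftrightarrow (\pi^{j}(a),\pi^{k-j}(a)) \in E$ by induction on $j$, and the midpoint collapses to a forbidden loop when $k$ is even and to a forbidden pair $(u,\pi(u))$ when $k$ is odd --- note that the even/odd dichotomy buried in the paper's Lemma \ref{LEM: sigma(h) = h^{-1} daje ze e i h^2 sa izo} resurfaces in your proof as exactly this case split. Your remark that Corollary \ref{COR: brak krawedzi w orbitach alphy} cannot be cited directly because $\pi$ need not lie in $\text{Im}(\alpha)$ is accurate, and your by-hand folding is the right fix. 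As for what each approach buys: yours is shorter, self-contained, and never uses connectedness or nonbipartiteness, so it in fact proves the statement for an arbitrary reduced graph, whereas the paper's route genuinely needs those hypotheses to have the semidirect-product description of $\text{Aut}(\text{B}\Gamma)$ at all. Conversely, the paper's proof makes explicit the stronger intermediate fact that every $x \cdot \pi^{k}$ is strongly switching (so each $A \cdot m_{\pi^{k}}$ is again an adjacency matrix of a graph), and is deliberately framed, as the paper remarks right afterwards, to showcase how the TF-isomorphism formalism proves properties of $\Gamma$ itself; that byproduct is recoverable from your argument too, since $\gamma(\pi^{k}) = \pi^{-k}$ and your no-edge conclusion together supply both conditions of Proposition \ref{PROP: warunek na istnienie grafu TF-izo} for $\pi^{k}$.
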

\begin{proof}
    First part of Theorem \ref{TW: kiedy TF-izo sa izo za pomoca Im(alpha)} tells us that $\gamma(\pi) = \pi^{-1}$. Lemma \ref{LEMAT: bijekcja Antymorfizmy <-> Swithing elementy} assures us that $x \cdot \pi \in S(\Gamma)$, and therefore $x\cdot \pi$ is a strongly switching element. Then by Lemma \ref{LEM: sigma(h) = h^{-1} daje ze e i h^2 sa izo} and Observation \ref{OBS: every achivable subset of S(H,sigma) is sum of conj classes} we get that $x \cdot \pi^k$ is a strongly switching element for every $k \in \mathbb{Z}$. Now applying  Proposition \ref{PROP: warunek na istnienie grafu TF-izo} for each $\pi^k$ ends the proof.
\end{proof}

One can also prove the above result by using only properties of matrices, as was done in \cite{zbMATH04108921}. Proof presented above is elegant and demonstrates how study of graphs TF-isomorphic to $\Gamma$ can be used in proving properties of $\Gamma$ itself. Two theorems stated above provide rather restrictive conditions for a TF-projection $\pi$ of given graph $\Gamma$, such that there exists a graph with adjacency matrix $A_{\Gamma} \cdot m_{\pi}$. Those conditions can speed up an algorithmic search for graphs TF-isomorphic to a given graph $\Gamma$.

\begin{cor}
    Let $\Gamma$ be a connected, reduced and nonbipartite graph. Then every conjugacy class inside a subset of strongly switching elements of $\Gamma$ is represented by an element of the form $x\cdot \pi_0$, where $\pi_0$ is such that $\text{ord}(\pi_0) = 2^k$ for some positive integer $k$, subgraphs induced by orbits of $\pi_0$ are empty graphs and $\gamma(\pi_0) = {\pi_0}^{-1}$.
\end{cor}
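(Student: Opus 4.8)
The plan is to assemble the statement directly from the three preceding results, working inside the identification $\text{Aut}(\text{B}\Gamma)\cong\text{Aut}^{\pi}(\Gamma)\rtimes_{\theta}\mathbb{Z}_2$ with $\text{H}=\text{Aut}^{\pi}(\Gamma)$, $\mathbb{Z}_2=\{e,x\}$ and $\theta(x)\equiv\gamma$. Under this identification the strongly switching elements of $\Gamma$ lie in $S(\text{H},\gamma)$. So I would fix a conjugacy class $\mathcal{C}$ of $\text{Aut}(\text{B}\Gamma)$ consisting of strongly switching elements and produce a representative with all three desired features.

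First I would apply Theorem \ref{THM: kazdy Main comoponent jest prepezentowany przez element rzedu 2^k} to $\mathcal{C}\subseteq S(\text{H},\gamma)$. This furnishes a representative of the form $x\cdot\pi_0$ with $\text{ord}(\pi_0)=2^k$ for some $k\in\mathbb{Z}_{\geq 0}$, which gives the order condition on $\pi_0$. The remaining task is to check that this very representative also carries the other two properties.

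Next I would verify that $x\cdot\pi_0$ is still strongly switching. Since $\mathcal{C}$ is by hypothesis a class of strongly switching elements, Proposition \ref{PROP: strongly switching elementy to klasy sprzezen} guarantees that every member of $\mathcal{C}$, in particular $x\cdot\pi_0$, is strongly switching. By Lemma \ref{LEMAT: bijekcja Antymorfizmy <-> Swithing elementy} the element $x\cdot\pi_0\in S(\Gamma)$ corresponds to $\pi_0\in\text{Ant}(\Gamma)$, and the definition of $\text{Ant}(\Gamma)$ immediately yields $\gamma(\pi_0)=\pi_0^{-1}$, which is the second required property.

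Finally, the strongly switching property of $x\cdot\pi_0$ says that $\pi_0$ sends every vertex $v$ to a vertex outside $N(v)$, that is $(v,\pi_0(v))\notin E$; combined with $\pi_0\in\text{Ant}(\Gamma)$, Proposition \ref{PROP: warunek na istnienie grafu TF-izo} produces a graph with adjacency matrix $A\cdot m_{\pi_0}$, whereupon Theorem \ref{THM: jesli A m_pi jest grafem, to orbity pi to antykliki} shows the subgraph induced by each orbit of $\pi_0$ is empty, delivering the third property. I do not anticipate a genuine obstacle, since every ingredient is already in place; the only point requiring care is confirming that all three properties can be realised on a \emph{single} representative $\pi_0$, and this works precisely because Proposition \ref{PROP: strongly switching elementy to klasy sprzezen} propagates the strongly switching property across the entire class, so that the order-$2^k$ representative supplied by Theorem \ref{THM: kazdy Main comoponent jest prepezentowany przez element rzedu 2^k} automatically inherits both $\gamma(\pi_0)=\pi_0^{-1}$ and the empty-orbit condition.
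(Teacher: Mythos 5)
Your proposal is correct and takes essentially the same route as the paper, whose entire proof reads ``combine'' the theorem giving an order-$2^k$ representative of each conjugacy class in $S(\text{H},\sigma)$ with the theorem that orbits of $\pi$ induce empty subgraphs whenever $A\cdot m_{\pi}$ is an adjacency matrix. The gluing steps you spell out explicitly --- conjugation-invariance of the strongly switching property, the bijection between $S(\Gamma)$ and $\text{Ant}(\Gamma)$ yielding $\gamma(\pi_0)=\pi_0^{-1}$, and the existence of the graph with adjacency matrix $A\cdot m_{\pi_0}$ --- are precisely what the paper's one-line proof leaves implicit, so your care about realising all three properties on a single representative fills in, rather than departs from, the intended argument.
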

\begin{proof}
    It is enough to combine Theorem \ref{THM: kazdy Main comoponent jest prepezentowany przez element rzedu 2^k} with Theorem \ref{THM: jesli A m_pi jest grafem, to orbity pi to antykliki}.
\end{proof}


We will end this section with construction an infinite family of graphs which have a lot of TF-isomorphic graphs. Those will be Cayley graphs, which have almost the same number of TF-isomorphic graphs, as vertices, and all of those graphs will turn out to be stable. This construction shows, that even if one is only interested in vertex transitive graphs, it is not possible to significantly improve the conjectured upper bound on number of TF-isomorphic graphs (cf. Question \ref{QUE: czy zawsze Aut > TF-iso graphs?}).

\begin{thm}
    If $\mathcal{G}_{\text{TF}}(\text{H},\sigma) = |\text{H}|$, then there exists $k \in \mathbb{Z}_{\geq 0}$ such that $\text{H} \cong \mathbb{Z}_2^{k}$ and $\sigma \equiv Id$.
\end{thm}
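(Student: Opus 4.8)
The plan is to combine the counting bound of Corollary \ref{COR: ogaraniczenie na ilosc TF-izo - 2^k} with a direct analysis of the conjugation action on $S(\text{H},\sigma)$ inside $\text{H}\rtimes_{\theta}\mathbb{Z}_2$. Throughout I would write elements as pairs $(h,\varepsilon)$ with $\varepsilon\in\{e,x\}$ and multiplication $(a,\alpha)(b,\beta)=(a\,\theta_{\alpha}(b),\alpha\beta)$, where $\theta_x\equiv\sigma$. Since the elements lying outside $\text{H}\times\{e\}$ are exactly those of the form $(a,x)$, and $(a,x)^2=(a\,\sigma(a),e)$, we have $(a,x)\in S(\text{H},\sigma)$ precisely when $\sigma(a)=a^{-1}$. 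In particular $S(\text{H},\sigma)\subseteq\{(a,x)\mid a\in\text{H}\}$ has at most $|\text{H}|$ elements, and the element $x=(e,x)$ always lies in $S(\text{H},\sigma)$.

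First I would show that $\text{H}$ is a $2$-group. Writing $|\text{H}|=2^{k}m$ with $m$ odd, Corollary \ref{COR: ogaraniczenie na ilosc TF-izo - 2^k} gives $\mathcal{G}_{\text{TF}}(\text{H},\sigma)\le 2^{k}$; the hypothesis $\mathcal{G}_{\text{TF}}(\text{H},\sigma)=|\text{H}|=2^{k}m$ then forces $m=1$, so $|\text{H}|=2^{k}$.

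Next comes the counting squeeze. The set $S(\text{H},\sigma)$ is closed under conjugation, since both ``has order two'' and ``lies outside $\text{H}$'' are conjugation-invariant in $\text{H}\rtimes_{\theta}\mathbb{Z}_2$; hence it is a disjoint union of $\mathcal{G}_{\text{TF}}(\text{H},\sigma)$ nonempty conjugacy classes, giving $\mathcal{G}_{\text{TF}}(\text{H},\sigma)\le |S(\text{H},\sigma)|\le |\text{H}|$. As the two outer quantities both equal $|\text{H}|$ by hypothesis and Step 1, every inequality is an equality: $|S(\text{H},\sigma)|=|\text{H}|$, so $\sigma(a)=a^{-1}$ for all $a\in\text{H}$, and every conjugacy class meeting $S(\text{H},\sigma)$ is a singleton.

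Finally I would exploit the singleton condition on the class of $x=(e,x)$. Conjugating by an arbitrary $(h',e)\in\text{H}$ gives $(h',e)^{-1}(e,x)(h',e)=(h'^{-1}\sigma(h'),x)$, and since the class of $x$ is a singleton this must equal $(e,x)$ for every $h'$, forcing $h'^{-1}\sigma(h')=e$, i.e. $\sigma\equiv Id$. Combined with $\sigma(a)=a^{-1}$ for all $a$ from the counting step, this yields $a=a^{-1}$, so $\text{H}$ has exponent $2$; a group of exponent $2$ is automatically abelian and hence isomorphic to $\mathbb{Z}_2^{k}$, while $\sigma\equiv Id$ completes the conclusion. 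I expect the only real subtlety to be the bookkeeping of the semidirect-product conjugation and the observation that the single class of $x$ already pins down $\sigma$; the remainder is the clean double inequality $\mathcal{G}_{\text{TF}}(\text{H},\sigma)\le|S(\text{H},\sigma)|\le|\text{H}|$ squeezed to equality.
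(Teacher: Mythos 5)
Your proposal is correct and is essentially the paper's own argument: the squeeze $\mathcal{G}_{\text{TF}}(\text{H},\sigma)\le|S(\text{H},\sigma)|\le|\text{H}|$ forces $S(\text{H},\sigma)=x\cdot\text{H}$ (so $\sigma(a)=a^{-1}$ for all $a\in\text{H}$) together with singleton conjugacy classes, and the singleton class of $x$ --- which the paper phrases as $\text{Im}(\alpha_{\sigma})$ having no nontrivial element --- then pins down $\sigma\equiv Id$, whence $a^{2}=e$ for all $a$ and $\text{H}\cong\mathbb{Z}_2^{k}$. The only deviation is your Step 1 invoking Corollary \ref{COR: ogaraniczenie na ilosc TF-izo - 2^k} to make $\text{H}$ a $2$-group, which is harmless but redundant: the squeeze uses nothing beyond the hypothesis, and the exponent-$2$ conclusion already gives the $2$-group structure.
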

\begin{proof}
    At first let us notice, that in general $|S(\text{H},\sigma)| \leq |\text{H}|$, so $S(\text{H},\sigma) = x\cdot \text{H}$, and therefore $\sigma$ is equal to the inverse function. On the other hand, if $\sigma$ is not the identity, then there exists a nontrivial element in $\text{Im}(\alpha_{\sigma})$, a contradiction. Therefore, $\sigma \equiv Id$, for any $h \in \text{H}$ we get $h = h^{-1}$ and equivalently $h^2 = e$. The fact that last equation holds for any element of $\text{H}$ implies that in fact $\text{H} \cong \mathbb{Z}_2^{k}$.
\end{proof}

To construct family of graphs with large number of TF-isomorphic stable graphs we have to introduce two graphs which will become important in our construction.

\begin{defi}
    Let $k\geq 13$ be a positive integer. We define $M(k)$ to be a graph with vertex set $\{1, \ldots, k\}$ and edge set $E_{M(k)}$, where
    $$
    E_1 = \{(1,2),(1,3),(2,4),(3,5),(4,7),(5,6),(6,8),(7,8),(7,9),(8,10),(9,k)\},
    $$
    $$
    E_2 = \{ (i,i+1) \mid 10 \leq i \leq k-1 \}, \text{ and } E_{M(k)} = E_1 \cup E_2.
    $$
    \center
    \includegraphics[scale = 1]{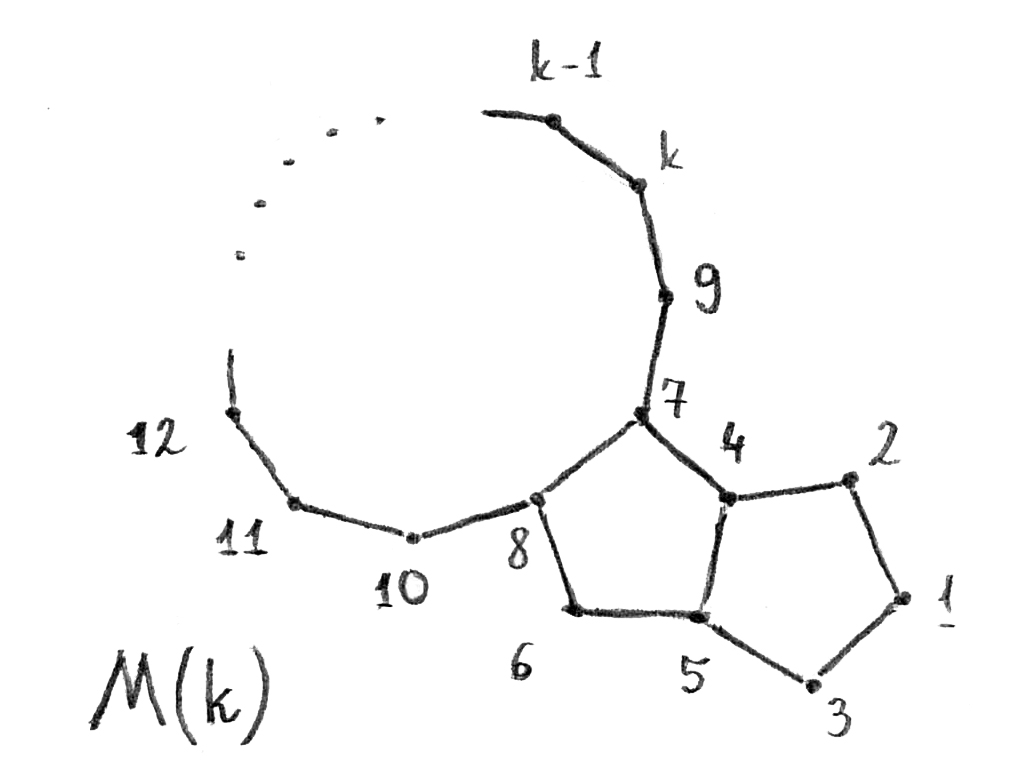}
\end{defi}

\begin{defi}
    For any graph $\Gamma=(V,E)$ a line graph $L(\Gamma)$ of a graph $\Gamma$ is a graph with vertex set $V_{L(\Gamma)} = E$ and edge set
    $$
    E_{L(\Gamma)} = \{((v_1,v_2),(v_3,v_4)) \in E \times E \mid v_1 = v_3, v_1 = v_4, v_2 = v_3 \text{ or } v_2 = v_4 \}.
    $$
\end{defi}

\begin{defi}
    Let $k\geq 13$ be a positive integer. We define $M_0(k)$ to be a graph with
    $$
    \text{vertex set } V_{M_0(k)} = \{(i,i) \mid 1 \leq i \leq k\} \cup E_{M(k)} \text{ and edge set constructed as below}
    $$
    $$
    E_1 = \{((i,i),(j,j)) \in  V_{M_0(k)} \times V_{M_0(k)} \mid (i,j) \in E_{{M(k)}^2}\}, \text{ } E_2 = \{ ((i,i),(i,j)) \in V_{M_0(k)} \times V_{M_0(k)} \},
    $$
    $$
    E_3 = \{ ((i,i),(m,n)) \in V_{M_0(k)} \times V_{M_0(k)} \mid (i,m) \in E_{M(k)} \text{ or } (i,n) \in E_{M(k)} \},
    $$
    $$
    E_4 = \{ ((i,j),(m,n)) \in  V_{M_0(k)} \times V_{M_0(k)} \mid  ((i,j),(m,n)) \in E_{L(M(k))} \}, \text{ and } E_{M_0(k)} = E_1 \cup E_2 \cup E_3 \cup E_4.
    $$
    \center 
    \includegraphics[scale = 0.7]{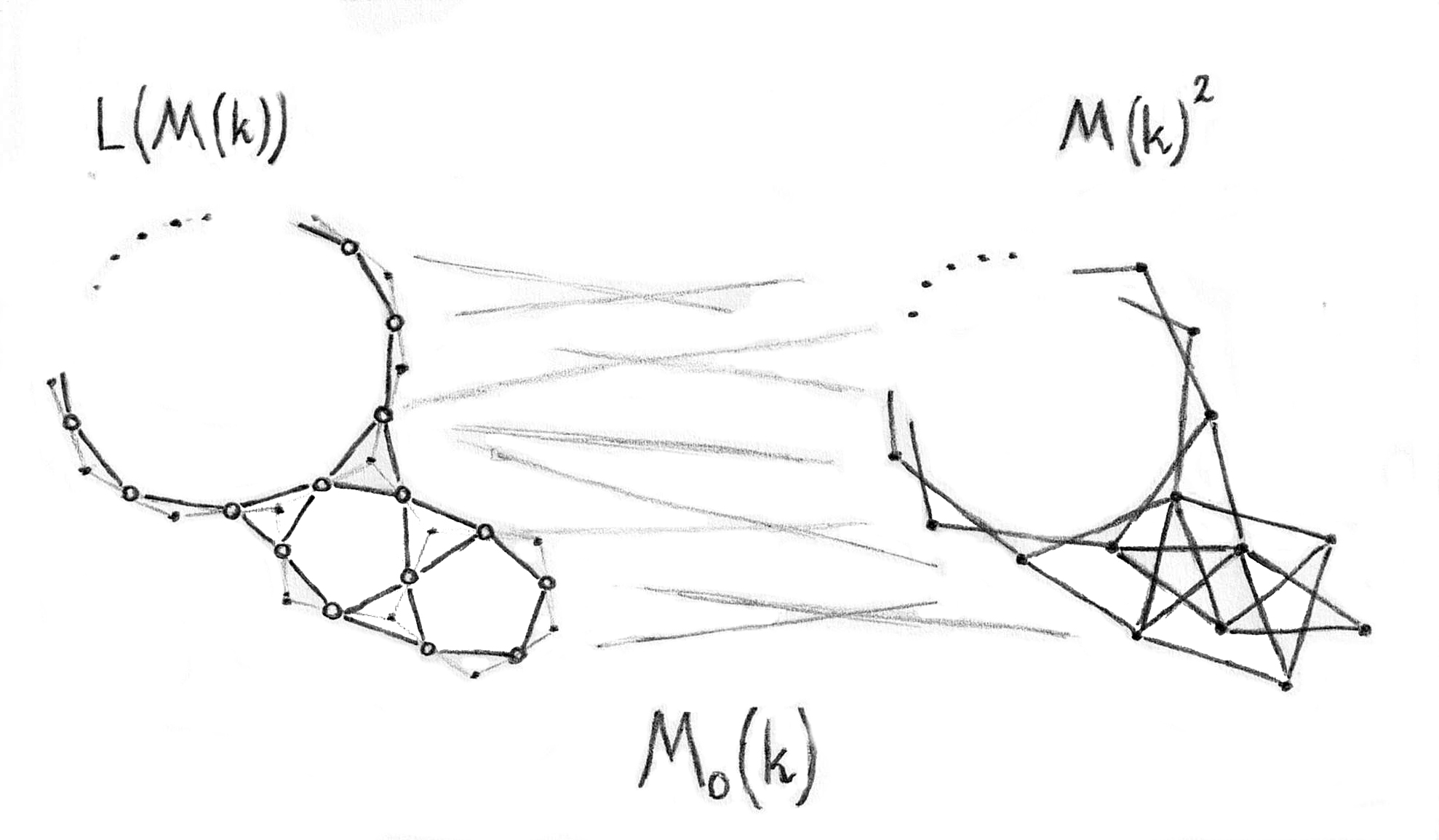}

\end{defi}

\begin{lem} \label{LEM: graph M_0(k) is asymmetric}
    For any positive integer $k \geq 13$, graph $M_0(k)$ is asymmetric. 
\end{lem}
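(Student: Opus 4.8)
The plan is to reduce automorphisms of $M_0(k)$ to automorphisms of the much simpler skeleton $M(k)$, and then to show that the only element of $\text{Aut}(M(k))$ compatible with the $M_0$-construction is the identity. First I would analyse $M(k)$ itself. A degree count shows that $7$ and $8$ are the only vertices of degree three, and that $M(k)$ is a generalized theta graph: its two degree-three vertices are joined by three internally disjoint paths, namely the edge $7$--$8$, the path $7,4,2,1,3,5,6,8$ of length $7$, and the path $7,9,k,k-1,\dots,10,8$ of length $k-7$. For $k\ge 13$ these three lengths are pairwise distinct except in the single case $k=14$ (where two of them equal $7$), so $\text{Aut}(M(k))$ is generated by the reflection $\rho$ that interchanges $7$ and $8$ and reverses each of the three paths, together, when $k=14$, with the transposition of the two paths of length $7$. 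In particular $\text{Aut}(M(k))$ is a small, explicitly described $2$-group whose non-identity elements each reverse at least one edge of $M(k)$.

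Next I would show that any $\phi\in\text{Aut}(M_0(k))$ respects the partition $V_{M_0(k)}=D\sqcup F$, where $D=\{(i,i)\}$ is the set of diagonal vertices and $F=E_{M(k)}$ is the set of edge-vertices. This is the technical heart of the argument: one distinguishes the two classes by local invariants read off from the four edge families, namely that the subgraph induced on $D$ through $E_1$ is a copy of $M(k)^2$, the subgraph induced on $F$ through $E_4$ is the line graph $L(M(k))$, and the incidence edges $E_2\cup E_3$ attach the two classes to one another in a prescribed pattern; comparing degrees and neighbourhood types then forces $\phi(D)=D$ and $\phi(F)=F$. Once this is established, $\phi$ induces a permutation $\psi$ of $\{1,\dots,k\}$ through its action on $D$, and the compatibility of $\psi$ with the adjacencies encoded by $E_1$, $E_3$ and $E_4$ shows that $\psi\in\text{Aut}(M(k))$, while the action of $\phi$ on $F$ is exactly the edge-map induced by $\psi$.

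By the first step it therefore suffices to prove that no non-identity element of $\text{Aut}(M(k))$ lifts to $M_0(k)$, and this is the point at which the construction is engineered. The incidence edges $E_2$ attach the diagonal vertex $(i,i)$ to an incident edge-vertex only when $i$ occupies the distinguished (first) coordinate dictated by the vertex labelling, so $E_2$ records a genuine orientation of $M(k)$. The reflection $\rho$ reverses the edge $7$--$8$ (and the $k=14$ swap reverses other edges), hence carries an $E_2$-edge to a pair that is not an $E_2$-edge; thus $\rho$ fails to preserve adjacency in $M_0(k)$, and the same holds for every other non-trivial element of $\text{Aut}(M(k))$. Consequently $\psi=\mathrm{Id}$; since the action of $\phi$ on $F$ is the edge-map induced by $\psi$, it too is trivial, so $\phi=\mathrm{Id}$ and $M_0(k)$ is asymmetric. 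The main obstacle I anticipate is the bookkeeping in the middle step — rigorously separating $D$ from $F$ and verifying that the descent $\phi\mapsto\psi$ really lands in $\text{Aut}(M(k))$ — because the four overlapping edge families make the local degree computations delicate; the symmetry-breaking in the final step is conceptually the crux but should reduce to a short, direct verification once the orientation carried by $E_2$ is made explicit.
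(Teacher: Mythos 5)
Your crux step fails, and the failure is structural. You claim that the incidence family $E_2$ ``records a genuine orientation of $M(k)$'' because $(i,i)$ is attached to $(i,j)$ only through the first coordinate. But the paper defines a graph's edge set as a set of \emph{unordered} pairs, so $(i,j)=(j,i)$ and $E_2$ attaches each edge-vertex to \emph{both} of its endpoints; indeed $E_2 \subseteq E_3$, since for the diagonal vertex $(i,i)$ and the edge-vertex $(m,n)=(i,j)$ the defining condition of $E_3$ is satisfied with $(i,n)=(i,j)\in E_{M(k)}$. All four edge families of $M_0(k)$ are given by conditions invariant under any $\psi\in\text{Aut}(M(k))$ (adjacency in $M(k)^2$, incidence, having an endpoint adjacent to $i$, sharing an endpoint), so the construction is functorial: every $\psi\in\text{Aut}(M(k))$ lifts to the automorphism $(i,i)\mapsto(\psi(i),\psi(i))$, $(i,j)\mapsto(\psi(i),\psi(j))$ of $M_0(k)$. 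Hence your reflection $\rho$ of the theta graph \emph{does} lift, and with the printed edge list of $M(k)$ --- which you read correctly, and for which your theta-graph analysis of $\text{Aut}(M(k))$ is right --- the lemma would actually be \emph{false}. No orientation argument can repair this, because there is no orientation present. (A smaller independent defect: even under your oriented reading, your assertion that every nontrivial automorphism reverses an edge fails at $k=14$, where the swap of the two length-$7$ paths fixes $7$ and $8$ and reverses no edge.)

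What reconciles this is that the printed definition of $M(k)$ has a typo: the edge $(4,5)$ is missing. The paper's own proof treats $(4,5)$ as a vertex of $M_0(k)$ of degree $10$, asserts degrees $13$ for $(4,4),(7,7)$ and $11$ for $(5,5),(8,8)$ --- counts that hold exactly when $(4,5)\in E_{M(k)}$ (without it one computes, e.g., $\deg(5,5)=6$ and $\deg(7,7)=11$) --- and the proof of Theorem \ref{THM: konstrukcja grafu z wieloma TF-izomorficznymi - GRR dla Z_2 fo k} counts $k+2$ edges for $M(k)$, whereas the printed list has only $k+1$. With $(4,5)$ restored, $M(k)$ has four vertices of degree $3$ (namely $4,5,7,8$) and is itself asymmetric: no automorphism can swap $4$ with $5$, since $N(4)\setminus\{5\}=\{2,7\}$ contains a degree-$3$ vertex while $N(5)\setminus\{4\}=\{3,6\}$ does not, and similarly for $7,8$; so there is no symmetry to break at the $M_0$ level, and by functoriality your reduction strategy would then close provided you complete the delicate middle step (separating diagonal from edge vertices and showing the descent lands in $\text{Aut}(M(k))$), which you only sketch. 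The paper avoids that reduction entirely and argues directly on $M_0(k)$: it identifies $(5,5),(8,8)$ and $(4,4),(7,7)$ by their degrees, shows they are fixed, and propagates fixedness vertex by vertex, finishing with the observation that a permutation cannot move only the single remaining vertex $(9,k)$.
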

\begin{proof}
Let $\pi_0$ be any automorphism of $M_0(k)$. We will show that $\pi_0$ is the identity. One can check, that the only vertices of degree $11$ in $M_0(k)$ are $(5,5)$ and $(8,8)$ and the only vertices of degree $13$ are $(4,4)$ and $(7,7)$. 
\newline
Let us first show that $(5,5)$ and $(8,8)$ are fixed points. If not, then $\pi_0((5,5)) = (8,8)$ and $\pi_0((8,8)) = (5,5)$. Then $\pi_0((4,4)) = (7,7)$ since an image of $(4,4)$ by $\pi_0$ have to be of degree $13$ and have to be connected to $\pi_0((5,5))$. Similarly $\pi_0((7,7)) = (4,4)$. Now notice, that $(1,1)$ is a vertex of degree $6$ connected to both $(4,4)$ and $(5,5)$. Therefore $\pi_0(1,1)$ is a vertex of degree $6$ connected to both $(7,7)$ and $(8,8)$, however such vertex does not exist, which gives a contradiction. This shows that $(5,5)$ and $(8,8)$ are fixed points, moreover $(4,4)$ and $(7,7)$ also are as both are unique vertices of degree $13$ connected to $(8,8)$ or $(5,5)$ respectively. 
\newline
Now we shall note that the only vertices of degree $10$ are $(6,6)$, $(4,5)$, $(4,7)$ and $(7,8)$. Even more observe that $(6,6)$ is the only vertex of degree $10$ which is not connected to any vertex of degree $11$, and therefore it is a fixed point of $\pi_0$. Now note, that from the set $\{(4,5),(4,7),(7,8)\}$ of other vertices of degree $10$, $(7,8)$ is the only vertex which is not connected to $(5,5)$, and $(4,5)$ is the only vertex which is not connected to $(8,8)$. Therefore $(4,5)$, $(7,8)$, and consequently $(4,7)$ are also fixed points of $\pi_0$.
\newline
By continuing this argument of checking if some vertex is unique with given degree and connections to vertices which are already known to be fixed points one can prove (in order as they are listed) that
$(2,2), (1,1), (3,3), (9,9), (10,10), (1,2), (1,3), (2,4), (3,5), (5,6), (6,8), (7,8),$ 
$(7,9), (8,10)$ and $(10,11)$ are all fixed points of $\pi_0$.
\newline
Now we prove by induction that $(i,i)$ and $(i,i+1)$ for $11\leq i \leq k-1$ are fixed points of $\pi_0$. Assume that this holds for $i_0 - 1$. Then $(i_0,i_0 + 1)$ is the only vertex which is not yet proven to be a fixed point which is connected to both $(i_0-1,i_0-1)$ and $(i_0,i_0)$. This proves that $(i_0,i_0 + 1)$ indeed is a fixed point. Now observe that $(i_0+1,i_0+1)$ is the only vertex which is not yet proven to be a fixed point, which is connected to both $(i_0-1,i_0-1)$ and $(i_0,i_0+1)$. This shows that $(i_0+1,i_0+1)$ also is a fixed point, which finishes the inductive argument.
\newline
The only vertex we had not yet proven to be a fixed point is $(9,k)$, however a permutation $\pi_0$ cannot fix all but one vertex, so indeed $\pi_0$ is an identity, hence the proof is finished.
\end{proof}

Now we are ready for the construction. We should note that this construction is based on one from \cite{zbMATH03291926}.

\begin{defi}
    A Cayley graph of a finite group $\text{H}$ with the set $S \subseteq \text{H} \backslash \{e\}$ such that $S^{-1} = S$ is a graph $\text{Cay}(\text{H},S) = (V,E)$ in which 
    $$
    V = \text{H} \text{ and }
    E = \{(h_1,h_2) \in \text{H} \times \text{H} \mid h_1^{-1} h_2 \in S \}.
    $$
    A Graphical Regular Representation (GRR) of a finite group $\text{H}$ is a Cayley graph $\Gamma = \text{Cay}(\text{H},S)$ such that $\text{Aut}(\Gamma) \cong \text{H}$.
\end{defi}

We say that a graph $\Gamma$ have valency $k_0$ if every vertex of this graph have degree $k_0$.

\begin{thm} \label{THM: konstrukcja grafu z wieloma TF-izomorficznymi - GRR dla Z_2 fo k}
    For every positive integer $k \geq 13$ there exists a connected, reduced, nonbipartite graph $\Gamma$ which is a GRR for a group $\mathbb{Z}_2^{k}$, have valency $2k + 2$ and there exists $2^k - 2k - 2$ pairwise nonisomorphic graphs, which are TF-isomorphic to $\Gamma$. 
\end{thm}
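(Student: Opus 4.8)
The plan is to realize $\Gamma$ as a Cayley graph $\Gamma = \text{Cay}(\mathbb{Z}_2^{k},S)$ on the elementary abelian $2$-group, with a connection set $S \subseteq \mathbb{Z}_2^{k}\setminus\{0\}$ of cardinality exactly $2k+2$ (so that the valency is $2k+2$). Both the choice of group and the size of $S$ are essentially forced: the earlier theorem characterising the equality $\mathcal{G}_{\text{TF}}(\text{H},\sigma)=|\text{H}|$ shows that one can only hope to approach $|\text{H}|$ TF-isomorphic graphs when $\text{H}\cong\mathbb{Z}_2^{k}$ and $\sigma\equiv Id$, and, as the count below will show, the deficit $2k+2$ between $2^{k}$ and the number of TF-isomorphic graphs is precisely $|S|$. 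The set $S$ itself is built, following the method of \cite{zbMATH03291926}, so that the local structure a stabilizer automorphism must respect reproduces the asymmetric graph $M_0(k)$ of Lemma \ref{LEM: graph M_0(k) is asymmetric}, whose $2k+1$ vertices match $|S|$ up to the one distinguished generator that is adjoined to force an odd cycle. I first check the routine properties: $S$ generates $\mathbb{Z}_2^{k}$ (connectedness), contains a triple $s_1,s_2,s_1+s_2$ (a triangle, hence $\Gamma$ is nonbipartite), and satisfies $S+t=S\Rightarrow t=0$ (so distinct vertices have distinct neighbourhoods and $\Gamma$ is reduced).

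Next I would prove that $\Gamma$ is a GRR, i.e. $\text{Aut}(\Gamma)=\mathbb{Z}_2^{k}$. The translations already give a regular copy of $\mathbb{Z}_2^{k}\leq\text{Aut}(\Gamma)$, so it suffices to trivialise the stabilizer of the vertex $0$. Such an automorphism permutes $S=N(0)$ and preserves the induced connection structure on $S$ (two neighbours $s,s'$ being adjacent exactly when $s+s'\in S$); the construction arranges that this structure, together with the distinguished generator, is a rigid configuration built from $M_0(k)$ whose full automorphism group is trivial by Lemma \ref{LEM: graph M_0(k) is asymmetric}. Hence the stabilizer is trivial and $\text{Aut}(\Gamma)=\mathbb{Z}_2^{k}$.

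The crucial and most delicate step is establishing stability, i.e. $\text{Aut}^{\pi}(\Gamma)=\text{Aut}(\Gamma)$, since two-fold projections are more permissive than ordinary automorphisms and rigidity alone does not rule them out. I would invoke Theorem \ref{THM: Aut^pi(Gamma) < Aut(Gamma^2)}, which gives $\text{Aut}^{\pi}(\Gamma)\leq\text{Aut}(\Gamma^2)$, and observe that $\Gamma^2=\text{Cay}(\mathbb{Z}_2^{k},(S+S)\setminus\{0\})$ is again a Cayley graph of $\mathbb{Z}_2^{k}$. The design of $S$ must be strong enough that $M_0(k)$ is also recovered from this distance-two structure, forcing $\text{Aut}(\Gamma^2)=\mathbb{Z}_2^{k}$; then the chain $\mathbb{Z}_2^{k}=\text{Aut}(\Gamma)\leq\text{Aut}^{\pi}(\Gamma)\leq\text{Aut}(\Gamma^2)=\mathbb{Z}_2^{k}$ collapses to equality. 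This is where I expect the main obstacle: controlling $\Gamma^2$ simultaneously with $\Gamma$ and excluding the extra neighbourhood-permutations invisible to $\text{Aut}(\Gamma)$. Once $\text{Aut}^{\pi}(\Gamma)=\text{Aut}(\Gamma)$ is known, Theorem \ref{THM: kluczowe własności gammy} gives $\text{Fix}(\gamma)=\text{Aut}(\Gamma)=\text{Aut}^{\pi}(\Gamma)$, so $\gamma\equiv Id$ and $\Gamma$ is stable.

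Finally I count the TF-isomorphic graphs. Because $\gamma\equiv Id$ and every element of $\mathbb{Z}_2^{k}$ is an involution, $\text{Ant}(\Gamma)=\text{Aut}^{\pi}(\Gamma)=\mathbb{Z}_2^{k}$. For a translation by $h$ the vertices $v$ and $v+h$ are adjacent for all $v$ when $h\in S$ and for no $v$ when $h\notin S$, so by Proposition \ref{PROP: warunek na istnienie grafu TF-izo} the antimorphisms yielding a loopless graph are exactly the translations by $h\in\mathbb{Z}_2^{k}\setminus S$, whence $|\text{Ant}_0(\Gamma)|=2^{k}-(2k+2)$. Since $\mathbb{Z}_2^{k}\rtimes\mathbb{Z}_2$ is abelian, each strongly switching element forms its own singleton conjugacy class, so Theorem \ref{THM: klasy sprzezenosci strongly switching mapuja sie na TF-iso grafy a ilosc ich elementow to instability indexy} puts them in bijection with the pairwise nonisomorphic graphs TF-isomorphic to $\Gamma$, giving exactly $2^{k}-2k-2$ of them; each class being a singleton, every such graph has instability index $1$ and is therefore stable, as claimed. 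Equivalently, the count drops out of Corollary \ref{COR: rownania dotyczace TF-iso grafow} together with $\text{Inst}(\Gamma')=1$ for every $\Gamma'$.
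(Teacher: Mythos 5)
Your construction, the routine verifications, and the final count via $\text{Ant}_0(\Gamma)$ and singleton conjugacy classes all match the paper, but the two load-bearing middle steps are not actually established. First, the rigidity gadget for the GRR step is misidentified. The subgraph of $\Gamma$ induced on $N(0)=S$ has $x_i\sim x_j$ iff $(i,j)\in E_{M(k)}$, has $x_i\sim x_jx_l$ iff $i\in\{j,l\}$, and (since $M(k)$ is triangle-free) has no edges among the weight-two elements at all; this graph is not $M_0(k)$ --- it is missing the edge classes $E_1$, $E_3$ and $E_4$ from the definition of $M_0(k)$ --- so Lemma \ref{LEM: graph M_0(k) is asymmetric} cannot be invoked for it as you do, and you would owe a separate asymmetry proof for this sparser configuration. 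The graph $M_0(k)$ only arises from a different gadget: join $g_1\neq g_2\in N(g_0)$ when there are \emph{more than two paths of length two} between them in the ambient graph.

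Second, and more seriously, the stability step is a genuine gap, as you yourself flag. You reduce it to $\text{Aut}(\Gamma^2)=\mathbb{Z}_2^k$, where $\Gamma^2=\text{Cay}(\mathbb{Z}_2^k,(S+S)\setminus\{0\})$ and $(S+S)\setminus\{0\}$ contains \emph{every} vector of weight one and two; this is a brand-new GRR-type assertion about a much denser and less structured graph, nothing in the design of $S$ was arranged to control it, no argument is offered, and it is at best no easier than the original problem (note also that the inclusion $\text{Aut}^{\pi}(\Gamma)\leq\text{Aut}(\Gamma^2)$ can be strict, and that Theorem \ref{THM: brak cykli 3,6 i podwierzcholkow implikuje ze Aut^pi(Gamma) = Aut(Gamma^2)} is unavailable because $\Gamma$ contains triangles such as $e,x_i,x_ix_j$). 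The paper sidesteps both problems with a single computation one level up: it identifies $\text{B}\Gamma \cong \text{Cay}(\mathbb{Z}_2^{k+1},x_{k+1}\cdot S)$ and proves that \emph{this} graph is a GRR for $\mathbb{Z}_2^{k+1}$, using precisely the more-than-two-paths gadget $G(g_0)$ on each neighbourhood; after cancelling $x_{k+1}$ the gadget is isomorphic to $M_0(k)$, so Lemma \ref{LEM: graph M_0(k) is asymmetric} forces a point stabilizer to fix each neighbourhood pointwise, and fixed points propagate along the connected graph $\text{B}\Gamma$. From $\text{Aut}(\text{B}\Gamma)\cong\mathbb{Z}_2^{k+1}$ one then reads off simultaneously that $\Gamma$ is a GRR and that $\text{Aut}^{\pi}(\Gamma)\cong\text{Aut}_0(\text{B}\Gamma)\cong\mathbb{Z}_2^k$, i.e.\ stability --- exactly the two assertions your outline leaves unproved. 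Repairing your proposal essentially means replacing both middle steps by this computation in $\text{B}\Gamma$; your concluding count is then correct as written.
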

\begin{proof}
    Let $\mathbb{Z}_2^k$ be generated by $x_1, \ldots, x_k$. Let $S = \{x_1,\ldots,x_k\} \cup \{x_i \cdot x_j \mid (i,j) \in E_{M(k)}\}$. We will prove that $\Gamma = \text{Cay}(\mathbb{Z}_2^k,S)$ is a graph satisfying  statement of the theorem. It is easy to see that $\Gamma$ is connected and nonbipartite, even more we have $\text{B}\Gamma) \equiv \text{Cay}(\mathbb{Z}_2^{k+1},  x_{k+1} \cdot S)$ where $x_{k+1}$ is the additional generator of $\mathbb{Z}_2^{k+1}$. Since $\Gamma$ is connected and nonbipartite, $\text{B}\Gamma$ is connected. We will prove that $\mathbb{Z}_2^{k+1}$ is full the automorphisms group for $\text{B}\Gamma$. Let $\pi_0$ be an element of $\text{Aut}(\text{B}\Gamma)$ which maps vertex $e$ to vertex $g$. Then $\pi_1 = (g \cdot )^{-1} \circ \pi_0$ is an automorphism of $\text{B}\Gamma$ which fixes $e$. We will show that if a vertex is a fixed point of an automorphism $\pi_1$, then every vertex in its neighbourhood is a fixed point. This will complete the proof, as $\text{B}\Gamma$ is connected.
    \newline
    Let $g_0$ be a vertex of $\text{B}\Gamma$ which is fixed by $\pi_1$. Then $\pi_1$ gives a permutation on vertices from $N(g_0)$. Let us now consider an a graph $G(g_0)$ with vertex set $N(g_0)$ and edges between vertices $g_1 \neq g_2 \in N(g_0)$ if and only if there exists more than two paths of length two between those vertices in the graph $\text{B}\Gamma$. Since $\pi_1$ is an automorphism, number of paths of length two between vertices $g_1$ and $g_2$ is the same as number of paths of length two between $\pi_1(g_1)$ and $\pi_1(g_2)$. Therefore $\pi_1 {|}_{N(g_0)}$ is an automorphism of $G(g_0)$. To end the proof we will determine set of edges of $G(g_0)$ and then prove that it is asymmetric.
    \newline
    A path of length two is created between vertices $g_1$ and $g_2$ if there exists such $s_1,s_2 \in x_{k+1} S$ such that $s_1 s_2 g_1 = g_2$. Therefore without loss of generality look at $G(e)$, which is isomorphic to every other graph $G(g_0)$. Since every element of $x_{k+1} S$, written down in terms of generators $x_1,\ldots, x_{k+1}$ admits $x_{k+1}$ and ${x_{k+1}}^2 = e$, we can simply ignore it, and label all vertices of $G(e)$ by vertices of $S$. Further we connect vertices labeled by $u$ and $v$ with edge if and only if there exists more than two ordered pairs $(s_1,s_2) \in S \times S$ such that $s_1 s_2 u = v$, which is equivalent to $s_1 s_2 = u v$. We refer to this relabeled graph as $G'$. We call the set $\{x_1,\ldots, x_{k+1}\}$ a standard basis of $\mathbb{Z}_2^{k+1}$.
    \newline
    Let $x_i$ and $x_j$ be two vertices of $G'$. Since both of those have one nonzero power in standard basis, both $s_1$ and $s_2$ consist of the same number of nontrivial powers in standard basis. If $s_1 = x_p$ and $s_2 = x_q$ for $p,q \leq k$, then one of those have to cancel $x_i$ and other have to give a nonzero power in $x_j$, Therefore only possible pairs are $(x_i,x_j)$ and $(x_j,x_i)$. It gives two trivial pairs. If $s_1 = x_p x_q$ and $s_2 = x_r x_s$, then $x_i x_j = x_p x_q x_r x_s$, so one of numbers $p,q,r,s$ have to be equal to $i$, and another one to $j$. If either of $s_1, s_2$ is equal to $x_i x_j$, then other one have to be equal to identity, which is a contradiction. The only other possibility is that up to permutation of $s_1,s_2$, $s_1 = x_i x_p$ and $s_2 = x_j x_p$, which is equivalent to the fact that there is a path of length two in a graph $M(k)$ between vertices $i$ and $j$.
    \newline
    Let now $x_i x_j$ and $x_i x_m$ be two vertices of $G'$. Then we have pairs $(x_i x_j,x_i x_m)$, $(x_i x_m, x_i x_j)$, $(x_j,x_m)$, hence $x_i x_j$ and $x_i x_m$ are connected in $G'$.
    \newline
    Let now $x_i x_j$ and $x_m x_n$ be two vertices of $G'$. We get $s_1 s_2 = x_i x_j x_m x_m$, and since both $s_1$ and $s_2$ consists of at most two elements of the standard basis, equality holds, and up to permutation of $i$ and $j$ and up to permutation of $m$ and $n$ either $s_1 = x_i x_j$ and $s_2 = x_m x_n$ or $s_1 = x_i x_m$ and $s_2 = x_j x_n$. First case produces two pairs $(s_1,s_2)$, and if the second case holds, then $i,j,n,m$ is a four cycle in the graph $M(k)$, however, this graph does not have four cycles, and therefore there are no edges between elements of the form $x_i x_j$ and $x_m x_n$.
    \newline
    Let now $x_i x_j$ and $x_i$ be two vertices of $G'$. Since every vertex in $M(k)$ have degree at least two, there exists $m$ such that both $x_m$ and $x_m x_j$ are elements of $S$. Then, pairs $(s_1,s_2) = (x_i x_j,x_i), (x_i,x_i x_j), (x_m, x_m x_j)$ satisfy the condition $s_1 s_2 = (x_i x_j) x_i = x_j$, and therefore $x_i x_j$ and $x_i$ are connected in $G'$.
    \newline
    Let finally $x_i x_j$ and $x_m$ be two vertices of $G'$. Then $s_1 s_2 = x_i x_j x_m$, and since $s_1$ and $s_2$ both consists of at most two elements from the standard basis, up to transpositions of pairs $s_1,s_2$ and $i,j$ we either have $s_1 = x_i x_j, s_2 = x_m$ or $s_1 = x_i x_m, s_2 = x_j$. The first trivial case always produces two pairs $s_1,s_2$. Therefore there is an edge between $x_i x_j$ and $x_m$ if and only if there exists such $m$ that $s_1 = x_i x_m, s_2 = x_j$ holds, which is equivalent to $x_m$ being connected to either $x_i$ or $x_j$.
    \newline
    The above reasoning ensures, that $G'$ is isomorphic to a graph $M_0(k)$ by map
    $$
    x_i \mapsto (i,i), \quad x_i x_j \mapsto (i,j).
    $$
    By Lemma \ref{LEM: graph M_0(k) is asymmetric} $G'$ is asymmetric, therefore $\pi_1$ fixes every element of the set $N(g_0)$, which finally proves that $\text{B}\Gamma$ is a GRR. Therefore $\Gamma$ also is a GRR and it is stable with $\text{Aut}^{\pi}(\Gamma) \cong \mathbb{Z}_2^k$.
    \newline
Since $\text{Aut}(\text{B}\Gamma) \cong \mathbb{Z}_2^{k+1}$, we have $S(\Gamma) = x_{k+1} \cdot \text{Aut}(\Gamma)$. $\text{Aut}(\text{B}\Gamma)$ is abelian, therefore each conjugacy class contains exactly one element. To compute number of pairwise nonisomorphic graphs TF-isomorphic to $\Gamma$ we have to obtain the number of strongly switching elements. Applying Lemma \ref{LEMAT: bijekcja Antymorfizmy <-> Swithing elementy} and Proposition \ref{PROP: warunek na istnienie grafu TF-izo} ensures us, that number of such elements is equal to the number of elements from $\text{Aut}(\Gamma) \cong \mathbb{Z}_2^k$ which maps each vertex to one outside of its neighbourhood. Since $\Gamma$ is a Cayley graph, it is equivalent to the fact that $e$ is mapped to an element outside of $S$. A graph $M(k)$ have $k$ vertices and $k+2$ edges, so $|S| = 2k+2$, and therefore $|\mathbb{Z}_2^k \text{ }\backslash \text{ } S| = 2^k - 2k - 2$. This shows that there exists exactly $2^k - 2k - 2$ pairwise nonisomorphic graphs, all of which are TF-isomorphic to $\Gamma$. By Proposition \ref{PROP: co sie dzieje z gammą przy przeksztalacaniu grafu na TF-izo} all of those graphs are stable Cayley graphs.
\end{proof}

We finish the paper with some open problems which may stimulate further research. 
We have showed that for every finite group $\text{H}$ and $\sigma \in \text{Aut}(\text{H})$ such that $\sigma^2 \equiv Id$ there is a $(\text{H},\sigma)$-graph. This is not always true for graphs $\Gamma$ such that the group $\text{Aut}^{\pi}(\Gamma)$ acts transitively on vertices. Therefore one can ask the following
\begin{que}
    For which $\text{H}$ and $\sigma \in \text{Aut}(\text{H})$ such that $\sigma^2 \equiv Id$, there exists a $(\text{H},\sigma)$-graph $\Gamma$ such that $\text{Aut}^{\pi}(\Gamma)$ acts transitively on vertices of $\Gamma$?
\end{que}
In Section \ref{SECTION: Further notes on nonisomorphic graphs which are TF-isomorphic to a given graph} we managed to reduce Question \ref{QUE: czy zawsze Aut > TF-iso graphs?} to a group theoretic one

\begin{que}
Let $P_2$ be a finite $2$-group and $\sigma \in \text{Aut}(\text{P}_2)$ be an automorphism such that $\sigma^2 \equiv Id$. Is it true that $|\text{Fix}(\sigma)| \geq \mathcal{G}_{\text{TF}(\text{P}_2,\sigma)}$?
\end{que}
Here $\mathcal{G}_{\text{TF}(P_2,\sigma)}$  denotes the number of conjugacy classes of a group $P_2 \rtimes_{\theta} \mathbb{Z}_2$ which do not belong to $P_2 \times \{e\}$ and are of order two. The semidirect product is defined by $\theta(x) \equiv \sigma$.


\section*{Acknowledgments}

I would like to express my gratitude to Dorde Mitrovic for providing me literature connected to the subject. Prior to that, I based my research only on my results. I am grateful to Tomasz Kowalczyk for asking a question which eventually became Theorem \ref{konstrukcja (H,sigma) grafow}, also  for helping me to write down my results in a more professional manner. I shall also thank Jakub Gaj for translating the paper \cite{zbMATH03291926}. It became crucial to the construction in the Theorem \ref{THM: konstrukcja grafu z wieloma TF-izomorficznymi - GRR dla Z_2 fo k}.


\addtolength{\leftmargin}{0cm}
\setlength{\itemindent}{0cm}
\bibliographystyle{plain}
\bibliography{refs.bib}




\end{document}